\tikzset{
 mydarrows/.style={scale=0.3,
 baseline=-2pt,
 >={Latex [open,length=4pt]},
 <->
 }
}
\tikzset{
 myarrows/.style={scale=0.3,
 baseline=-2pt,
 >={Latex [open,length=4pt]},
 ->
 }
}
\newcommand\biarrow[1][]{\mathbin{\tikz[mydarrows,#1]{\draw[double](0,0)--++(2,0);}}}
\newcommand\strongarrow[1][]{\mathbin{\tikz[myarrows,#1]{\draw[double](0,0)--++(1.5,0);}}}
\newcommand{\Prop}{\mathtt{Prop}}
\newcommand{\Sf}{\mathtt{Sf}}
\newcommand{\LIT}{\mathsf{Lit}}
\newcommand{\Var}{\mathsf{Var}}
\newcommand{\QBel}{\mathsf{QBel}}
\renewcommand{\P}{\mathcal{P}}
\newcommand{\BD}{\mathsf{BD}}
\newcommand{\LBD}{\mathscr{L}_\mathsf{BD}}
\newcommand{\PrG}{\mathtt{P}}
\newcommand{\Be}{\mathtt{B}}
\newcommand{\EG}{\mathtt{E}_\mathsf{G}}
\newcommand{\weakrightarrow}{\rightarrowtriangle}
\newcommand{\weakleftrightarrow}{\leftrightarrowtriangle}
\newcommand{\weakcoimplies}{\multimap}
\newcommand{\coimplies}{\Yleft}
\newcommand{\Gsquare}{\mathsf{G}^2}
\newcommand{\Gsquareorder}{\mathsf{G}^2(\rightarrow,\coimplies)}
\newcommand{\GsquareNelson}{\mathsf{G}^2(\weakrightarrow,\weakcoimplies)}
\newcommand{\LGsquareNelson}{\mathscr{L}_{\mathsf{G}^2(\weakrightarrow,\weakcoimplies)}}
\newcommand{\LGsquareorder}{\mathscr{L}_{\mathsf{G}^2(\rightarrow,\coimplies)}}
\newcommand{\LbiG}{\mathscr{L}_\biG}
\newcommand{\HGsquareorder}{\mathcal{H}\mathsf{G}^2(\rightarrow,\coimplies)}
\newcommand{\HGsquareNelson}{\mathcal{H}\mathsf{G}^2(\weakrightarrow,\weakcoimplies)}
\newcommand{\MCB}{\mathsf{MCB}}
\newcommand{\NMCB}{\mathsf{NMCB}}
\newcommand{\CPL}{\mathsf{CPL}}
\newcommand{\LCPL}{\mathscr{L}_\CPL}
\newcommand{\LQP}{\mathscr{L}_\QP}
\newcommand{\LMCB}{\mathscr{L}_\MCB}
\newcommand{\LNMCB}{\mathscr{L}_\NMCB}
\newcommand{\HHB}{\mathcal{H}\mathsf{HB}}
\newcommand{\HB}{\mathsf{HB}}
\newcommand{\Rfde}{\mathbf{R}_\mathsf{fde}}
\newcommand{\HMCB}{\mathcal{H}\mathsf{MCB}}
\newcommand{\HNMCB}{\mathcal{H}\mathsf{NMCB}}
\newcommand{\toporder}{\top_{\mathbf{1}}}
\newcommand{\botorder}{\bot_{\mathbf{0}}}
\newcommand{\topNelson}{\top_{\mathsf{N}}}
\newcommand{\botNelson}{\bot_{\mathsf{N}}}
\newcommand{\Crtn}{\mathtt{C}}
\newcommand{\QPG}{\mathsf{QPG}}
\newcommand{\QG}{\mathsf{QG}}
\newcommand{\LQG}{\mathscr{L}_\QG}
\newcommand{\HQPG}{\mathcal{H}\QPG}
\newcommand{\HQG}{\mathcal{H}\QG}
\newcommand{\HQP}{\mathcal{H}\QP}
\newcommand{\QP}{\mathsf{QP}}
\newcommand{\SIF}{\mathsf{SIF}}
\newcommand{\biG}{\mathsf{biG}}
\newcommand{\Gtriangle}{\mathsf{G}\triangle}
\newcommand{\HbiG}{\mathcal{H}\mathsf{biG}}
\newcommand{\Grightarrow}{\rightarrow_{\mathsf{G}}}
\newcommand{\Gcoimplies}{\coimplies_{\mathsf{G}}}
\newcommand{\triangleorder}{\triangle^{\mathbf{1}}}
\newcommand{\triangleNelson}{\triangle^{\mathsf{N}}}
\newtheorem{theorem}{Theorem}[section]
\newtheorem{proposition}[theorem]{Proposition}
\newtheorem{corollary}[theorem]{Corollary}
\newtheorem{lemma}[theorem]{Lemma}
\theoremstyle{definition}
\newtheorem{definition}[theorem]{Definition}
\theoremstyle{remark}
\newtheorem{remark}{Remark}[section]
\newtheorem{example}{Example}[section]
\newtheorem{convention}{Convention}[section]
\newtheorem*{convention*}{Convention}
\providecommand{\keywords}[1]
{
 \small	
 \textbf{\textit{Keywords: }} #1
}
\date{}
\begin{document}
\title{Qualitative reasoning in a~two-layered framework\thanks{This is a~postprint version of the following paper --- \href{https://doi.org/10.1016/j.ijar.2022.12.011}{doi: 10.1016/j.ijar.2022.12.011}.}}
\author[1]{Marta B\'{\i}lkov\'{a}}
\author[2]{Sabine Frittella}
\author[2]{Daniil Kozhemiachenko}
\author[3]{Ondrej Majer}
\affil[1]{The Czech Academy of Sciences, Institute of Computer Science, Czech Republic}
\affil[2]{INSA Centre Val de Loire, Univ.\ Orl\'{e}ans, LIFO EA 4022, France}
\affil[3]{The Czech Academy of Sciences, Institute of Philosophy, Czech Republic}
\maketitle
\begin{abstract}
The reasoning with qualitative uncertainty measures involves comparative statements about events in terms of their likeliness without ne\-ces\-sarily assigning an exact numerical value to these events.

The paper is divided into two parts. In the first part, we formalise reasoning with the qualitative counterparts of capacities, belief functions, and probabilities, within the framework of two-layered logics. Namely, we provide two-layered logics built over the classical propositional logic using a~unary belief modality $\Be$ that connects the inner layer to the outer one where the reasoning is formalised by means of G\"{o}del logic. We design their Hilbert-style axiomatisations and prove their completeness. In the second part, we discuss the paraconsistent generalisations of the logics for qualitative uncertainty that take into account the case of the available information being contradictory or inconclusive.

\keywords{qualitative probabilities; comparative belief; two-layered modal logics; G\"{o}del logic; Belnap--Dunn logic; paraconsistent logics}
\end{abstract}
\section{Introduction}
The main objective of this paper is to provide a~logical framework for reasoning with uncertain information, where uncertainty in events is expressed qualitatively rather than quantitatively. This approach has been explored in the case of (qualitative counterparts of) probability, but much less so in the case of weaker notions of uncertainty, creating a~gap we aim to fill with this paper. The most general kind of uncertainty measure usually considered in the literature is \emph{capacity} ---~a function $\mu:2^W\rightarrow[0,1]$ on the powerset of a~set of events $W\neq\varnothing$ which is monotone w.r.t.\ $\subseteq$, non-trivial ($\mu(W)>\mu(\varnothing)$), and normalised: $\mu(\varnothing)=0$, $\mu(W)=1$ (cf.~\cite{Grabisch2009,Zhou2013}). We slightly generalise this notion by dropping the last condition and call the resulting function an \textit{uncertainty measure}.

The difference between normalised and non-normalised measures is related to the difference between closed and open-world assumptions and to the difference between normal and non-normal modal logics. In other words, an agent may not necessarily believe that they have access to the whole sample space. Thus, even though, a classical tautology $\phi$ is true in all states accessible by an agent, they are still not convinced therein to assign $1$ as $\phi$'s degree of certainty.

We will employ a~logical formalism consisting of two separate layers: the inner layer formalises reasoning about events, while the outer layer formalises reasoning about beliefs in the events, the uncertainty measure of choice acting as the interpretation of the belief modality acting on formulas describing events. We will use G\"{o}del many-valued (fuzzy) logic to formalise the qualitative reasoning about the resulting beliefs on the outer layer. Apart from reasoning about events classically, we also wish to consider cases where the actual information about events available to an agent is faulty in the sense that it can be incomplete or even contradictory. Thus, in the apparatus of two-layered logics, we will employ not only classical logic but also \emph{fuzzy} and \emph{paraconsistent} logics to account for this possibility and reason with non-classical information in a~\emph{non-trivial} way.

\paragraph{Weaker uncertainty measures} The most prominent examples of uncertainty measures are probability functions, however, they might be considered too strong to formalise people's beliefs.
Indeed, probabilities assume that the agents are able to express their certainty in a~statement with some number from $[0,1]$ and then conduct arithmetical operations with these numbers to obtain the probability assignments of complex events: add or subtract them, multiply them by some given constants, etc. This, however, is not always the case. Indeed, outside of scientific contexts (such as conducting medical tests, forecasting the weather, etc.), one can rarely give a~number that corresponds to their certainty in a~given statement. Furthermore, a~layperson might not necessarily know how to obtain values of complex events, although they are usually able to compare their certainty in those.

In~\cite{BilkovaFrittellaMajerNazari2020}, this aspect was partially tackled with a~logic of monotone coherent belief where not only the event description but also the reasoning was done in the language of the Belnap--Dunn logic ($\BD$). $\BD$, however, lacks the capacity to express statements such as ‘I think that the rain tomorrow is more likely than hailstorm’. Here the agent is a~layperson and does not know the precise probabilities of any event. Neither are they likely to say something like ‘I am $73\%$ sure that it is going to rain tomorrow’. On the other hand, one often can \emph{compare} their degree of certainty in two given statements or at least be able to say something like ‘I don't know whether rain or hailstorm is more likely tomorrow’.

For the classical case, one of such qualitative counterparts of uncertainty measures is \emph{qualitative probability}. One of the formal systems axiomatising qualitative probability is the modal logic proposed by Peter G\"{a}rdenfors in~\cite{Gardenfors1975}. It uses a~binary modality $\lesssim$ such that $\phi\lesssim \psi$ is interpreted as ‘$\phi$ is at most as likely as~$\psi$’. Furthermore, G\"{a}rdenfors' calculus allows for nesting of $\lesssim$ which, in turn, leads to formulas of the form
\[(\tau\lesssim\phi)\lesssim(\chi\lesssim\psi)\]
to be read as ‘it is at most as likely that $\tau$ is at most as likely as $\phi$ as that $\chi$ is at most as likely as~$\psi$’. These statements, however, are quite cumbersome and rarely used in natural language reasoning when people are comparing their certainty in different statements.

This is why, in this paper, we will present a~two-layered logic based on G\"{o}del logic expanded with co-implication $\biG$ that formalises qualitative reasoning with uncertainty measures. We will also show how to extend it with additional axioms corresponding to the qualitative counterparts of capacities, belief functions, and probabilities. For the last case, we will also show that our logic, in fact, interprets a~certain fragment of the G\"{a}rdenfors' calculus where the $\lesssim$ does not nest.

\paragraph{Two-layered logics for uncertainty} Two-layered logics consist of two languages: the one that describes the events such as ‘if it rains, then there should be puddles on the road’ (inner layer), and the one wherein the reasoning about beliefs in these events occurs. The uncertainty measure then acts as a~non-nesting modality $\mathtt{M}$. Thus, the formulas of the outer language are built from the atoms of the form $\mathtt{M}\phi$ with $\phi$ being an inner-layer formula.

One of the first formalisations of a~logic for reasoning about classical probability is provided in~\cite{FaginHalpernMegiddo1990,FaginHalpern1991ComputationalIntelligence}. There, the events are described using classical propositional logic. Then, the reasoning happens with so-called \emph{weight formulas} composed of the following constructions
\[\sum\limits_{i=1}^{n}a_i\cdot\mathtt{w}(\phi_i)\geqslant c.\]
Here, $a_i,c\in\mathbb{Z}$, $\phi_i$ is a~propositional formula, and $\mathtt{w}$ a~\emph{weight} modality denoting the \emph{probability} of the formula.

Although, the formalisation in~\cite{FaginHalpern1991ComputationalIntelligence} is not a~two-layered logic \emph{per se}, it can be equivalently translated into one. This equivalence is demonstrated in~\cite{BaldiCintulaNoguera2020}. There, the outer layer is expressed in the language of \L{}ukasiewicz logic, as was first proposed in~\cite{HajekGodoEsteva1995}. This removes the need of operating with the Boolean combinations of inequalities directly and allows for a~finite and straightforward axiomatisation of the whole framework.

The use of many-valued logics (we refer the reader to~\cite{Hajek1998} and~\cite{CintulaHajekNoguera2011HandbookofMFL1,CintulaHajekNoguera2011HandbookofMFL2,CintulaFermuellerNoguera2015HandbookofMFL3} for basic exposition to and further details on many-valued and fuzzy logics) on the outer layer ties into the existing tradition of using these for reasoning about vagueness~\cite{Smith2008}, beliefs~\cite{GodoHajekEsteva2003,TouaziCayrolDubois2015}, and uncertainty~\cite{DuboisPradeSchockaert2017}. Most of the previous work, however, employs logics that are capable of expressing a~certain amount of arithmetic (usually, addition and subtraction). 
As this paper concentrates on the qualitative notion of uncertainty, we will be using G\"{o}del logic and its expansions that can only \emph{compare values} of formulas but not conduct arithmetic operations with them. Still, we will show that this is enough to have a~logic that can formalise reasoning about probabilities \emph{qualitatively}.

\paragraph{Two-dimensional treatment of inconsistent information}
If we recall the understanding of uncertainty, outlined in the first paragraphs of this section, it is clear that classical logic (unsurprisingly) is not well suited to reasoning with non-classical information. Indeed, an arbitrary statement follows classically from a~contradiction. This, however, is counter-intuitive for there may be no connection between those. Likewise, even if one does not have any evidence at all whether $\phi$ is true or false, $\phi\vee{\sim}\phi$ is valid, whence, always true. In other words, classical logic cannot reason with non-classical, and most importantly, contradictory information \emph{non-trivially}.

Thus, the goal of our project is to develop \emph{paraconsistent} two-dimensional logics and apply them to the reasoning about uncertainty. Namely, we propose logics built upon or inspired by Belnap and Dunn's ‘useful four-valued logic’ (alias, Belnap--Dunn logic, $\BD$ or first-degree entailment)~\cite{Dunn1976,Belnap2019,Belnap1977fourvalued}.

The main idea of $\BD$ is to treat positive and negative information regarding a~given statement\footnote{Originally, the values denote what a~computer or a~database might have been told regarding a~statement.} $\phi$ independently. This gives rise to four values comprising the lattice $\mathbf{4}$ depicted on fig.~\ref{fig:BDsquare}. There, $t$ stands for ‘the information only supports the truth of $\phi$’; $f$ for ‘the information only supports the falsity of $\phi$’; $b$ for ‘the information supports both the truth and falsity of $\phi$’ (i.e., the information is contradictory); $n$ for ‘there is no information regarding $\phi$’. The upwards order $x\leq_\mathbf{4}y$ is construed as ‘$x$ is at least as true and at most as false as $y$’.

Should we wish to account for the possibility of the non-classical information to be graded, the original lattice $\mathbf{4}$ is extended to $[0,1]^{\Join}$ (fig.~\ref{fig:NSProb}). Two-layered logics axiomatising paraconsistent probabilities~\cite{KleinMajerRad2021}, as well as some other uncertainty measures including belief functions, were presented in~\cite{BilkovaFrittellaMajerNazari2020,BilkovaFrittellaKozhemiachenkoMajerNazari2023APAL}. There, the events are described by $\BD$ propositions which represents the possibility of our sources to provide non-classical information. The reasoning on the outer layer is formalised using a~paraconsistent expansion of \L{}ukasiewicz logic with the Belnap--Dunn negation $\neg$, and the uncertainty measure, in its turn, is a~non-standard probability or belief function.
\begin{figure}[h]
\begin{minipage}[h]{0.45\linewidth}
\begin{center}
\begin{tikzpicture}[>=stealth,relative]
\node (U1) at (0,-1.2) {$f$};
\node (U2) at (-1.2,0) {$n$};
\node (U3) at (1.2,0) {$b$};
\node (U4) at (0,1.2) {$t$};
\path[-,draw] (U1) to (U2);
\path[-,draw] (U1) to (U3);
\path[-,draw] (U2) to (U4);
\path[-,draw] (U3) to (U4);
\end{tikzpicture}
\caption{De Morgan lattice $\mathbf{4}$}
\label{fig:BDsquare}
\end{center}
\end{minipage}
\hfill
\begin{minipage}[h]{0.45\linewidth}
\begin{center}
\begin{tikzpicture}[-,>=stealth,shorten >=0.5pt,auto,node distance=1.2cm,thin,
	main node/.style={circle,draw,font=\sffamily\normalsize},]
\node[main node][label=left:{$(0,0)$}] (1) {};
\node[main node][label={$(1,0)$}] (2) [above right of=1] {};
\node[main node][label=below:{$(0,1)$}] (3) [below right of=1] {};
\node[main node][label=right:{$(1,1)$}] (4) [above right of=3] {};
	
\path[every node/.style={font=\sffamily\small}]
(1) edge (2)
edge (3)
(2) edge (4)
(3) edge (4);
\path[dotted]
(2) edge (3)
(1) edge (4);
\end{tikzpicture}
\end{center}
\caption{$[0,1]^{\Join}$ --- continuous extension of~$\mathbf{4}$. $(x,y)\!\leq_{[0,1]^{\Join}}\!(x',y')$ iff $x\!\leq\!x'$ and $y\!\geq\!y'$}
\label{fig:NSProb}
\end{minipage}
\end{figure}
\paragraph{Plan of the paper}
The remainder of the paper is structured as follows. In Section~\ref{sec:preliminaries}, we formulate an expansion of G\"{o}del logic with the coimplication $\coimplies$ dubbed $\biG$. In particular, we define its semantics and provide axiomatisation; we also recall that $\coimplies$ and the Baaz delta operator $\triangle$ are interdefinable and show how to axiomatise $\biG$ with $\triangle$ instead of $\coimplies$.

In Section~\ref{sec:QPC}, we show how to axiomatise reasoning with different qualitative uncertainty measures. We begin with the logic we dub $\QG$ that formalises the reasoning with arbitrary uncertainty measures using G\"{o}del logic. We then consider axiomatic extensions of $\QG$ that formalise qualitative reasoning with capacities and belief functions. Finally, we provide $\QPG$ --- the two-layered logic of qualitative probabilities. We also recall G\"{a}rdenfors' logic of qualitative probability $\QP$ and establish a~faithful embedding of ‘simple inequality formulas’ (i.e., formulas constructed as Boolean combinations of $\chi\lesssim\chi'$ formulas of $\QP$ without nesting of $\lesssim$) into $\QPG$.

In Section~\ref{sec:MCB}, we recall the Belnap--Dunn logic $\BD$ and the paraconsistent expansions of $\biG$. We use them to develop two paraconsistent expansions of $\QG$. We define semantics, construct calculi and prove their completeness. We use these logics to formalise comparisons of certainty in (potentially) contradictory or incomplete information.

Finally, in Section~\ref{sec:conclusion}, we recapitulate the results presented in the paper and discuss future work.
\section{Preliminaries: bi-G\"{o}del logic\label{sec:preliminaries}}
G\"{o}del logic is an infinite-valued propositional logic, with its standard algebraic semantics being based on the full $[0,1]$ interval, where $1$ is the designated value. The truth values are (densely) ordered, and, together with the semantics of G\"{o}del implication, this makes G\"{o}del logic suitable for formalising comparisons. G\"{o}del logic is one of the three basic t-norm-based fuzzy logics, and it is also closely related to intuitionistic logic: it is the logic of linearly ordered Heyting algebras and can also be characterised as the logic of linearly ordered intuitionistic Kripke structures, and axiomatized by extending the intuitionistic logic with the axiom of prelinearity. A~more detailed exposition of G\"{o}del logics can be found e.g., in~\cite{CintulaHajekNoguera2011HandbookofMFL2}.

In this subsection, we formulate G\"{o}del logic expanded with a~co-implication connective $\coimplies$ and refer to it as $\biG$ (bi-G\"{o}del logic or symmetric G\"{o}del logic in the terminology of~\cite{GrigoliaKiseliovaOdisharia2016}), as it can naturally be obtained by extending the bi-intuitionistic logic with the axioms of prelinearity. Note that instead of $\coimplies$, one could add the Baaz delta operator $\triangle$ and obtain the logic that is expressively equivalent to $\biG$ (cf.~Remark~\ref{rem:trianglecoimplies}). In the following definition of bi-G\"{o}del algebras, we leave both $\triangle$ and $\coimplies$ as it will facilitate the formalisation of comparative belief statements.
\begin{definition}\label{def:bi-G_algebra}
The bi-G\"{o}del algebra $[0,1]_{\mathsf{G}}=\langle[0,1],0,1,\wedge_\mathsf{G},\vee_\mathsf{G},\rightarrow_{\mathsf{G}},\coimplies,\sim_\mathsf{G},\triangle_\mathsf{G}\rangle$ is defined as follows: for all $a,b\in[0,1]$, $\wedge_\mathsf{G}$ and $\vee_\mathsf{G}$ are given by $a\wedge_\mathsf{G}b\coloneqq\min(a,b)$, $a\vee_\mathsf{G}b\coloneqq\max(a,b)$. The remaining operations are defined below:
\begin{align*}
a\rightarrow_\mathsf{G}b&=
\begin{cases}
1,\text{ if }a\leq b\\
b\text{ else}
\end{cases}
&
a\coimplies_\mathsf{G}b&=
\begin{cases}
0,\text{ if }a\leq b\\
a\text{ else}
\end{cases}
&
{\sim_\mathsf{G}}a&=
\begin{cases}
0,\text{ if }a>0\\
1\text{ else}
\end{cases}
&
\triangle_\mathsf{G}a&=
\begin{cases}
0,\text{ if }a<1\\
1\text{ else}
\end{cases}
\end{align*}
\end{definition}
\begin{definition}[Language and semantics of $\biG$]\label{def:biGlogic}
We set $\Prop$ to be a~countable set of propositional variables and consider the following language $\LbiG$.
\[\LbiG:~\phi\coloneqq p\in\Prop\mid{\sim}\phi\mid(\phi\wedge\phi)\mid(\phi\vee\phi)\mid(\phi\rightarrow\phi)\mid(\phi\coimplies\phi)\]
We let $e:\Prop\rightarrow[0,1]$. Using bi-G\"{o}del operations from Definition~\ref{def:bi-G_algebra}, $e$ is extended to complex formulas in the expected manner: $e(\phi\circ\phi')\!=\!e(\phi)\circ_\mathsf{G}e(\phi')$. We say that $\phi$ is \emph{valid} iff $e(\phi)\!=\!1$ for any~$e$. Furthermore, we define the entailment:
$$\Gamma\models_{\biG}\chi\text{ iff }\inf\{e(\phi):\phi\in\Gamma\}\leq e(\chi)\text{ for any }e$$
\end{definition}
\begin{convention}[Notational conventions]
We will further use the following shorthands.
\begin{align*}
\top&\coloneqq p\rightarrow p&\bot&\coloneqq p\coimplies p
\end{align*}
Note that
\begin{align*}
e(\top)&=1&e(\bot)&=0
\end{align*}
Finally, we write $e[\Gamma]=x$ iff $\inf\{e(\phi):\phi\in\Gamma\}=x$.
\end{convention}
\begin{remark}\label{rem:trianglecoimplies}
Observe that $\coimplies$ and $\triangle$ are interdefinable:
\begin{align*}
\triangle\phi&\coloneqq\top\coimplies(\top\coimplies\phi)&\phi\coimplies\phi'&\coloneqq\phi\wedge{\sim}\triangle(\phi\rightarrow\phi')
\end{align*}
\end{remark}

G\"{o}del logic can be interpreted as a~logic of comparative truth, for the truth of a~given formula depends only on the relative order of the values of its variables, not the values themselves. It is best seen in the semantics $\rightarrow$.

Indeed, $\phi\rightarrow\phi'$ is true (has value $1$) iff the value of $\phi$ is less or equal to that of $\phi'$. In simpler words, for an implication to be true, the value cannot \emph{decrease} from the antecedent to the consequent. On the other hand, if $\phi\rightarrow\phi'$ is not true, then we can safely assume that its truth degree is not smaller than that of $\phi'$. In what follows, we will use the defined connective $\triangle$ rather than $\coimplies$ to express statements of the form ‘I am more certain in $\phi$ than in $\phi'$’ (cf.~Examples~\ref{example:beliefcomparison} and~\ref{example:naturallanguage} for more details) because it allows for more concise and straightforward presentation. On the other hand, the use of $\coimplies$ in the calculus simplifies the axiomatisations of the paraconsistent expansions of $\biG$.

Let us now present the Hilbert-style calculus for $\biG$ that we dub $\HbiG$.
\begin{definition}[$\HbiG$]\label{def:HHB}
The calculus has the following axiom schemas and rules (for any $\phi$, $\chi$,~$\psi$):
\begin{enumerate}
\item $(\phi\rightarrow\chi)\rightarrow((\chi\rightarrow\psi)\rightarrow(\phi\rightarrow\psi))$
\item $\phi\rightarrow(\phi\vee\chi)$; $\chi\rightarrow(\phi\vee\chi)$; $(\phi\rightarrow\psi)\rightarrow((\chi\rightarrow\psi)\rightarrow((\phi\vee\chi)\rightarrow\psi))$
\item $(\phi\wedge\chi)\rightarrow\phi$; $(\phi\wedge\chi)\rightarrow\chi$; $(\phi\rightarrow\chi)\rightarrow((\phi\rightarrow\psi)\rightarrow(\phi\rightarrow(\chi\wedge\psi)))$
\item $(\phi\rightarrow(\chi\rightarrow\psi))\rightarrow((\phi\wedge\chi)\rightarrow\psi)$; $((\phi\wedge\chi)\rightarrow\psi)\rightarrow(\phi\rightarrow(\chi\rightarrow\psi))$
\item $(\phi\rightarrow\chi)\rightarrow({\sim}\chi\rightarrow{\sim}\phi)$
\item $(\phi\coimplies\chi)\rightarrow(\top\coimplies(\phi\rightarrow\chi))$; ${\sim}(\phi\coimplies\chi)\rightarrow(\phi\rightarrow\chi)$
\item $\phi\rightarrow(\chi\vee(\phi\coimplies\chi))$; $((\phi\coimplies\chi)\coimplies\psi)\rightarrow(\phi\coimplies(\chi\vee\psi))$
\item $(\phi\rightarrow\chi)\vee(\chi\rightarrow\phi)$; $\top\coimplies((\phi\coimplies\chi)\wedge(\chi\coimplies\phi))$
\item[MP] $\dfrac{\phi\quad\phi\rightarrow\chi}{\chi}$
\item[nec] $\dfrac{\vdash\phi}{\vdash{\sim}(\top\coimplies\phi)}$
\end{enumerate}
We say that $\phi$ is \emph{derived} from $\Gamma$ iff there exists a~finite sequence of formulas $\phi_1$, \ldots, $\phi_n=\phi$ each of which is either an instantiation of an axiom schema, a~member of $\Gamma$, or obtained from previous ones by a~rule. If $\Gamma=\varnothing$, we say that $\phi$ is \emph{proved}.
\end{definition}

Observe that in the definition above, the first five groups of axioms formalise intuitionistic logic, adding axioms 6 and 7 produces the axiomatisation of Heyting--Brouwer (bi-intuitionistic) logic~\cite{Rauszer1977,Rauszer1980phd}. Finally, axiom 8 stands for the linearity conditions for $\rightarrow$ and $\coimplies$.
\begin{remark}
The axiomatisation of $\Gtriangle$ (G\"{o}del logic with Baaz delta) can be easily obtained from Definition~\ref{def:HHB}. Instead of the axioms and rules with $\coimplies$, one should add the following axioms for $\triangle$ from~\cite{Baaz1996}
\begin{align*}
\triangle\phi\vee{\sim}\triangle\phi&&\triangle(\phi\rightarrow\chi)\rightarrow(\triangle\phi\rightarrow\triangle\chi)&&\triangle(\phi\vee\chi)\rightarrow(\triangle\phi\vee\triangle\chi)&&\triangle\phi\rightarrow\phi&&\triangle\phi\rightarrow\triangle\triangle\phi
\end{align*}
as well as the $\triangle$ necessitation rule $\dfrac{\vdash\phi}{\vdash\triangle\phi}$.
\end{remark}
\section{Qualitative reasoning about uncertainty\label{sec:QPC}}
In many contexts, an agent may not necessarily be able to give a~precise number that corresponds to their degree of certainty in a~given proposition. Indeed, ‘we are 70\% certain that it is going to rain tomorrow’ is relatively natural when giving a~weather forecast based on some evidence as well as on previous statistical data. On the other hand, if two different people (say, Paula and Quinn) claim that a~recently found dog belongs to themselves only, a~statement such as ‘I~am 65\% confident that the dog is Paula's’ sounds bizarre.

Nevertheless, in such contexts, there is a~chance (if Paula, Quinn, or both present some relevant evidence) for a~reasoning agent to compare their claims and pick a~more compelling one. In other words, even though, we are not really able to measure our certainty in a~given statement \emph{numerically}, we can compare evidence for it with evidence for another one.

Qualitative probability was extensively studied, starting already in the work of de Finetti~\cite{deFinetti1937}. Later, Kraft, Pratt and Seidenberg in their seminal paper~\cite{KraftPrattSeidenberg1959} axiomatised the qualitative characterisation of probability measures in terms of qualitative orderings of sets of events. More formally, a~measure $\mu$ on a~set of events $\P(W)$ agrees with a~qualitative ordering $\preccurlyeq$ on the same set iff
\[\forall X,Y\subseteq W:X\preccurlyeq Y\Leftrightarrow\mu(X)\leq\mu(Y)\]

The qualitative counterparts of capacities and belief functions were studied as well (see e.g., \cite{WongYaoBollmannBurger1991,WongYaoBollmann1992}). We summarise the results considering various uncertainty measures in the following theorem.
\begin{theorem}[Qualitative characterisations of uncertainty measures]\label{theorem:qualitativecharacterisations}
Let $W\neq\varnothing$ and let further $\preccurlyeq$ be a~linear preorder on $2^W$. Consider the following conditions on $\preccurlyeq$ for all $X,Y,Z\subseteq W$.
\begin{description}
\item[Q1] $\varnothing\preccurlyeq X\preccurlyeq W$.
\item[Q2] $\varnothing\prec W$.
\item[Q3] If $X\subseteq Y$, then $X\preccurlyeq Y$.
\item[PM] If $X\subsetneq Y$, $X\prec Y$, and $Y\cap Z=\varnothing$, then $X\cup Z\prec Y\cup Z$.
\item[$\mathbf{KPS}_m$] For any $m\in\mathbb{N}$ and all $X_i,Y_i\subseteq W$ ($i\in\{0,\ldots,m\}$), it holds that if $X_j\preccurlyeq Y_j$ for all $j<m$ and if any $w\in W$ belongs to as many $X_i$'s as $Y_i$'s, then $X_m\succcurlyeq Y_m$.
\end{description}
Then, it holds that
\begin{enumerate}
\item the counterparts of $\preccurlyeq$ are \emph{capacities} iff $\preccurlyeq$ satisfies $\mathbf{Q1}$--$\mathbf{Q3}$;
\item the counterparts of $\preccurlyeq$ are \emph{belief functions} iff $\preccurlyeq$ satisfies $\mathbf{Q1}$--$\mathbf{Q3}$ and $\mathbf{PM}$;
\item the counterparts of $\preccurlyeq$ are \emph{probability measures} iff $\preccurlyeq$ satisfies $\mathbf{Q1}$--$\mathbf{Q3}$ and $\mathbf{KPS}_m$. 
\end{enumerate}
\end{theorem}
The list in Theorem~\ref{theorem:qualitativecharacterisations} is compiled from different sources, whence its obvious redundancy: $\mathbf{Q3}$ entails $\mathbf{Q1}$, and moreover, $\mathbf{Q1}$, $\mathbf{Q2}$, and $\mathbf{KPS}_m$ entail $\mathbf{Q3}$. We decided to leave the redundant conditions to make the presentation more uniform.

We will also consider functions, which correspond to non-normalised capacities, i.e., they only satisfy monotonicity and nontriviality. We call them \emph{uncertainty measures}. Note that just as capacities, uncertainty measures are too counterparts of $\preccurlyeq$ satisfying $\mathbf{Q1}$--$\mathbf{Q3}$. 


Moreover, since any uncertainty measure $\mu:2^W\rightarrow[0,1]$ gives rise to a~linear preorder $\preccurlyeq_\mu$ on $2^W$, the original conditions $\mathbf{PM}$ (‘partial monotonicity’, in the terminology of~\cite{WongYaoBollmannBurger1991,WongYaoBollmann1992}) and $\mathbf{KPS}_m$ (Kraft--Pratt--Seidenberg conditions) can be reformulated in terms of measures. For any uncertainty measure $\mu$, its qualitative counterpart is an ordering corresponding to a~belief function iff $\mu\mathbf{PM}$ holds and an ordering corresponding to a~probability measure iff $\mu\mathsf{KPS}_m$ holds.
\begin{description}
\item[$\mu\mathbf{PM}$:]\label{item:muPM} If $\mu(X)<\mu(Y)$, $X\subsetneq Y$, and $Y\cap Z=\varnothing$, then $\mu(X\cup Z)<\mu(Y\cup Z)$.
\item[$\mu\mathsf{KPS}_m$:]\label{item:muKPSm} For any $m\in\mathbb{N}$ and all $X_i,Y_i\subseteq W$ ($i\in\{0,\ldots,m\}$), it holds that if $\mu(X_j)\leq\mu(Y_j)$ for all $j<m$ and if any $w\in W$ belongs to as many $X_i$'s as $Y_i$'s, then $\mu(X_m)\geq\mu(Y_m)$.
\end{description}
In what follows, we present two-layered logics that formalise qualitative reasoning with relative likelihood orderings corresponding to uncertainty measures, capacities, belief functions and probabilities.
\subsection{Qualitative uncertainty in a~two-layered framework}
In this section, we formulate two-layered modal logic $\QG=\langle\CPL,\{\Be\},\biG\rangle$. Here, we represent the events with the classical propositional logic but reason with the beliefs concerning these events using $\biG$~--- G\"{o}del logic with co-implication. This will allow us to compare likelihoods of different events. The modality connecting two layers is denoted with $\Be$. Formulas of the form $\Be\phi$ (with $\phi\in\LCPL$\footnote{We use a~minimalistic $\{{\sim},\wedge\}$ language with all other connectives being introduced via definitions.}) are interpreted as ‘the agent believes in $\phi$’ with its value being understood as the degree of truth (cf.~\cite{CintulaFermuellerNoguera2021SEP} and the literature referenced therein for a~more detailed discussion of truth degrees). Recall, however, that in G\"{o}del logic a~formula is (fully) true (i.e., has value $1$) in virtue of the \emph{relative order} of the values of its variables, not their \emph{numerical values}, and that this order corresponds to the G\"{o}del implication.

Other connectives can be understood in an expected manner: $\wedge$ stands for ‘and’; $\vee$ for ‘or’; $\coimplies$ for ‘excludes’\footnote{Cf.~\cite{Wansing2008} for further details on the interpretation of $\coimplies$ as ‘excludes’.}; $\triangle$ for ‘it is (completely) true that’; $\sim$ for ‘it is (completely) false that’. We argue that the G\"{o}del implication is the most natural implication for the qualitative and comparative contexts as opposed to the \L{}ukasiewicz implication that is used in \emph{quantitative} representations of uncertainty (cf., e.g.~\cite{HajekGodoEsteva1995}) for two reasons. First, it residuates $\wedge$ which is the natural comparative conjunction. Second, even though, the \L{}ukasiewicz implication respects the order on $[0,1]$ as well, it defines the truncated sum and subtraction connectives $\oplus$ and $\ominus$ which makes it inherently quantitative. However, in the qualitative context, an agent might not be able to give values to their certainty in given statements, whence they cannot meaningfully add up or subtract these values. On the other hand, the G\"{o}del implication together with the coimplication (or, alternatively, Baaz delta operator) is both sufficient to express the order on $[0,1]$ (both strict and non-strict ones) and does not have arithmetic operations as its by-product. Third, as we have already mentioned, if an implicative formula is not true, one can still safely assume that its truth degree is not lower than that of the succedent. Moreover, the use of the bi-G\"{o}del logic allows for the proof of strong completeness of its two-layered expansions while the two-layered expansions of the \L{}ukasiewicz logic are only weakly complete.

Note that the two-layered nature of $\QG$ prohibits the nesting of $\Be$ modality. In other words, statements such as ‘it is more likely that $\phi$ is more likely than $\chi$ being more likely than $\psi$’ cannot be expressed. We will show, however, that there is a~faithful translation that preserves the validity of propositional combinations of formulas $\chi\!\lesssim\!\chi'$ (with $\chi,\chi'\!\in\!\LCPL$).

The present paper is by no means the first one on the representation of qualitative uncertainty. In particular, the qualitative logic of possibility is introduced in~\cite{FarinasdelCerroHerzig1991} where the authors follow the G\"{a}rdenfors' approach and extend classical propositional logic with axioms of qualitative possibility formulated with $\lesssim$. The logic is then translated into the quantitative possibility logic and shown to preserve validity between quantitative and qualitative models.

Another approach to the logic of qualitative possibility is presented in~\cite[Section~2.9]{Halpern2017}. There, Halpern introduces a~qualitative notion of relative likelihood and shows that every order based on a~possibility measure is in fact a~relative likelihood. In other words, his axiomatization of qualitative possibility gives necessary conditions for the existence of a~compatible possibility measure, he does not discuss the question if these conditions are also necessary. 

An alternative approach to the axiomatisation of qualitative probabilities is presented in~\cite{DelgrandeRenne2015,DelgrandeRenneSack2019}. The authors introduce an additional connective $\oplus$ that defines qualitative probability on sequences of formulas
\begin{align*}
\bigoplus\limits^{n}_{i=1}\phi_i\lesssim\bigoplus\limits^{n}_{i=1}\psi_i&\text{ iff }\sum\limits^{n}_{i=1}\mathtt{p}(\|\phi_i\|)\leq\sum\limits^{n}_{i=1}\mathtt{p}(\|\psi_i\|)\tag{$\mathtt{p}$ is a~probability measure}
\end{align*}
This allows for a~finite axiomatisation in contrast to the G\"{a}rdenfors' $\QP$ (cf.~Definition~\ref{def:HQP}) of the logic as $\oplus$ can be used to express additivity. On the other hand, $\oplus$ makes the logic hybrid rather than purely qualitative. Moreover, the existence of a~quantitative measure is assumed rather than derived from the qualitative relation as it is done both traditionally and in the present paper.

Yet another treatment of qualitative probabilities inspired by~\cite{Savage1972} is presented in~\cite{BaldiHosni2021}. The authors devise a~sequence of finitely axiomatised calculi that approximate qualitative reasoning with probability measures.

One of the most important distinctions of our approach from the ones discussed above is that we use \emph{unary} belief modalities $\Be\phi$ whose values are understood as truth degrees of the~statement ‘agent believes that $\phi$’ while traditionally, a~\emph{binary} modality $\lesssim$ (‘at least as likely as’) is used. At the first glance, the use of a~binary modality seems to be more justified and straightforward when one deals with qualitative or comparative contexts. However, as one can see from Theorem~\ref{theorem:qualitativecharacterisations}, $\lesssim$ cannot distinguish between normalised measures (i.e., the ones where $\mu(\|\bot\|)=0$ and $\mu(\|\top\|)=1$) and the non-normalised ones. In a~sense, binary modalities cannot express statements such as ‘the agent has a~positive belief in $p$’. Later (cf.~Remark~\ref{rem:nonrepresentableproperties}), we will also see some formulas expressing some natural properties of measures that cannot be expressed using~$\lesssim$.

Moreover, we claim that our approach is purely qualitative in contrast to those of Delgrande--Renne--Sack in the following sense. First, we do not \emph{a~priori} assume that the measure $\mu$ on the frame is in fact a~belief function, a~probability, etc.\ but merely that the order $\preccurlyeq$ corresponding to $\mu$ conforms to the needed conditions from Theorem~\ref{theorem:qualitativecharacterisations}. Second, our language does not express the quantitative axioms of any uncertainty measure in contrast to that of~\cite{DelgrandeRenne2015,DelgrandeRenneSack2019}. In this, we follow the approach of G\"{a}rdenfors~\cite{Gardenfors1975}: we begin with frames whereon a~measure is defined and then axiomatise the qualitative conditions corresponding to this measure.

Let us now introduce $\QG$ in a~formal manner. We are first building our qualitative framework correspondingly to generic uncertainty measures. Then, we add conditions characterising qualitative counterparts of stronger measures which are usually discussed in the literature: in particular, capacities, belief functions, and probability functions.
\begin{definition}[Language and semantics of $\QG$]\label{def:QGsemantics}
We let $\phi\in\LCPL$ and define the following grammar.
\[\LQG:~\alpha\coloneqq\Be\phi\mid{\sim}\alpha\mid(\alpha\wedge\alpha')\mid(\alpha\vee\alpha')\mid(\alpha\rightarrow\alpha)\mid(\alpha\coimplies\alpha')\]
An \emph{uncertainty frame} (or simply, a~frame) is a~tuple $\mathbb{F}=\langle W,\mu\rangle$. Here, $W\neq\varnothing$ and $\mu$ is an uncertainty measure on $W$.

A \emph{$\QG$ model} is a~tuple $\mathfrak{M}=\langle W,v,\mu,e\rangle$ with $\langle W,\mu\rangle$ being a~frame and $v:\Prop\rightarrow2^W$ being extended to $\mathfrak{M},x\vDash\phi$ for $x\in W$ and $\phi\in\LCPL$ as follows.
\begin{itemize}
\item $\mathfrak{M},x\vDash p$ iff $x\in v(p)$.
\item $\mathfrak{M},x\vDash{\sim}\phi$ iff $\mathfrak{M},x\nvDash\phi$.
\item $\mathfrak{M},x\vDash\phi\wedge\phi'$ iff $\mathfrak{M},x\vDash\phi$ and $\mathfrak{M},x\vDash\phi'$.
\end{itemize}
$e$ is a~bi-G\"{o}del valuation (cf.~Definition~\ref{def:biGlogic}) s.t.\ $e(\Be\phi)=\mu(\|\phi\|)$ with $\|\phi\|=\{x:\mathfrak{M},x\vDash\phi\}$.

$\QG$ entailment is defined in the same manner as that of $\biG$:
$$\Xi\models_{\QG}\alpha\text{ iff }\inf\{e(\gamma):\gamma\in\Xi\}\leq e(\alpha)\text{ for any }e\text{ induced by an uncertainty measure}$$

Finally, $\alpha\in\LQG$ is valid on $\mathbb{F}$ ($\mathbb{F}\models\alpha$) iff $e(\alpha)=1$ for any $e$ and $v$ defined on~$\mathbb{F}$.
\end{definition}
\begin{remark}\label{rem:QGnoncompositionality}
One can notice that $\Be$ is \emph{non-compositional} in the sense that neither $\Be(t\wedge r)\leftrightarrow(\Be t\wedge\Be r)$, nor $\Be(t\vee r)\leftrightarrow(\Be t\vee\Be r)$ are $\QG$ valid\footnote{In fact, it is easy to see that there is no general definition of $\Be\phi(p_1,\ldots,p_n)$ via $\Be p_i$'s.}. However, it can be argued~\cite{Dubois2008} that belief should not be compositional. In fact, all standard uncertainty measures are non-compositional, so it is expected that belief based on these measures is non-compositional too. It is even more true for the case when the truth value of a~given belief statement is graded.

Indeed, let $t$ stand for ‘the temperature in the cellar is $26^\circ\mathtt{C}$’ and $r$ for ‘it is raining outside right now’. The agent is in the cellar right now but there is no thermometer and no windows either. The agent does not feel very cold or hot, so $t$ seems reasonable (say, $v(\Be t)=0.7$); half an hour ago it was cloudy and wet, so the rain is not at all excluded (say, $v(\Be r)=0.5$). However, $t$ and $r$ are not entirely independent, thus, it is hardly possible to precisely determine the degree of certainty in either $\Be(t\vee r)$ or $\Be(t\wedge r)$.
\end{remark}
Let us provide an example of how to formalise statements concerning comparisons of degrees of certainty.
\begin{example}[Comparing certainty in $\QG$]\label{example:beliefcomparison}
Assume that two people, Paula and Quinn, come to you and say that a~recently found stray dog belongs to them (and not to the other person). Thus, we have two events: $p\wedge{\sim}q$ (the dog belongs to Paula but not to Quinn) and ${\sim}p\wedge q$ (vice versa). Now, assume further that you trust Paula more than you trust Quinn for some reason. Thus, the following statement should be true
\begin{quote}
\textit{I am more certain that the dog belongs to Paula than to Quinn.
}\end{quote}
We formalise it as follows
\begin{align}
\underbrace{\triangle(\Be({\sim}p\wedge q)\rightarrow\Be(p\wedge{\sim}q))}_{L}\wedge\underbrace{{\sim}\triangle(\Be(p\wedge{\sim}q)\rightarrow\Be({\sim}p\wedge q))}_{R}\label{equ:comparisonexample}
\end{align}
$L$ part establishes that your certainty in $p\wedge{\sim}q$ is at least as high as your certainty in ${\sim}p\wedge q$, while $R$ part means that the converse is not the case. But note that $\triangle(\alpha\rightarrow\alpha')\vee{\sim}\triangle(\alpha'\rightarrow\alpha)$ is a~$\biG$ valid formula, whence~\eqref{equ:comparisonexample} can be simplified to
\begin{align}
{\sim}\triangle(\Be(p\wedge{\sim}q)\rightarrow\Be({\sim}p\wedge q))\label{equ:comparisonexamplesimplified}
\end{align}
\end{example}
It is a~useful property of the bi-G\"odel logic that $\phi(p_1,\ldots,p_n)\in\LbiG$ is valid iff it is valid for all valuations that range over $\left\{0,\frac{1}{n+1},\ldots,\frac{n}{n+1},1\right\}$~\cite[Theorem~35]{Haehnle2001HBPL}. We can use this fact to provide a~natural language interpretation of formulas avoiding reference to numerical values altogether.
\begin{example}\label{example:naturallanguage}
Recall \eqref{equ:comparisonexamplesimplified}. Note that there are two $\LQG$ atoms: $\Be(p\wedge{\sim}q)$ and $\Be({\sim}p\wedge q)$. Thus, we need four (numerical) values corresponding to the ordered set: $\left\{0,\frac{1}{3},\frac{2}{3},1\right\}$. We can associate them with the following subjective values: \emph{certainly false}, \emph{unlikely}, \emph{likely}, \emph{certainly true}.

Now, we can have the following assignment using Example~\ref{example:beliefcomparison}: I find it \emph{likely} that the dog belongs to Paula and not Quinn (thus, $e(\Be(p\wedge{\sim}q))=\text{‘likely’}$) and \emph{unlikely} that the dog belongs to Quinn, not Paula (i.e., $e(\Be({\sim}p\wedge q))=\text{‘unlikely’}$). Hence, we conclude that I find that the dog belongs to Paula rather than to Quinn i.e., ${\sim}\triangle(\Be(p\wedge{\sim}q)\rightarrow\Be({\sim}p\wedge q))$, \emph{certainly true}.
\end{example}
We are now ready to present the calculus for $\QG$ and prove its completeness.
\begin{definition}[$\HQG$]\label{def:HQG}
The calculus $\HQG$ has the following axioms and rules.
\begin{description}
\item[$\mathsf{nontriv}$:] ${\sim}\triangle(\Be\phi\rightarrow\Be\chi)$ for any $\phi$ and $\chi$ s.t.\ $\CPL\vdash\phi$ and $\CPL\vdash{\sim}\chi$.
\item[$\mathsf{reg}$:] $\Be\phi\rightarrow\Be\phi'$ with $\CPL\vdash\phi\supset\phi'$\footnote{Note that $\mathbf{K}$ is not valid: $\Be(p\supset\bot)\rightarrow(\Be p\rightarrow\Be\bot)$ is easy to disprove. It is, however, valid \emph{on single-point models}. Moreover, $\triangle(\Be\bot\rightarrow\Be\phi)$ which corresponds to the second condition on $\mu$ is provable in $\HbiG$ from $\mathsf{reg}$.};
\item[$\biG$:] instantiations of $\HbiG$ axioms and rules with $\LQG$ formulas. 
\end{description}
\end{definition}
\begin{theorem}[Completeness of $\HQG$]\label{theorem:HQGcompleteness}
Let $\Xi\cup\{\alpha\}\subseteq\LQG$. Then
\[\Xi\models_\QG\alpha\text{ iff }\Xi\vdash_{\HQG}\alpha\]
\end{theorem}
\begin{proof}
As regards the soundness part, we just need to check that the axioms are valid. It is clear that if $\phi$ is a~propositional tautology, then $w\vDash\phi$ for any $w\in W$, and if $\chi$ is a~contradictory formula, then $w\nvDash\chi$ for any $w\in W$. But then, it is clear that $\mu(\|\phi|)>\mu(\|\chi\|)$, whence ${\sim}\triangle(\Be\phi\rightarrow\Be\chi)$ is valid. Furthermore, if $\phi\supset\phi'$ is classically valid, then $v(\phi)\subseteq v(\phi')$ in any model, whence, $\Be\phi\rightarrow\Be\phi'$ is valid as well.

For completeness, we reason by contraposition. Assume that $\Xi\nvdash_{\HQG}\alpha$. We can now extend $\Xi$ with the set of all formulas of the form ${\sim}(\top\coimplies\xi)$ with $\xi$ being a~modal axiom composed from the subformulas of $\Xi$ and $\alpha$. Such an extension is possible via applications of $\mathrm{nec}$ (Definition~\ref{def:HHB}) to the modal axioms. Denote the resulting set $\Xi^*$. It is clear that $\Xi^*\nvdash_{\HQG}\alpha$ and that, moreover, by the completeness theorem for bi-G\"{o}del logic, there is a~valuation $e$ s.t.\ $e[\Xi^*]>e(\alpha)$.

It remains to construct a~model falsifying the entailment. The $\biG$ valuation is already given. We proceed as follows. First, we set $W=\mathcal{P}(\Var(\Xi^*\cup\{\alpha\}))$. Then, we define $w\in v(p)$ iff $p\in w$ for any $w\in W$ and extend it to $\|\cdot\|$ in a~usual fashion. Finally, for any $\Be\phi\in\Sf[\Xi^*\cup\{\alpha\}]$, we set $\mu(\|\phi\|)=e(\Be\phi)$. For other $X\subseteq W$, we set $\mu(X)=\sup\{\mu(\|\phi\|):\phi\in\Sf[\Psi^*\cup\{\alpha\}],\|\phi\|\subseteq X\}$. It remains to check that $\mu$ thus defined satisfies Definition~\ref{def:QGsemantics}.

To show that $\mu$ is monotone, let $X\subseteq X'$. If there exist $\phi,\phi'\in\LCPL$ s.t.\ $\|\phi\|=X$ and $\|\phi'\|=X'$, it is clear from the construction of $W$ that $\phi\supset\phi'$ is a~classical tautology, whence, $\Be\phi\rightarrow\Be\phi'$ is an axiom and ${\sim}(\top\coimplies(\Be\phi\rightarrow\Be\phi'))\in\Xi^*$, and thus $\mu(\|\phi\|)\leq\mu(\|\phi'\|)$, as required. Otherwise, recall that
\begin{align*}
\mu(X)&=\sup\{\mu(\|\phi\|):\phi\in\Sf[\Psi^*\cup\{\alpha\}],\|\phi\|\subseteq X\}
\\
\mu(X')&=\sup\{\mu(\|\phi'\|):\phi'\in\Sf[\Psi^*\cup\{\alpha\}],\|\phi'\|\subseteq X'\}
\end{align*}
whence, clearly, $\mu(X)\leq\mu(X')$, as required. Finally, since ${\sim}\triangle(\Be\top\rightarrow\Be\bot)$ is an instance of the $\mathsf{nontriv}$ axiom, we have that $\mu(W)>\mu(\varnothing)$. 
\end{proof}
\subsection{Correspondence theory for weak uncertainty measures}
It is clear that $\QG$ is the logic of all uncertainty frames. However, $\QG$ does not validate some statements regarding measures one usually expects to hold in the classical case. For example, from the classical point of view and if we subscribe to the closed-world assumption, we \emph{know for certain} that the event ‘it rained in Paris on 28.03.2021 or it did not’ (formally, $r\vee{\sim}r$) occurred. However, $\Be(r\vee{\sim}r)$ \emph{is not valid} in $\QG$\footnote{This shows that $\QG$ can distinguish between capacities and generic uncertainty measures: $\Be\top$ is valid on a~frame if its uncertainty measure is a~capacity.} since it can be that $\mu(\|r\vee{\sim}r\|)<1$.

Moreover, if belief is represented as a~generic uncertainty measure or capacity, it is still possible for two incompatible (or even complementary) events to have measure $1$ \emph{at the same time}. Moreover, it is also possible that $\mu(Y\cup Y')>\mu(Y)$ even if $\mu(Y')=0$. In other words, it is possible that the agent's certainty in $\phi\vee\phi'$ is strictly higher than that in $\phi$ even if they are completely certain that $\phi'$ is not the case.

In this section, we will show how to axiomatise these and other conditions.
\begin{convention}
We introduce the following naming conventions for several formulas.
\begin{description}
\item[$1\mathsf{compl}$:] $\triangle\Be p\leftrightarrow{\sim}\Be{\sim}p$
\item[$\mathsf{disj}+$:] $({\sim}\Be(p\wedge p')\wedge{\sim\sim}\Be p\wedge\!{\sim\sim}\Be p')\!\rightarrow\!({\sim}\triangle(\Be(p\vee p')\!\rightarrow\!\Be p)\wedge\!{\sim}\triangle(\Be( p\vee p')\rightarrow\Be p'))$
\item[$\mathsf{disj}0$:] ${\sim}\Be p\rightarrow\triangle(\Be p'\leftrightarrow\Be(p\vee p'))$
\item[$\mathsf{cap}$:] $\Be\top\wedge{\sim}\Be\bot$
\end{description}
\end{convention}
\begin{theorem}\label{theorem:QGcorrespondencetheory}
Let $\mathbb{F}=\langle W,\mu\rangle$ be an uncertainty frame. Then the following equivalences hold
\begin{align*}
\mathbb{F}\models1\mathsf{compl}&\text{ iff }\ \mu(X)=1\Leftrightarrow\mu(W\setminus X)=0\tag{I}\\
\mathbb{F}\models\mathsf{disj}+&\text{ iff }\ \mu(Y\cap Y')=0\text{ and }\mu(Y),\mu(Y')\!>\!0\Rightarrow\mu(Y\cup Y')\!>\!\mu(Y),\mu(Y')\tag{II}\\
\mathbb{F}\models\mathsf{disj}0&\text{ iff }\ \mu(Y)=0\Rightarrow\mu(Y\cup Y')=\mu(Y')\tag{III}\\
\mathbb{F}\models\mathsf{cap}&\text{ iff }\ \mu\text{ is a~capacity}\tag{IV}
\end{align*}
\end{theorem}
\begin{proof}
We consider III, the other cases can be tackled in a~similar manner.

Indeed, let $\mu(Y)=0$ but $\mu(Y\cup Y')\neq\mu(Y')$ for some $Y,Y'\subseteq W$. Now let $v(p)=Y$ and $v(p')=Y'$. Then, it is clear that $e(\Be p)=0$ but $e(\Be p')\neq e(\Be(p\vee p'))$. Hence, $e(\mathsf{disj}0)\neq1$, as required.

For the converse, we assume that $e(\mathsf{disj}0)\!\neq\!1$. But then, $e(\mathsf{disj}0)\!=\!0$ since $\mathsf{disj}0$ is composed of $\triangle$-formulas and ${\sim}$-formulas. Thus $e({\sim}\Be p)\!=\!1$ but $e(\triangle(\Be p'\!\leftrightarrow\!\Be(p\vee p')))\!=\!0$ (i.e., $\mu(\|p\|\cup\|p'\|))\!\neq\!\mu(\|p'\|)$), whence $\mu(\|p\|)\!=\!0$ but $\mu(\|p\|)\cup \|p'\|))\neq\mu(\|p'\|))$, as required.
\end{proof}

Now, if we want to formalise qualitative counterparts of belief functions, we need to transform $\mu\mathbf{PM}$ into an axiom. However, there is no two-layered formula that can formalise $X\subsetneq Y$. Thus, we have to introduce a~new \emph{axiom schema}.
\begin{align}
{\sim}\triangle(\Be\chi\rightarrow\Be\phi)\rightarrow{\sim}\triangle(\Be(\chi\vee\psi)\rightarrow\Be(\phi\vee\psi))\nonumber\\
\text{with }\CPL\vdash\phi\supset\chi,~\CPL\vdash{\sim}(\chi\wedge\psi),\text{ and }\CPL\nvdash\chi\supset\phi\tag{$\QBel$}\label{equ:QB}
\end{align}
\begin{theorem}\label{theorem:QBcorrespondence}
Let $\mathbb{F}=\langle W,\mu\rangle$ be an uncertainty frame. Then $\mu$ satisfies $\mu\mathbf{PM}$ iff $\mathbb{F}\models\QBel$.
\end{theorem}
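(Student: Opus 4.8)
The plan is to prove the equivalence by contraposition in both directions, following the same strategy used in the proof of Theorem~\ref{theorem:QGcorrespondencetheory}. The key observation is that the side conditions on the axiom schema $\QBel$ --- namely $\CPL\vdash\phi\supset\chi$, $\CPL\vdash{\sim}(\chi\wedge\psi)$, and $\CPL\nvdash\chi\supset\phi$ --- are precisely the syntactic encodings of the set-theoretic hypotheses of $\mu\mathbf{PM}$: the first gives $\|\phi\|\subseteq\|\chi\|$, the second gives $\|\chi\|\cap\|\psi\|=\varnothing$, and the third guarantees that the inclusion $\|\phi\|\subseteq\|\chi\|$ can be made \emph{strict} in a suitable model. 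Meanwhile, the formula ${\sim}_\mathsf{G}\triangle(\Be\chi\Grightarrow\Be\phi)$ evaluates to $1$ exactly when $\mu(\|\phi\|)<\mu(\|\chi\|)$ (since $\triangle(\Be\chi\Grightarrow\Be\phi)$ is $1$ iff $\mu(\|\chi\|)\leq\mu(\|\phi\|)$), and similarly the consequent ${\sim}_\mathsf{G}\triangle(\Be(\chi\vee\psi)\Grightarrow\Be(\phi\vee\psi))$ is $1$ iff $\mu(\|\phi\vee\psi\|)<\mu(\|\chi\vee\psi\|)$, i.e.\ $\mu(\|\phi\|\cup\|\psi\|)<\mu(\|\chi\|\cup\|\psi\|)$.

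For the direction $\mu\mathbf{PM}\Rightarrow\mathbb{F}\models\QBel$, I would take an arbitrary valuation $e$ and $v$ on $\mathbb{F}$ and suppose toward a contradiction that $e(\QBel)\neq1$. Since the formula is a $\Grightarrow$ between two ${\sim}_\mathsf{G}\triangle$-formulas, it can only fail by taking value $0$, forcing the antecedent to be $1$ and the consequent to be $0$; this yields $\mu(\|\phi\|)<\mu(\|\chi\|)$ while $\mu(\|\phi\|\cup\|\psi\|)\geq\mu(\|\chi\|\cup\|\psi\|)$. Setting $X=\|\phi\|$, $Y=\|\chi\|$, $Z=\|\psi\|$, the side conditions give $X\subseteq Y$ and $Y\cap Z=\varnothing$, so the hypotheses of $\mu\mathbf{PM}$ are met (using $\mu(X)<\mu(Y)$ together with $X\subsetneq Y$, which holds because strictness of the measure inequality forces $X\neq Y$), and $\mu\mathbf{PM}$ delivers $\mu(X\cup Z)<\mu(Y\cup Z)$, contradicting $\mu(\|\phi\|\cup\|\psi\|)\geq\mu(\|\chi\|\cup\|\psi\|)$.

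For the converse $\mathbb{F}\models\QBel\Rightarrow\mu\mathbf{PM}$, I would argue contrapositively: suppose $\mu\mathbf{PM}$ fails, so there are $X\subsetneq Y$ with $\mu(X)<\mu(Y)$, $Y\cap Z=\varnothing$, but $\mu(X\cup Z)\geq\mu(Y\cup Z)$. The task is to find propositional formulas $\phi,\chi,\psi$ and a valuation realising these sets as $\|\phi\|$, $\|\chi\|$, $\|\psi\|$ while meeting the syntactic side conditions of the schema. Here I would choose fresh variables and define $v$ so that $\|\phi\|=X$, $\|\chi\|=Y$, $\|\psi\|=Z$; the inclusions $X\subseteq Y$ and disjointness $Y\cap Z=\varnothing$ translate into the required classical entailments $\CPL\vdash\phi\supset\chi$ and $\CPL\vdash{\sim}(\chi\wedge\psi)$, and the strictness $X\subsetneq Y$ gives $\CPL\nvdash\chi\supset\phi$. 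Then the chosen instance of $\QBel$ is falsified, contradicting $\mathbb{F}\models\QBel$.

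The main obstacle is the converse direction's model construction: unlike conditions I--III of Theorem~\ref{theorem:QGcorrespondencetheory}, which are single formulas whose variables can be freely reinterpreted, $\QBel$ is an axiom \emph{schema} whose instances are constrained by classical provability side conditions, so one must verify that arbitrary sets $X\subsetneq Y$, $Z$ witnessing the failure of $\mu\mathbf{PM}$ can genuinely be captured by formulas satisfying \emph{all three} conditions simultaneously --- in particular that the strict inclusion yields non-provability of $\chi\supset\phi$ rather than merely distinct sets. I expect this to require a careful choice of distinct propositional atoms (e.g.\ encoding $X$, $Y\setminus X$, and $Z$ by independent variables so that the Boolean structure forces exactly the desired inclusions and disjointness), after which the semantic computation reduces to the same routine $\triangle$/${\sim}_\mathsf{G}$ evaluation already rehearsed for case~III.
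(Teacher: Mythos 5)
Your left-to-right direction matches the paper's argument exactly: the side conditions give $\|\phi\|\subseteq\|\chi\|$ and $\|\chi\|\cap\|\psi\|=\varnothing$, the antecedent having value $1$ forces $\mu(\|\phi\|)<\mu(\|\chi\|)$ and hence the strict inclusion $\|\phi\|\subsetneq\|\chi\|$, and $\mu\mathbf{PM}$ then makes the consequent equal to $1$; that part is fine. The right-to-left direction is also the paper's strategy (falsify a single instance on a valuation realising $X,Y,Z$), but here your proposal has a genuine gap, and it is exactly the one you flag at the end without resolving. Your claim that the disjointness $Y\cap Z=\varnothing$ ``translates into'' $\CPL\vdash{\sim}(\chi\wedge\psi)$ is false as stated: provability is a property of the formulas alone, not of one particular valuation. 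Moreover, your suggested repair --- encoding $X$, $Y\setminus X$, $Z$ by independent atoms --- does not achieve it either: with $\phi:=p$, $\chi:=p\vee q$, $\psi:=r$ and $v(p)=X$, $v(q)=Y\setminus X$, $v(r)=Z$, the first and third side conditions hold, but $(p\vee q)\wedge r$ is classically satisfiable, so ${\sim}(\chi\wedge\psi)$ is not a tautology and the formula you would falsify is not an instance of the schema $\QBel$ at all. Independence of the atoms is precisely what prevents the disjointness from being provable.

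The fix is to hard-code the disjointness into the syntax rather than into the valuation: take, for example, $\phi:=p$, $\chi:=p\vee q$, $\psi:={\sim}(p\vee q)\wedge r$. Then $\CPL\vdash\phi\supset\chi$, $\CPL\vdash{\sim}(\chi\wedge\psi)$, and $\CPL\nvdash\chi\supset\phi$ hold outright, and with $v(p)=X$, $v(q)=Y\setminus X$, $v(r)=Z$ one still gets $\|\phi\|=X$, $\|\chi\|=Y$, and $\|\psi\|=(W\setminus Y)\cap Z=Z$ (using $Y\cap Z=\varnothing$), after which the routine $\triangle$/${\sim}_\mathsf{G}$ computation falsifies this instance, since $\mu(X)<\mu(Y)$ while $\mu(X\cup Z)\geq\mu(Y\cup Z)$. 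For what it is worth, the paper's own proof trips over the same point: it instantiates $\psi$ as ${\sim}q\wedge r$, for which $\chi\wedge\psi$ is equivalent to the satisfiable $p\wedge{\sim}q\wedge r$, so its displayed formula also violates the second side condition of the schema; the instantiation above repairs both your argument and the paper's.
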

\begin{proof}
Let $\mu\mathbf{PM}$ hold, and let further $\phi$, $\chi$, and $\psi$ be as in~\eqref{equ:QB}. Thus, $\|\phi\|\subseteq\|\chi\|$ and $\|\chi\|\cap\|\psi\|=\varnothing$. Note also that both ${\sim}\triangle(\Be\chi\rightarrow\Be\phi)$ and ${\sim}\triangle(\Be(\chi\vee\psi)\rightarrow\Be(\phi\vee\psi))$ can have values only in $\{0,1\}$.

Now, if $e({\sim}\triangle(\Be\chi\rightarrow\Be\phi))=1$, then $\mu(\|\phi\|)<\mu(\|\chi\|)$ and, in fact, $\|\phi\|\subsetneq\|\chi\|$. But then $\mu(\|\phi\|\cup\|\psi\|)<\mu(\|\chi\|\cup\|\psi\|)$, whence $\mu(\|\phi\vee\psi\|)<\mu(\|\chi\vee\psi\|)$, and thus $e({\sim}\triangle(\Be(\chi\vee\psi)\rightarrow\Be(\phi\vee\psi)))=1$, as well.

For the converse, let $\mu\mathbf{PM}$ fail for $\mathbb{F}$, and let, in particular, $\mu(X)<\mu(Y)$, $X\subsetneq Y$, and $Y\cap Z=\varnothing$, but $\mu(X\cup Z)\geq\mu(Y\cup Z)$. We show how to falsify $\QBel$.

We let $\|p\|=X$, $\|p\vee q\|=Y$, and $\|{\sim}q\wedge r\|=Z$. Now, it is easy to see that
\[e({\sim}\triangle(\Be(p\vee q)\rightarrow\Be p)\rightarrow{\sim}\triangle(\Be((p\vee q)\vee({\sim}q\wedge r))\rightarrow\Be(p\vee({\sim}q\wedge r))))=0\]
as required.
\end{proof}
\subsection{Logics of qualitative probabilities}
The main objective of this section is to provide a~two-layered axiomatisation of qualitative probabilities. To do this, we need to transform $\mu\mathbf{KPS}_m$ into axioms. This, however, is not straightforward. This is why, we will take a~detour through the classical logic of qualitative probability $\QP$ introduced in~\cite{Gardenfors1975}. The language of $\QP$ is given by the following grammar.
\[\LQP:~\phi\coloneqq p\in\Prop\mid{\sim}\phi\mid(\phi\wedge\phi)\mid(\phi\lesssim\phi).\]
The semantics uses probabilistic frames and models built upon them.
\begin{definition}[Frame semantics for $\QP$~\cite{Gardenfors1975}]\label{def:Gaerdenforssemantics}
A \emph{G\"{a}rdenfors probabilistic frame} is a~tuple $\mathbb{F}=\langle U,\{\PrG_x\}_{x\in U}\rangle$ with $U\neq\varnothing$ and $\{\PrG_x\}_{x\in U}$ being a~family of probability measures on $2^U$. A~\emph{G\"{a}rdenfors model} is a~tuple $\mathfrak{M}=\langle U,\{\PrG_x\}_{x\in U},v\rangle$ with $\langle U,\{\PrG_x\}_{x\in U}\rangle$ being a~frame and $v:\Prop\rightarrow2^U$ being a~valuation that is extended to a~satisfaction relation $\|\cdot\|$ as follows: 
\begin{itemize}
\item $\|p\| = v(p)$;
\item $\|{\sim}\phi\|=U\setminus\|\phi\|$;
\item $\|\phi\land\psi\|=\|\phi\|\cap\|\psi\|$;
\item $\|\phi\lesssim\psi\|=\{x\in U\mid\PrG_x(\|\phi\|)\leq\PrG_x(\|\psi\|)\}$.
\end{itemize}
For any model $\mathfrak{M}$, we say that $\phi$ is \emph{true in $\mathfrak{M}$} ($\mathfrak{M}\models\phi$) iff $\|\phi\|=U$. Furthermore, $\phi$ is \emph{valid in $\mathbb{F}$} ($\mathbb{F}\models\phi$) iff $\phi$ is true in every model on $\mathbb{F}$.
\end{definition}

Furthermore, we can define an additional notion of satisfaction in a~state.
\begin{definition}[Pointed model semantics]\label{def:Gaerdenforssemanticspointed}
Let $\mathfrak{M}$ be a~G\"{a}rdenfors model and $x\in\mathfrak{M}$. We define $\mathfrak{M},x\vDash\phi$ ($\phi$ is true at $x$) as follows.
\begin{itemize}
\item $\mathfrak{M},x\vDash p$ iff $x\in v(p)$.
\item $\mathfrak{M},x\vDash{\sim}\phi$ iff $\mathfrak{M},x\nvDash\phi$.
\item $\mathfrak{M},x\vDash\phi\wedge\phi'$ iff $\mathfrak{M},x\vDash\phi$ and $\mathfrak{M},x\vDash\phi'$.
\item $\mathfrak{M},x\vDash\phi\lesssim\phi'$ iff $\PrG_x(\|\phi\|)\leq\PrG_x(\|\phi'\|)$.
\end{itemize}
In the remainder of the paper, we will call a~tuple $\langle\mathfrak{M},x\rangle$ a~\emph{pointed model}.
\end{definition}
Other connectives and modalities can be introduced in an expected fashion:
\begin{align*}
\phi\vee\phi'&\coloneqq{\sim}({\sim}\phi\wedge{\sim}\phi')&\phi\supset\phi'&\coloneqq{\sim}\phi\vee\phi'&\phi\equiv\phi'&\coloneqq(\phi\supset\phi')\wedge(\phi'\supset\phi)\\
\phi\approx\phi'&\coloneqq(\phi\lesssim\phi')\wedge(\phi'\lesssim\phi)&\top&\coloneqq p\supset p&\phi<\phi'&\coloneqq(\phi\lesssim\phi')\wedge{\sim}(\phi'\lesssim\phi)
\end{align*}

Let us now recall the axiomatisation of $\QP$. For this, we borrow the $\mathtt{E}$-notation from~\cite{Segerberg1971} and~\cite{Gardenfors1975}. This will help us express the Kraft--Pratt--Seidenberg conditions in a~more concise manner.
\begin{convention}[$\mathtt{E}$-notation]
Consider $\LQP$ formulas $\phi_1$, \ldots, $\phi_n$ and $\chi_1$, \ldots, $\chi_n$. Let further, $\phi^\circ\in\{\phi,{\sim}\phi\}$ and $\chi^\circ\in\{\chi,{\sim}\chi\}$. We introduce a~new operator $\mathtt{E}$ and write
\[\phi_1,\ldots,\phi_n\mathtt{E}\chi_1,\ldots,\chi_n\]
to designate that necessarily the same number of $\phi^\circ_i$'s as of $\chi^\circ_j$'s are actually of the form ${\sim}\phi_i$ and ${\sim}\chi_j$, respectively. For example
\begin{align*}
p_1,p_2\mathtt{E}q_1,q_2&\coloneqq((p_1\wedge p_2\wedge q_1\wedge q_2)\vee({\sim}p_1\wedge p_2\wedge{\sim}q_1\wedge q_2)\\
&\vee({\sim}p_1\wedge p_2\wedge q_1\wedge{\sim}q_2)\vee(p_1\wedge{\sim}p_2\wedge q_1\wedge{\sim}q_2)\\&\vee(p_1\wedge{\sim}p_2\wedge{\sim}q_1\wedge q_2)\vee({\sim}p_1\wedge{\sim}p_2\wedge{\sim}q_1\wedge{\sim}q_2))\approx\top
\end{align*}
More formally, we let $M=\{1,\ldots,m\}$, $K,L\subseteq M$ and set
\[\phi_1,\ldots,\phi_m\mathtt{E}\chi_1,\ldots,\chi_m\coloneqq\left(\bigvee\limits^{m}_{i=0}\bigvee\limits_{\scriptsize{\begin{matrix}|K|=i\\|L|=i\end{matrix}}}\left(\bigwedge\limits_{k\in K}{\sim}\phi_k\wedge\bigwedge\limits_{k'\in M\setminus K}\phi_{k'}\wedge\bigwedge\limits_{l\in L}{\sim}\chi_l\wedge\bigwedge\limits_{l'\in M\setminus L}\chi_{l'}\right)\right)\approx\top\]
\end{convention}
The axiomatisation of $\QP$ which we call $\HQP$ expands the classical propositional rules with new axioms and rules concerning $\lesssim$.
\begin{definition}[$\HQP$]\label{def:HQP}
\begin{description}
\item[]
\item[$(PC)$] All propositional tautologies.
\item[$(A0)$] $((\phi_1\equiv\phi_2)\approx\top)\land((\psi_1\equiv\psi_2)\approx\top))\supset((\phi_1\lesssim\psi_1)\equiv(\phi_2\lesssim\psi_2))$.
\item[$(A1)$] $\bot\lesssim\phi$.
\item[$(A2)$] $(\phi\lesssim\psi)\lor(\psi\lesssim\phi)$.
\item[$(A3)$] $\bot<\top$. 
\item[$(A4)_m$] $(\phi_1,\ldots,\phi_m\mathtt{E}\psi_1,\ldots,\psi_m)\land\bigwedge\limits_{i=1}^{m-1}(\phi_i\lesssim\psi_i)\supset(\psi_m\lesssim\phi_m)$
\end{description}
The rules are modus ponens and necessitation:
\begin{align*}
\mathsf{MP}:\dfrac{\phi\supset\chi\quad\phi}{\chi}&&\mathsf{nec}:\dfrac{\vdash\phi}{\vdash\phi\approx\top}
\end{align*}
\end{definition}

Let us consider the modal axioms. $(A0)$ allows for substitutions of ‘believably equivalent’ formulas. Other axioms correspond to the conditions on measures we cited in the beginning of the section. In particular, $(A1)$ corresponds to $\mathbf{Q1}$; $(A2)$ is the linearity condition; $(A3)$ corresponds to $\mathbf{Q2}$. Finally, the family of axioms $(A4)_m$ corresponds to the $\mathbf{KPS}_m$. As expected~\cite[P.179]{Gardenfors1975}, the expansion of the classical propositional logic with these axioms is complete w.r.t.\ all G\"{a}rdenfors' probabilistic frames.

Note that $\LQP$ \emph{does allow} for the nesting of $\lesssim$ while $\LQG$ prohibits the nesting of $\Be$. However, a~specific fragment of $\LQP$ can be embedded into $\LQG$.
\begin{definition}[Embedding of simple inequality formulas]\label{def:QPCQPGembedding}
We define \emph{simple inequality formulas} ($\SIF$'s) using the following grammar ($\chi$ and $\chi'$ do not contain $\lesssim$):
\[\SIF\ni\phi\coloneqq\chi\lesssim\chi'\mid{\sim}\phi\mid(\phi\wedge\phi)\mid(\phi\vee\phi)\mid(\phi\supset\phi)\]
We define a~translation $^\triangle$ of $\SIF$'s into $\LQG$ as follows.
\begin{align*}
(\chi\lesssim\chi')^\triangle&=\triangle(\Be\chi\rightarrow\Be\chi')\\
({\sim}\phi)^\triangle&={\sim}\phi^\triangle\\
(\phi\circ\phi')^\triangle&=\phi^\triangle\circ\phi'^\triangle\tag{$\circ\in\{\wedge,\vee\}$}\\
(\phi\supset\phi')^\triangle&=\phi^\triangle\rightarrow\phi'^\triangle
\end{align*}
\end{definition}

\begin{remark}\label{rem:nonrepresentableproperties}
It is instructive to observe that not all statements about comparing beliefs can be represented as $\SIF$'s and their translations into $\LQG$. Indeed, $\mathsf{cap}$ and $\mathsf{disj}+$ \emph{are not} translations of $\SIF$'s. In fact, ${\sim\sim}\Be p$ stipulates that the agent's belief in $p$ \emph{is positive}. In $\QP$, it can only be expressed as $p>\bot$. However, as we have already mentioned, $\QP$ cannot distinguish between normalised and non-normalised measures. Thus, one could demand that $\PrG_x$'s be not probability measures but any uncertainty measures satisfying Kraft--Pratt--Seidenberg conditions. This means that $p>\bot$ \emph{is stronger than} ${\sim\sim}\Be p$ for the latter is \emph{compatible with} $\triangle(\Be p\leftrightarrow\Be\bot)$.
\end{remark}

In what follows, we will say that a~$\QG$ model $\mathfrak{M}=\langle W,v,\mu,e\rangle$ is a~\emph{$\QPG$ model} if $\mu$ satisfies $\mu\mathbf{KPS}_m$. In other words, in a~$\QPG$ model, the order on $2^W$ induced by $\mu$ is a~qualitative counterpart of a~probability measure.

We can now establish that the translation in Definition~\ref{def:QPCQPGembedding} is indeed faithful. We do this by showing how to transform a~given pointed G\"{a}rdenfors model $\langle\mathfrak{M},x\rangle$ (recall Definition~\ref{def:Gaerdenforssemanticspointed}) into a~$\QPG$ model that satisfies exactly the translations of $\SIF$'s that $\langle\mathfrak{M},x\rangle$ satisfies. And conversely, how to provide a~G\"{a}rdenfors model using a~given $\QPG$ model preserving all satisfied $\SIF$'s.
\begin{definition}[$\mathsf{G}$-counterparts]\label{def:Gcounterparts}
Let $\mathfrak{M}\!=\!\langle U,\{\PrG_x\}_{x\in U},v\rangle$ be a~G\"{a}r\-den\-fors model. A~\emph{$\mathsf{G}$-counter\-part} of a~\emph{pointed model} $\langle\mathfrak{M},x\rangle$ is the $\QPG$ model $\mathfrak{M}_\QPG=\langle U,v,\PrG_x,e\rangle$.
\end{definition}
\begin{remark}
Note that a~$\triangle$-less translation of $\chi\lesssim\chi'$ as $\Be\chi\rightarrow\Be\chi'$ does not preserve truth. Indeed, let $\PrG_x(\|p\|)=0.7$, $\PrG_x(\|q\|)=0.6$, $\PrG_x(\|r\|)=0.5$, $\PrG_x(\|s\|)=0.4$. Then $(p\lesssim q)\supset(r\lesssim s)$ is \emph{true} at $x$ but $e((\Be p\rightarrow\Be q)\rightarrow(\Be r\rightarrow\Be s))=0.4$.
\end{remark}
\begin{lemma}\label{lemma:GaerdenforstoGoedel}
Let $\langle\mathfrak{M},x\rangle$ be a~\emph{pointed G\"{a}rdenfors model} and $\mathfrak{M}_\QPG$ its $\mathsf{G}$-counterpart, then $\mathfrak{M},x\vDash\phi$ iff $e(\phi^\triangle)=1$ for any $\phi\in\SIF$.
\end{lemma}
\begin{proof}
First, it is clear that for any classical formula $\chi$, it holds that $\|\chi\|=v(\chi)$ since $\mathfrak{M}$ and $\mathfrak{M}_\mathsf{G}$ have the same valuation.

We proceed by induction on $\phi$. First, let $\phi\coloneqq(\chi\lesssim\chi')$.
\begin{align*}
\mathfrak{M},x\vDash\chi\lesssim\chi'&\text{ iff }\PrG_x(\|\chi\|)\leq\PrG_x(\|\chi'\|)\\
&\text{ iff }\PrG_x(v(\chi))\leq\PrG_x(v(\chi'))\tag{$\|\chi\|=v(\chi)$}\\
&\text{ iff }e(\Be\chi\rightarrow\Be\chi')=1\\
&\text{ iff }e(\triangle(\Be\chi\rightarrow\Be\chi'))=1
\end{align*}
For the inductive step, we consider $\phi\coloneqq\phi_1\wedge\phi_2$ and $\phi\coloneqq{\sim}\phi'$.
\begin{align*}
\mathfrak{M},x\vDash\phi_1\wedge\phi_2&\text{ iff }\mathfrak{M},x\vDash\phi_1\text{ and }\mathfrak{M},x\vDash\phi_2\\
&\text{ iff }e(\phi^\triangle_1)\text{ and }e(\phi^\triangle_2)=1\tag{by IH}\\
&\text{ iff }e((\phi_1\wedge\phi_2)^\triangle)=1
\end{align*}
\begin{align*}
\mathfrak{M},x\vDash{\sim}\phi'&\text{ iff }\mathfrak{M},x\nvDash\phi'\\
&\text{ iff }e(\phi'^\triangle)\neq1\tag{by IH}\\
&\text{ iff }e(\phi'^\triangle)=0\tag{$\phi'^\triangle$ is a~Boolean combination of $\triangle$-formulas}\\
&\text{ iff }e(({\sim}\phi')^\triangle)=1
\end{align*}
\end{proof}
\begin{definition}[$\QP$-counterparts]\label{def:QPcounterpart}
Let $\mathfrak{M}=\langle W,v,\mu,e\rangle$ be a~$\QPG$ model. Its \emph{$\QP$-counterpart} is any $\QP$ pointed model $\langle\mathfrak{M},w\rangle$ with $w\in W$ s.t.\ $\mathfrak{M}_\mathsf{G}=\langle W,v,\{\pi_{\mu_x}\}_{x\in W}\rangle$ and $\pi_{\mu_x}$ is a~probability measure s.t.\ $\mu(X)\leq\mu(Y)$ iff $\pi_{\mu_x}(X)\leq\pi_{\mu_x}(Y)$ for all $X,Y\subseteq W$.
\end{definition}
\begin{proposition}\label{prop:QPcounterpartsexistence}
For any $\QPG$ model, there exists its $\QP$ counterpart.
\end{proposition}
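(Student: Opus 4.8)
The plan is to recognise this as a direct consequence of the Kraft--Pratt--Seidenberg representation theorem, which is recorded in the Qualitative characterisation theorem above. Given a $\QPG$ model $\mathfrak{M}=\langle W,v,\mu,e\rangle$, I would first pass from the measure $\mu$ to the order $\preccurlyeq_\mu$ it induces on $2^W$, setting $X\preccurlyeq_\mu Y$ iff $\mu(X)\leq\mu(Y)$. The whole problem then reduces to producing a single probability measure on $2^W$ whose induced order coincides with $\preccurlyeq_\mu$, since the family $\{\pi_{\mu_x}\}_{x\in W}$ required by Definition~\ref{def:QPcounterpart} may be taken constant (the uncertainty measure of a $\QPG$ model carries no dependence on the point), and the distinguished world $w$ can be chosen arbitrarily.

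Next I would check that $\preccurlyeq_\mu$ satisfies the four conditions $\mathbf{Q1}$--$\mathbf{Q3}$ and $\mathbf{KPS}_m$ which, by item~3 of the Qualitative characterisation theorem, characterise exactly the orders arising from probability measures. Conditions $\mathbf{Q1}$ and $\mathbf{Q3}$ follow immediately from the monotonicity of $\mu$ (from $\varnothing\subseteq X\subseteq W$ we get $\mu(\varnothing)\leq\mu(X)\leq\mu(W)$), $\mathbf{Q2}$ is precisely the non-triviality $\mu(\varnothing)<\mu(W)$ built into the definition of an uncertainty measure, and $\mathbf{KPS}_m$ is nothing but the hypothesis $\mu\mathbf{KPS}_m$ defining a $\QPG$ model, re-read at the level of the induced order. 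Crucially, although $\mu$ need not be normalised, only its order is relevant here, so the possible failure of $\mu(\varnothing)=0$ or $\mu(W)=1$ causes no difficulty.

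Having verified the conditions, item~3 of the theorem yields a probability measure $\pi$ on $2^W$ with $\mu(X)\leq\mu(Y)\Leftrightarrow\pi(X)\leq\pi(Y)$ for all $X,Y\subseteq W$. I would then set $\pi_{\mu_x}\coloneqq\pi$ for every $x\in W$, fix any $w\in W$, and declare $\langle\mathfrak{M}_\mathsf{G},w\rangle$ with $\mathfrak{M}_\mathsf{G}=\langle W,v,\{\pi_{\mu_x}\}_{x\in W}\rangle$ to be the desired $\QP$-counterpart; by construction each $\pi_{\mu_x}$ agrees with $\mu$ in the sense demanded by Definition~\ref{def:QPcounterpart}, and we are done.

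The substantive mathematical content is entirely outsourced to the Kraft--Pratt--Seidenberg direction of the characterisation theorem, so the only genuine obstacle is a bookkeeping one: making the conditions on $\preccurlyeq_\mu$ line up exactly with the hypotheses of that theorem, and confirming that the non-normalisation of $\mu$ introduces no mismatch with a genuine probability measure. If one wished to avoid invoking the theorem as a black box, the hard part would be the separating-hyperplane (Farkas-type) argument underlying $\mathbf{KPS}_m$; but since that result is available earlier, I would simply cite it. A minor point worth flagging is finiteness: the representation is cleanest for finite $W$, which is the situation in the completeness construction, so I would either restrict to finite frames or appeal to the characterisation theorem in the generality in which it is stated.
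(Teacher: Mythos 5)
Your proposal is correct and takes essentially the same route as the paper, whose entire proof consists of noting that $\mu$ conforms to the Kraft--Pratt--Seidenberg conditions and citing the representation theorem to obtain a probability measure preserving all orders from $\mu$. Your write-up simply fills in the bookkeeping the paper leaves implicit (verifying $\mathbf{Q1}$--$\mathbf{Q3}$ from monotonicity and non-triviality, taking the family $\{\pi_{\mu_x}\}_{x\in W}$ to be constant, and flagging the finiteness caveat of the representation theorem), so it is a sound elaboration rather than a different argument.
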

\begin{proof}
Note that $\mu$ conforms to Kraft--Pratt--Seidenberg conditions~\cite{KraftPrattSeidenberg1959,Scott1964}. Thus, there is a~pro\-ba\-bi\-li\-ty measure on the same set that preserves all orders from $\mu$.
\end{proof}

Note that we do not demand $\QP$-counterparts to be unique as we are able to prove the next statement regardless.
\begin{lemma}\label{lemma:GoedeltoGaerdenfors}
Let $\mathfrak{M}=\langle W,v,\mu,e\rangle$ be a~$\QPG$ model and $\langle\mathfrak{M}_\mathsf{G},w\rangle$ its any counterpart. Then, $e(\phi^\triangle)=1$ iff $\mathfrak{M}_\mathsf{G},w\vDash\phi$ for any $\phi\in\SIF$.
\end{lemma}
\begin{proof}
Let $e(\phi^\triangle(\mathsf{s}^\triangle_1,\ldots,\mathsf{s}^\triangle_n))=1$ with $\mathsf{s}_i=\chi_i\lesssim\chi'_i$ and $\mathsf{s}^\triangle_i=\triangle(\Be\chi\rightarrow\Be\chi')$. Since the measure on the $\QP$ counterpart preserves all order relations from $\mathfrak{M}$, it is clear that $\mathfrak{M},w\vDash\mathsf{s}_i$ iff $e(\mathsf{s}_i)=1$ for all $i\leq n$. But then we have that $e(\phi^\triangle)=1$ iff $\mathfrak{M}_\mathsf{G},w\vDash\phi$ since $e(\mathsf{s}^\triangle_i)\in\{0,1\}$ and G\"{o}del connectives behave classically on values $0$ and $1$.
\end{proof}
\begin{theorem}\label{theorem:QPCQPGembedding}
Let $\phi\in\SIF$. Then $\phi$ is $\QP$ valid iff $\phi^\triangle$ is $\QPG$ valid.
\end{theorem}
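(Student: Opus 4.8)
The plan is to reduce the theorem to the two translation lemmas already established, proving each direction of the biconditional by contraposition. First I would unpack the two notions of validity. By Definition~\ref{def:Gaerdenforssemantics}, $\phi$ is $\QP$ valid iff $\|\phi\|=U$ in every G\"{a}rdenfors model, which, since $x\in\|\phi\|$ iff $\mathfrak{M},x\vDash\phi$, is equivalent to saying that $\mathfrak{M},x\vDash\phi$ holds for every pointed G\"{a}rdenfors model $\langle\mathfrak{M},x\rangle$. Dually, $\phi^\triangle$ is $\QPG$ valid iff $e(\phi^\triangle)=1$ in every $\QPG$ model. Hence it suffices to show that a pointed G\"{a}rdenfors model falsifying $\phi$ exists iff a $\QPG$ model falsifying $\phi^\triangle$ exists.

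For the direction ``$\phi^\triangle$ $\QPG$ valid $\Rightarrow$ $\phi$ $\QP$ valid'' I would argue contrapositively. Suppose $\phi$ is not $\QP$ valid, so there is a pointed model $\langle\mathfrak{M},x\rangle$ with $\mathfrak{M},x\nvDash\phi$. Pass to its $\mathsf{G}$-counterpart $\mathfrak{M}_\QPG$ (Definition~\ref{def:Gcounterparts}). Its uncertainty measure is the genuine probability measure $\PrG_x$, which satisfies the Kraft--Pratt--Seidenberg condition $\mu\mathbf{KPS}_m$; thus $\mathfrak{M}_\QPG$ is a bona fide $\QPG$ model. By Lemma~\ref{lemma:GaerdenforstoGoedel}, $\mathfrak{M},x\nvDash\phi$ yields $e(\phi^\triangle)\neq1$, so $\phi^\triangle$ is not $\QPG$ valid.

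For the converse, suppose $\phi^\triangle$ is not $\QPG$ valid, witnessed by a $\QPG$ model $\mathfrak{M}=\langle W,v,\mu,e\rangle$ with $e(\phi^\triangle)\neq1$. By Proposition~\ref{prop:QPcounterpartsexistence} this $\mathfrak{M}$ has a $\QP$ counterpart $\langle\mathfrak{M}_\mathsf{G},w\rangle$ (Definition~\ref{def:QPcounterpart}), where the accompanying probability measures preserve exactly the order relations induced by $\mu$. Lemma~\ref{lemma:GoedeltoGaerdenfors} then gives $e(\phi^\triangle)=1$ iff $\mathfrak{M}_\mathsf{G},w\vDash\phi$; since the left-hand side fails, $\mathfrak{M}_\mathsf{G},w\nvDash\phi$, so $\phi$ is not $\QP$ valid. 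Combining the two directions yields the equivalence.

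The routine content is already packaged in the lemmas, so the only points requiring care are the two change-of-semantics steps: that the $\mathsf{G}$-counterpart of a probabilistic pointed model really is a $\QPG$ model (i.e.\ that probability measures validate $\mu\mathbf{KPS}_m$, the easy direction of the Kraft--Pratt--Seidenberg theorem), and, in the other direction, the appeal to Proposition~\ref{prop:QPcounterpartsexistence}, whose force comes from the hard direction of that theorem---the existence of an order-preserving probability measure for any qualitative probability ordering. The main obstacle is therefore not in the present argument but is isolated in Proposition~\ref{prop:QPcounterpartsexistence}; once that existence result and the two translation lemmas are in hand, the theorem is a direct two-line contraposition in each direction.
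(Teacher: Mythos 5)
Your proposal is correct and takes essentially the same route as the paper: the paper's proof is simply the one-line statement that the theorem follows immediately from Lemmas~\ref{lemma:GaerdenforstoGoedel} and~\ref{lemma:GoedeltoGaerdenfors}, and your argument is exactly the expansion of that one-liner --- contraposition in each direction via the $\mathsf{G}$-counterpart and the $\QP$-counterpart (Proposition~\ref{prop:QPcounterpartsexistence}). Your explicit observations that the $\mathsf{G}$-counterpart is a genuine $\QPG$ model (since a probability measure satisfies $\mu\mathbf{KPS}_m$) and that the converse direction rests on the hard direction of the Kraft--Pratt--Seidenberg theorem are details the paper leaves implicit, and they are correctly placed.
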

\begin{proof}
Immediately from Lemmas~\ref{lemma:GaerdenforstoGoedel} and~\ref{lemma:GoedeltoGaerdenfors}.
\end{proof}

Now, observe that if we instantiate $\phi_i$'s and $\psi_i$'s in $(A4)_m$ with propositional formulas, these formulas are going to be $\SIF$'s. This means that to obtain the axiomatisation of the logic complete w.r.t.\ $\QPG$ frames, we only need to translate $(A4)_m$ into $\LQG$.
\begin{convention}[$\mathtt{E}$-notation for $\QPG$]
Consider $\LCPL$ formulas $\phi_1$, \ldots, $\phi_n$ and $\chi_1$, \ldots, $\chi_n$. Let further, $\phi^\circ\in\{\phi,{\sim}\phi\}$ and $\chi^\circ\in\{\chi,{\sim}\chi\}$. We introduce operator $\EG$ and write
\[\phi_1,\ldots,\phi_n\EG\chi_1,\ldots,\chi_n\]
to designate that necessarily the same number of $\phi^\circ_i$'s as of $\chi^\circ_j$'s are actually of the form ${\sim}\phi_i$ and ${\sim}\chi_j$, respectively.

More formally, we let $M=\{1,\ldots,m\}$, $K,L\subseteq M$, and set
\[\phi_1,\ldots,\phi_m\EG\chi_1,\ldots,\chi_m\coloneqq\triangle\!\left(\!\Be\!\left(\bigvee\limits^{m}_{i=0}\bigvee\limits_{\scriptsize{\begin{matrix}|K|\!=\!i\\|L|\!=\!i\end{matrix}}}\left(\bigwedge\limits_{k\in K}\!{\sim}\phi_k\wedge\!\!\!\!\bigwedge\limits_{k'\!\in\!M\setminus K}\!\!\!\phi_{k'}\wedge\!\bigwedge\limits_{l\in L}{\sim}\chi_l\wedge\!\!\bigwedge\limits_{l'\in M\setminus L}\!\!\!\chi_{l'}\right)\!\!\right)\!\!\leftrightarrow\!\Be\top\!\right)\]

$\EG$ has the same intended meaning as $\mathtt{E}$. Namely, that the measure of $$\bigvee\limits^{m}_{i=0}\bigvee\limits_{\scriptsize{\begin{matrix}|K|=i\\|L|=i\end{matrix}}}\left(\bigwedge\limits_{k\in K}\!{\sim}\phi_k\wedge\!\!\!\bigwedge\limits_{k'\in M\setminus K}\!\!\!\phi_{k'}\wedge\!\bigwedge\limits_{l\in L}{\sim}\chi_l\wedge\!\!\bigwedge\limits_{l'\in M\setminus L}\!\!\!\chi_{l'}\right)$$ is equal to the measure of $\top$.

Finally, we define
\begin{description}
\item[$\mathsf{KPS}_m$:] $(\phi_1,\ldots,\phi_m\EG\chi_1,\ldots,\chi_m)\wedge\bigwedge\limits_{i=1}^{m-1}\triangle(\Be\phi_i\rightarrow\Be\chi_i)\rightarrow\triangle(\Be\chi_m\rightarrow\Be\phi_m)$.
\end{description}
In what follows, we use $\HQPG$ to designate the extension of $\HQG$ with $\mathsf{KPS}_m$ axioms for every $m>0$.
\end{convention}

The next statements are straightforward corollaries from Theorems~\ref{theorem:QGcorrespondencetheory} and~\ref{theorem:QPCQPGembedding}.
\begin{theorem}\label{theorem:QPG}
Let $\mathsf{KPS}=\{\mathsf{KPS}_m:m\in\mathbb{N}\}$ and $\mathbb{F}=\langle W,\mu\rangle$. Then $\mathbb{F}\models\mathsf{KPS}$ iff $\mu$ satisfies Kraft--Pratt-Seidenberg conditions.
\end{theorem}
\begin{convention}
\label{conv:axiomaticextension}
Let $\mathcal{H}$ be a~Hilbert-style calculus and $\Phi$ be a~scheme of formulas. We denote with $\mathcal{H}\otimes\Phi$ the calculus obtained by adding $\Phi$ to $\mathcal{H}$ as an \emph{axiom scheme}. We also say that $\mathcal{H}$ is \emph{the logic of a~class $\mathbb{K}$ of frames} iff
\[\Xi\vdash_{\mathcal{H}}\alpha\text{ iff }\Xi\models_{\mathbb{K}}\alpha\]
\end{convention}
\begin{theorem}\label{theorem:QGextensionscompleteness}
\begin{enumerate}
\item[]
\item $\HQG\otimes\mathsf{1compl}$ is the logic of the frames satisfying condition (I) from Theorem~\ref{theorem:QGcorrespondencetheory}.
\item $\HQG\otimes\mathsf{disj+}$ is the logic of the frames satisfying condition (II) from Theorem~\ref{theorem:QGcorrespondencetheory}.
\item $\HQG\otimes\mathsf{disj0}$ is the logic of the frames satisfying condition (III) from Theorem~\ref{theorem:QGcorrespondencetheory}.
\item $\HQG\otimes\mathsf{cap}$ is the logic of the frames whose measure is a~capacity.
\item $\HQG\otimes\QBel$ is the logic of the frames whose measure satisfies $\mu\mathbf{PM}$.
\item $\HQPG$ is the logic of $\QPG$ frames.
\end{enumerate}
\end{theorem}
\begin{proof}
All the proofs can be conducted in a~similar manner. This is why, we provide only the most instructive case --- that of $\HQPG$. Soundness follows immediately from Theorem~\ref{theorem:QPG}. For completeness, we reason by contraposition.

Assume that $\Xi\nvdash_{\HQPG}\alpha$. We extend $\Xi$ with all formulas of the form ${\sim}(\top\coimplies\xi)$ with $\xi$ being an instance of a~modal axiom composed from $\Sf[\Xi\cup\{\alpha\}]$ and denote the resulting set $\Xi^*$. Since all such formulas are theorems in $\HQPG$, it is clear that $\Xi^*\nvdash_{\HQPG}\alpha$. But then, by the strong completeness of $\biG$, we have that there is a~$\biG$ valuation $e$ s.t.\ $e[\Xi^*]>e(\alpha)$. Furthermore, it is clear that $e({\sim}(\top\coimplies\xi))=1$ (whence, $e(\xi)=1$) for every $\xi$.

We now need to construct the falsifying model. The valuation is already given. Now, we set $W=\mathcal{P}(\Var(\Xi^*\cup\{\alpha\}))$. Then, we define $w\in v(p)$ iff $p\in w$ for any $w\in W$ and extend it to $\|\cdot\|$ in a~usual fashion. Finally, for any $\Be\phi\in\Sf[\Xi^*\cup\{\alpha\}]$, we set $\mu(\phi)=e(\Be\phi)$. It is clear that $\mu$ is defined on \emph{a subalgebra of $W$ over set union, intersection and complement} and that it satisfies $\mathbf{Q1}$--$\mathbf{Q3}$ and $\mathbf{KPS}_m$ from Theorem~\ref{theorem:qualitativecharacterisations}. But then, there is a~(possibly non-normalised) probability measure $\mathtt{p}_\mu$ on this subalgebra that agrees with $\mu$ on the order. Thus, we can extend $\mathtt{p}_\mu$ to the entire $W$, and clearly, the extended measure will satisfy $\mathbf{Q1}$--$\mathbf{Q3}$ and $\mathbf{KPS}_m$ because it is a~probability measure.
\end{proof}

\section{Paraconsistent logics of monotone comparative belief\label{sec:MCB}}
As we have discussed in the previous section, it is not necessary that an agent be able to assign a~number to their certainty in a~given statement. Furthermore, it is possible that the evidence regarding a~given statement is contradictory or incomplete, whence if we want to compare our certainty in different statements, we need to treat evidence in favour and evidence against independently. Consider, e.g., the following situation: all sources give contradictory information regarding $\phi$ but no information regarding $\chi$, both our certainty that $\phi$ is true and our certainty in its falsity should be greater than our certainty in either truth or falsity of $\chi$.

These characteristics of evidence can be illustrated in the context of court proceedings. Indeed, the evidence in court has the features listed above: it is rare that one can reliably measure one's certainty in any given piece thereof, instead, the court tries to establish whose claims are more compelling; the evidence presented by witnesses can be incomplete or inconsistent; in any court proceeding there are two parties, and the non-contradictory evidence can be treated as favouring one of them (or irrelevant to the process).

This is why we analyse these contexts using paraconsistent G\"{o}del logics introduced in~\cite{BilkovaFrittellaKozhemiachenko2021TABLEAUX} on the outer layer (while $\BD$ is the inner-layer logic). Since G\"{o}del logic can be seen as a~logic of comparative truth, its paraconsistent expansion with a~De Morgan negation $\neg$ can be seen as a~logic of comparative truth and falsity. To make the paper self-contained, we recall $\BD$ and $\Gsquare$.

In the present paper, as well as in~\cite{BilkovaFrittellaKozhemiachenkoMajerNazari2023APAL}, we employ the interpretation of paraconsistent uncertainty from~\cite{KleinMajerRad2021}. Other approaches thereto can be found in, e.g.~\cite{Bueno-SolerCarnielli2016,RodriguesBueno-SolerCarnielli2021}. In these papers, the authors introduce an extension of $\BD$ with operators $\circ$ (classicality) and $\bullet$ (non-classicality) which they call $\mathsf{LET_F}$ (the logic of evidence and truth). It is worth mentioning that the proposed axioms of probability are very close to those from~\cite{KleinMajerRad2021}: e.g., both allow measures $\mathtt{p}$ s.t.\ $\mathtt{p}(\phi)+\mathtt{p}(\neg\phi)<1$ (if the information regarding $\phi$ is incomplete) or $\mathtt{p}(\phi)+\mathtt{p}(\neg\phi)>1$ (when the information is contradictory). The main difference between our approach and that of~\cite{Bueno-SolerCarnielli2016,RodriguesBueno-SolerCarnielli2021} is that Bueno-Soler et al.\ do not introduce a~logic that would formalise reasoning with paraconsistent probabilities. Flaminio, Godo, and Marchioni~\cite{FlaminioGodoMarchioni2011} give an overview of various systems of reasoning about uncertain events using various frameworks of fuzzy logics. They focus on uncertainty measures --- in particular capacities (plausibility measures in the terminology of the article) and possibility measures and represent them as modal fuzzy logics using some core fuzzy logic with the $\triangle$ operator as a~background. The article deals with quantitative uncertainty and does not discuss the question of its qualitative counterpart.

Flaminio et al.~\cite{FlaminioGodoUgolini2022} use measures of the inconsistency of modal theories over H\'{a}jek's framework for probabilistic reasoning as a~modal theory over Lukasiewicz fuzzy logic which they extend with rational truth constants. Their framework represents inconsistent uncertain information, but unlike the approach of~\cite{KleinMajerRad2021}, they do not address the problem of the representation of incomplete information. Neither do they deal with the question of qualitative probabilities.

\subsection{Belnap--Dunn logic}
Belnap--Dunn logic ($\BD$) or first-degree entailment was introduced in~\cite{Belnap2019} as a~four-valued paraconsistent logic over the following language, interpreted on the lattice $\mathbf{4}$ (fig.~\ref{fig:BDsquare}).
\[\LBD:~\phi\coloneqq p\in\Prop\mid\neg\phi\mid(\phi\wedge\phi)\mid(\phi\vee\phi)\]
Recall that the values designate four kinds of information a~computer (or a~database) could have regarding a~given statement $\phi$, or what a~computer might be told regarding it:
\begin{itemize}
\item told \textbf{only} that $\phi$ is \textbf{true} --- $t$;
\item told \textbf{both} that $\phi$ is \textbf{true and} that it is \textbf{false} --- $b$;
\item told \textbf{neither} that $\phi$ is true nor that it is false --- $n$;
\item told \textbf{only} that $\phi$ is \textbf{false} --- $f$.
\end{itemize}
In this paper, we will present $\BD$ using an alternative semantics of $\BD$ using Kripke-style models. 
\begin{definition}[Belnap--Dunn models]
A \emph{Belnap--Dunn model} is a~tuple $\mathfrak{M}=\langle W,v^+,v^-\rangle$ with $W\neq\varnothing$ and $v^+,v^-:\Prop\rightarrow {\mathcal{P}}(W)$. 
\end{definition}
Here, each state of the model can be thought of as a~source of information. It is clear that regarding each statement, a~source can either say that it is true, false, say nothing at all, or provide a~contradictory account. This is formalised in the following definition.
\begin{definition}[Frame semantics for $\BD$]\label{def:BDframesemantics}
Let $\phi,\phi'\in\LBD$. For a~model $\mathfrak{M}=\langle W,v^+,v^-\rangle$, we define notions of $w\vDash^+\phi$ and $w\vDash^-\phi$ (\emph{positive and negative supports}, respectively) for $w\in W$ as follows.
\begin{align*}
w\vDash^+p&\text{ iff }w\in v^+(p)&w\vDash^-p&\text{ iff }w\in v^-(p)\\
w\vDash^+\neg\phi&\text{ iff }w\vDash^-\phi&w\vDash^-\neg\phi&\text{ iff }w\vDash^+\phi\\
w\vDash^+\phi\wedge\phi'&\text{ iff }w\vDash^+\phi\text{ and }w\vDash^+\phi'&w\vDash^-\phi\wedge\phi'&\text{ iff }w\vDash^-\phi\text{ or }w\vDash^-\phi'\\
w\vDash^+\phi\vee\phi'&\text{ iff }w\vDash^+\phi\text{ or }w\vDash^+\phi'&w\vDash^-\phi\vee\phi'&\text{ iff }w\vDash^-\phi\text{ and }w\vDash^-\phi'
\end{align*}
We denote the \emph{positive and negative interpretations} of a~formula as follows:
\begin{align*}
|\phi|^+\coloneqq\{w\in W\mid w\vDash^+\phi\}&&|\phi|^-\coloneqq\{w\in W\mid w\vDash^-\phi\}.
\end{align*}
$|\phi|^+$ (resp., $|\phi|^-$) can be interpreted as the set of the sources saying that the statement $\phi$ is true (resp., false).

We say that a~sequent $\phi\vdash\chi$ is \emph{valid on $\mathfrak{M}=\langle W,v^+,v^-\rangle$} (denoted, $\mathfrak{M}\models[\phi\vdash\chi]$) iff for any $w\in W$, it holds that:
\begin{itemize}
\item if $w\vDash^+\phi$, then $w\vDash^+\chi$ as well;
\item if $w\vDash^-\chi$, then $w\vDash^-\phi$ as well.
\end{itemize}
A sequent $\phi\vdash\chi$ is \emph{universally valid} iff it is valid on every model. In this case, we will say that $\phi$ \emph{entails} $\chi$.
\end{definition}
\begin{convention}
To facilitate the presentation, we define $\LIT=\Prop\cup\{\neg p:p\in\Prop\}$ and denote
\begin{align*}
\Var(\phi)&=\{p\in\Prop:p\text{ occurs in }\phi\}, 
&\LIT(\phi)&=\{l\in\LIT:l\text{ occurs in }\phi\}.
\end{align*}
We also denote $\Sf(\phi)$ the set of all subformulas of $\phi$.
\end{convention}
One can see that $\BD$ inherits truth and falsity conditions from the classical logic but treats them independently. In particular, $w$ claims that $\phi$ is false ($w\!\vDash^-\!\phi$) whenever it claims that its negation is true ($w\!\vDash^+\!\neg\phi$); likewise, a~conjunction is false when at least one of its members is false, etc. This is why there are no universally true, nor universally false formulas. Indeed, one can check that any $\phi(p_1,\ldots,p_n)$ is \emph{neither true nor false} at $w$ if $w\!\nvDash^+\!p_i$ and $w\!\nvDash^-\!p_i$ for all~$p_i$'s.

Furthermore, $\neg\phi\vee\chi$ cannot be treated as a~shorthand for implication. First, it lacks modus ponens: if both $\phi$ and $\neg\phi\vee\chi$ are true, it does not entail that $\chi$ is true as well (for if $\phi$ is both true and false but $\chi$ is not true, then $\neg\phi\vee\chi$ is still true). Second, the deduction theorem fails: even if $\phi$ entails $\chi$, the sequent $\neg\phi\vdash\chi$ is still not valid. In fact, even if $\phi\wedge\phi'$ entails $\chi$, then it is not always the case that $\phi$ entails $\neg\phi'\vee\chi$, for instance, $p\wedge q\vdash q$ is valid while $p\vdash q\vee\neg q$ is not. The converse fails as well: $p\vdash p\vee q$ is valid but $p\wedge\neg p\vdash q$ is not.
\begin{example}\label{example:BDsemantics}
Let us clarify which information corresponds to which truth value using the following example. Assume that we read an announcement about a~dog being lost by its owner.
\begin{quote}
\emph{A female golden retriever was lost on the 5th of October. The last time I saw her on the 7th of October, she had a~wide blue made of leather. Any finder is kindly requested to call +33625153633.}
\end{quote}
It is clear that it is \emph{true only} that the dog lost was a~golden retriever, and \emph{false only} that it was male. However, it is \emph{both true and false} that the owner lost her on the 5th of October: \emph{the announcement contradicts itself} saying that the owner saw the dog for the last time \emph{two days after} the loss. Furthermore, since there is \emph{no information} regarding the location where the dog was lost (or seen last), a~statement such as ‘the dog was lost near the city theatre’ would be \emph{neither true nor false}.
\end{example}
%
$\BD$ can be completely axiomatised\footnote{Alternative axiomatisations can be found in~\cite{OmoriWansing2017,Prenosil2018PhD,Shramko2021}.} using the following calculus $\Rfde$~\cite{Dunn2000}. 
\begin{align*}
\phi\wedge\chi&\vdash\phi&\phi&\vdash\phi\vee\chi\\
\phi\wedge\chi&\vdash\chi&\chi&\vdash\phi\vee\chi\\
\neg(\phi\wedge\chi)&\dashv\vdash\neg\phi\vee\neg\chi&\neg(\phi\vee\chi)&\dashv\vdash\neg\phi\wedge\neg\chi\\
\neg\neg\phi&\dashv\vdash\phi&\phi\wedge(\chi\vee\psi)&\vdash(\phi\wedge\chi)\vee(\phi\wedge\psi)
\end{align*}
\[\vee_e\dfrac{\phi\vdash\psi\quad\chi\vdash\psi}{\phi\vee\chi\vdash\psi}\quad\quad\quad\wedge_i\dfrac{\phi\vdash\chi\quad\phi\vdash\psi}{\phi\vdash\chi\wedge\psi}\]
\begin{theorem}\label{theorem:BDformulaformulacompleteness}
$\phi\vdash\chi$ is provable in $\Rfde$ iff it is valid.
\end{theorem}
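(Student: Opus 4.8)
The plan is to establish soundness and completeness separately for the sequent calculus $\Rfde$ with respect to the frame semantics of Definition~\ref{def:BDframesemantics}. For soundness, I would proceed by induction on the length of a derivation of $\phi\vdash\chi$. It suffices to verify that each axiom is universally valid and that each of the two rules $\vee_e$ and $\wedge_i$ preserves universal validity. Checking the axioms is routine: for each axiom one unfolds the positive and negative support conditions for the principal connectives and confirms, at an arbitrary world $w$ of an arbitrary model, both the truth-preservation clause ($w\vDash^+\phi \Rightarrow w\vDash^+\chi$) and the falsity-preservation clause ($w\vDash^-\chi \Rightarrow w\vDash^-\phi$). The De Morgan and double-negation axioms follow directly from the clauses for $w\vDash^\pm\neg\phi$, and the distributivity axiom from the clauses for $\wedge$ and $\vee$. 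For the rules, one checks that the two-sided validity of the premises transfers to the conclusion, again working pointwise at each world.

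The substantive direction is completeness: every universally valid sequent is provable. Here the standard approach for $\BD$/first-degree entailment is to reduce to a normal form and then argue combinatorially. Concretely, I would first show, using the provable De Morgan laws and double-negation elimination, that every formula $\phi$ is $\Rfde$-provably equivalent (i.e.\ $\phi\dashv\vdash$) to a formula in negation normal form, where negations apply only to variables; then, using the distributivity axiom together with the lattice axioms, that every formula is provably equivalent to one in disjunctive normal form, a disjunction of conjunctions of literals from $\LIT$. Since $\Rfde$ proves that $\dashv\vdash$ is a congruence compatible with $\vdash$, it is enough to prove the claim for sequents $D \vdash D'$ where $D$ is in DNF and $D'$ is in conjunctive normal form (a conjunction of disjunctions of literals).

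The crux is then a literal-level criterion. Using $\vee_e$ on the left and $\wedge_i$ on the right, provability of $D\vdash D'$ reduces to provability of each sequent $C \vdash E$ where $C=\bigwedge_i l_i$ is a single conjunctive clause of $D$ and $E=\bigvee_j m_j$ is a single disjunctive clause of $D'$. The key combinatorial lemma I would prove is that such an elementary sequent $C\vdash E$ is valid if and only if some literal $l_i$ appearing in $C$ also appears in $E$ (that is, the clauses share a literal). The forward implication of this lemma is where the argument has real content and is the main obstacle: given that $C$ and $E$ share no literal, I must construct a single Belnap--Dunn model with a world $w$ witnessing failure of validity. The natural construction is to set, at a single world $w$, the support facts $w\vDash^+ p$ and $w\vDash^- p$ precisely so that every literal of $C$ is positively supported while every literal of $E$ fails to be positively supported (and dually for the falsity clause); because $\mathbf{4}$ allows $p$ to be assigned any of the four values independently, the no-shared-literal hypothesis guarantees these demands are consistent, yielding the desired countermodel. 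Once the lemma is in hand, the converse---that a shared literal makes $C\vdash E$ provable---follows from the $\wedge$-elimination and $\vee$-introduction axioms together with transitivity (cut) derived from the rules, completing the completeness proof.
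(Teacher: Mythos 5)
Your proposal is correct in outline, but it takes a genuinely different route from the paper. The paper does not prove completeness of $\Rfde$ from scratch: it invokes Dunn's theorem (\cite{Dunn2000}) that $\Rfde$ is complete with respect to the four-valued matrix semantics on $\mathbf{4}$, and then only proves a bridge lemma --- a sequent fails under some $\mathbf{4}$-valuation iff it fails in some frame model --- by observing that a single-world model $\langle\{w\},v^+,v^-\rangle$ encodes exactly a $\mathbf{4}$-valuation (via $w\in v^+(p)$ iff $v(p)\in\{t,b\}$ and $w\in v^-(p)$ iff $v(p)\in\{f,b\}$, plus an induction on formulas). You instead re-prove the completeness theorem itself: soundness by induction on derivations, and completeness via reduction to DNF/CNF and the shared-literal criterion for elementary sequents $C\vdash E$. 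Note that your single-world countermodel for a literal-disjoint pair is essentially the same observation the paper uses as its bridge: one world with independent choices of $v^+(p)$ and $v^-(p)$ \emph{is} a $\mathbf{4}$-valuation. The trade-off is clear: the paper's argument is short and modular because the proof-theoretically hard half is outsourced to \cite{Dunn2000}, whereas yours is self-contained and yields extra information (a normal-form theorem and, via the shared-literal test, a decision procedure).

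Two caveats, neither fatal but both needing repair. First, your claim that transitivity (cut) can be ``derived from the rules'' is false: from the listed axioms together with $\vee_e$ and $\wedge_i$ alone one cannot even derive $p\vdash p$ (every axiom and every rule conclusion has a compound formula in a fixed position), let alone cut. In Dunn's formulation of $\Rfde$, reflexivity and transitivity are primitive rules, and your proof must take them as such; the paper's listing silently omits them, which it can afford only because its proof never manipulates derivations. Second, the congruence claim for $\dashv\vdash$ must be handled with care around negation, since contraposition is not a rule of the system; the standard fix is a simultaneous induction establishing $\phi\dashv\vdash\mathrm{NNF}(\phi)$ and $\neg\phi\dashv\vdash\mathrm{NNF}(\neg\phi)$ together, which is precisely what the two-directional De Morgan and double-negation axioms make possible. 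With these two points fixed, your argument goes through.
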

\begin{proof}
From~\cite{Dunn2000}, we know that $\Rfde$ is complete w.r.t. the four-valued semantics. We show that $\phi\vdash\chi$ is valid on $\mathbf{4}$ iff it is valid w.r.t.\ the frame semantics. Indeed, assume, $v:\Prop\rightarrow\mathbf{4}$ invalidates $\phi\vdash\chi$, i.e., $v(\phi)\not\leq_\mathbf{4}v(\chi)$. We let $W=\{w\}$ and construct $v^+,v^-:\Prop\rightarrow\{\varnothing,\{w\}\}$ as follows.
\begin{align}
w\in v^+(p)&\text{ iff }v(p)\in\{t,b\}\nonumber\\
w\in v^-(p)&\text{ iff }v(p)\in\{f,b\}\label{equ:twomodels}
\end{align}
Now, it is easy to show by induction on $\psi$, that
\begin{align*}
w\in v^+(\psi)&\text{ iff }v(\psi)\in\{t,b\}\tag{for any $\psi$}\\
w\in v^-(\psi)&\text{ iff }v(\psi)\in\{f,b\}\tag{for any $\psi$}
\end{align*}
But then it is clear that if $v(\phi)\not\leq_\mathbf{4}v(\chi)$, then $\mathfrak{M}_\mathbf{4}\not\models[\phi\vdash\chi]$.

For the converse, we have two options, either (i) there is $w\in\mathfrak{M}$ s.t.\ $w\vDash^+\phi$ and $w\nvDash^+\chi$, or (ii) there is $w'\in\mathfrak{M}$ s.t.\ $w'\vDash^-\chi$ and $w'\nvDash^-\phi$. Again, we use~\eqref{equ:twomodels} to construct $v$ on $\mathbf{4}$ but now, using $v^+$ and $v^-$. The result now follows.
\end{proof}
\subsection{Paraconsistent expansions of $\biG$}
To treat support of truth and support of falsity independently, we expand the language of $\biG$ with De Morgan negation $\neg$. This makes the interpretation of formulas two-dimensional: the first coordinate stands for the support of truth and the second one for the support of falsity. In a~sense, $\Gsquare$ is a~G\"{o}del logic interpreted on the twist-product algebra $[0,1]^{\Join}$ (recall fig.~\ref{fig:NSProb}). While De Morgan negations of $\wedge$ and $\vee$ are unambiguous, there are several options for treating the De Morgan negation of $\rightarrow$~\cite{Wansing2008}.

We choose two of them: (1) an intuitive option, ‘implication is false when its antecedent is true and consequent is false’ (i.e., the way we disprove classical implication), this is the way it is done in Nelson's logic~\cite{Nelson1949} (we denote this implication $\weakrightarrow$); and (2) via co-implication as it is done by Moisil\footnote{This logic was introduced several times: by Wansing~\cite{Wansing2008} as $\mathsf{I}_4\mathsf{C}_4$ and then by Leitgeb~\cite{Leitgeb2019} as HYPE. Cf.~\cite{OdintsovWansing2021} for a~recent and more detailed discussion.} in~\cite{Moisil1942} (this implication will be denoted $\rightarrow$). The falsity conditions of co-implications ($\coimplies$ and $\weakcoimplies$) are dual to those of implications ($\rightarrow$ and~$\weakrightarrow$).

The second versions of implication and co-implication produce a~completely self-dual logic.
\begin{definition}[Language and semantics of $\Gsquare$]
\label{def:semantics:G2}
We fix a~countable set $\Prop$ of propositional letters and consider the following language:
\[\phi\coloneqq p\in\Prop\mid\neg\phi\mid(\phi\wedge\phi)\mid(\phi\vee\phi)\mid(\phi\rightarrow\phi)\mid(\phi\Yleft\phi)\mid(\phi\weakrightarrow\phi)\mid(\phi\weakcoimplies\phi)\]
Let $e_1,e_2:\mathsf{Prop}\rightarrow [0,1]$. We extend them as follows.
\begin{longtable}{rclrcl}
$e_1(\neg\phi)$&$=$&$e_2(\phi)$&$e_2(\neg\phi)$&$=$&$e_1(\phi)$\\
$e_1(\phi\wedge\phi')$&$=$&$e_1(\phi)\wedge_\mathsf{G}e_1(\phi')$&$e_2(\phi\wedge\phi')$&$=$&$e_2(\phi)\vee_\mathsf{G}e_2(\phi')$\\
$e_1(\phi\vee\phi')$&$=$&$e_1(\phi)\vee_\mathsf{G}e_1(\phi')$&$e_2(\phi\vee\phi')$&$=$&$e_2(\phi)\wedge_\mathsf{G}e_2(\phi')$\\
$e_1(\phi\rightarrow\phi')$&$=$&$e_1(\phi)\!\Grightarrow\!e_1(\phi')$&$e_2(\phi\rightarrow\phi')$&$=$&$e_2(\phi')\Gcoimplies e_2(\phi)$\\
$e_1(\phi\coimplies\phi')$&$=$&$e_1(\phi)\Gcoimplies e_1(\phi')$&$e_2(\phi\coimplies\phi')$&$=$&$e_2(\phi')\!\Grightarrow\!e_2(\phi)$\\
$e_1(\phi\weakrightarrow\phi')$&$=$&$e_1(\phi)\!\Grightarrow\!e_1(\phi')$&$e_2(\phi\weakrightarrow\phi')$&$=$&$e_1(\phi)\wedge_\mathsf{G}e_2(\phi')$\\
$e_1(\phi\weakcoimplies\phi')$&$=$&$e_1(\phi)\Gcoimplies e_1(\phi')$&$e_2(\phi\weakcoimplies\phi')$&$=$&$e_2(\phi)\!\vee_\mathsf{G}\!e_1(\phi')$
\end{longtable}
\end{definition}
We will consider two separate logics: $\Gsquareorder$ and $\GsquareNelson$ and their respective languages $\LGsquareorder$ and $\LGsquareNelson$ which have only one set of (co-)implications indicated in the brackets.
\begin{convention}
In what follows, we will use several shorthands ($p$ is a~fixed fresh variable).
\begin{align*}
\toporder&\coloneqq p\rightarrow p&\topNelson&\coloneqq p\weakrightarrow p\\
\botorder&\coloneqq p\coimplies p&\botNelson&\coloneqq p\weakcoimplies p\\
{\sim}_\mathbf{0}p&\coloneqq p\rightarrow\botorder&{\sim}_\mathsf{N}p&\coloneqq p\weakrightarrow\botNelson
\end{align*}
\end{convention}

We define entailments in $\Gsquare$ as follows.
\begin{definition}[$\Gsquare$ entailments]\label{def:G2entailments}
Let $\Gamma\cup\{\phi\}\subseteq\LGsquareorder$ and $\Delta\cup\{\chi\}\subseteq\LGsquareNelson$. We define:
\begin{align*}
\Gamma\models_{\Gsquareorder}\phi&\text{ iff }\forall e_1,e_2:\inf\{e_1(\psi):\psi\in\Gamma\}\leq e_1(\phi)\text{ and }\sup\{e_2(\psi):\psi\in\Gamma\}\geq e_2(\phi)\\
\Delta\models_{\GsquareNelson}\chi&\text{ iff }\forall e_1:\inf\{e_1(\psi):\psi\in\Delta\}\leq e_1(\chi)
\end{align*}
\end{definition}
As we see, $\GsquareNelson$ entailment considers only the first coordinate on $[0,1]^{\Join}$ of the valuation while $\Gsquareorder$ takes into consideration both of them\footnote{There is a~technical reason for such definition as well: notice that if $v(p)=(1,1)$, then $v(p\weakrightarrow p)=(1,1)$ as well. Thus, theorems of $\GsquareNelson$ are not always evaluated at $(1,0)$.}.
\begin{convention}
In what follows, we will write $e$ as a~shorthand of $(e_1,e_2)$ and $e(\phi)$ instead of $(e_1(\phi),e_2(\phi))$ when there is no risk of confusion.
\end{convention}

Note that both $\Gsquare$ entailments are paraconsistent in the following sense.
\begin{proposition}\label{prop:Gsquareparaconsistent}~
\begin{enumerate}
\item Let $\phi\in\LGsquareorder$ be \emph{non-valid}. Then there is $\chi\in\LGsquareorder$ s.t.\ $\phi,\neg\phi\not\models_{\Gsquareorder}\chi$.
\item Let $\phi\in\LGsquareNelson$ and let there be $e_1$ and $e_2$ s.t.\ $e_1(\phi)=1$ and $e_2(\phi)=1$. Then there is $\chi\in\LGsquareNelson$ s.t.\ $\phi,\neg\phi\not\models_{\GsquareNelson}\chi$.
\end{enumerate}
\end{proposition}
\begin{proof}
We prove the first statement. The second can be obtained in the same way. Let $q\notin\Var(\phi)=0$ and set $\chi=q$. Now let $e$ be the valuation s.t.\ $e(\phi)\neq(1,0)$ and $e(q)=(0,1)$. It is clear that $e$ refutes the entailment.
\end{proof}
\begin{remark}\label{rem:constants}
Note that $\mathbf{0}$ and $\mathbf{1}$ s.t.\ $e(\mathbf{1})=(1,0)$ and $e(\mathbf{0})=(0,1)$ are definable.
\begin{align*}
\LGsquareNelson:&&\mathbf{0}&\coloneqq\topNelson\weakcoimplies\topNelson&\LGsquareorder:&&\mathbf{0}&\coloneqq p\coimplies p\\
&&\mathbf{1}&\coloneqq\mathbf{0}\weakrightarrow\mathbf{0}&&&\mathbf{1}&\coloneqq p\rightarrow p
\end{align*}
Note that the definitions of $\mathbf{1}$ and $\mathbf{0}$ in $\LGsquareorder$ coincide with $\toporder$ and $\botorder$ which is not the case for $\LGsquareNelson$. When there is no risk of confusion, we will drop the subscripts and write simply $\bot$ or $\top$.

It is also straightforward to verify that $p\weakrightarrow p$ \emph{is not a~constant}. Thus, the standard definition of $\triangle$ cannot be transferred to $\GsquareNelson$ which prevents its intuitive axiomatisation. On the other hand, $\weakcoimplies$ obeys the expected De Morgan law that is dual to that of $\weakrightarrow$ (cf.~Definition~\ref{def:HGsquare}).
\end{remark}
In the remainder of this subsection, we are going to present Hilbert calculi for $\GsquareNelson$ and $\Gsquareorder$ and prove their completeness. The calculi are straightforward expansions of $\HbiG$ with De Morgan axioms.
\begin{definition}[$\HGsquareorder$ and $\HGsquareNelson$]\label{def:HGsquare}
To obtain $\HGsquareorder$, we add the following axiom schemas and rules (below, $\phi\leftrightarrow\chi$ is a~shorthand for $(\phi\rightarrow\chi)\wedge(\chi\rightarrow\phi)$):
\begin{description}
\item[$\mathsf{neg}$.] $\neg\neg\phi\leftrightarrow\phi$
\item[$\mathsf{DeM}\wedge$.] $\neg(\phi\wedge\chi)\leftrightarrow(\neg\phi\vee\neg\chi)$
\item[$\mathsf{DeM}\vee$.] $\neg(\phi\vee\chi)\leftrightarrow(\neg\phi\wedge\neg\chi)$
\item[$\mathsf{DeM}\!\rightarrow$.] $\neg(\phi\rightarrow\chi)\leftrightarrow(\neg\chi\coimplies\neg\phi)$
\item[$\mathsf{DeM}\coimplies$.] $\neg(\phi\coimplies\chi)\leftrightarrow(\neg\chi\rightarrow\neg\phi)$
\end{description}
For $\HGsquareNelson$, we replace $\rightarrow$ with $\weakrightarrow$ and $\coimplies$ with $\weakcoimplies$. We also add $\mathsf{neg}$ and De Morgan laws for $\wedge$ and $\vee$ (with $\weakleftrightarrow$ instead of $\leftrightarrow$). Finally, the De Morgan laws for (co-)implication are as follows.
\begin{description}
\item[$\mathsf{DeM}\!\weakrightarrow$.] $\neg(\phi\weakrightarrow\chi)\weakleftrightarrow(\phi\wedge\neg\chi)$
\item[$\mathsf{DeM}\!\weakcoimplies$.] $\neg(\phi\weakcoimplies\chi)\weakleftrightarrow(\neg\phi\vee\chi)$
\end{description}
\end{definition}
Note that $\HGsquareorder$ and $\HGsquareNelson$ extend $\mathsf{I}_4\mathsf{C}_4$ and $\mathsf{I}_1\mathsf{C}_1$ (Nelson's logic with co-im\-pli\-ca\-tion) from~\cite{Wansing2008} with prelinearity axioms.

Completeness of both calculi w.r.t.\ their algebraic semantics can be established in the following sense.
\begin{theorem}\label{theorem:HG2algebraiccompleteness}
$\HGsquareorder$ and $\HGsquareNelson$ are strongly complete:
\begin{align*}
\Gamma\vdash_{\HGsquareorder}\phi&\text{ iff }\Gamma\models_{\Gsquareorder}\phi&\Gamma\vdash_{\HGsquareNelson}\phi&\text{ iff }\Gamma\models_{\GsquareNelson}\phi
\end{align*} 
\end{theorem}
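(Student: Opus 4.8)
The plan is to verify soundness by a direct induction and then obtain completeness by reducing it to the completeness of $\HbiG$, which we may assume. For soundness one checks that each axiom of $\HGsquareorder$ (resp.\ $\HGsquareNelson$) evaluates to the designated value under every $e=(e_1,e_2)$ and that $\mathsf{MP}$ and $\mathsf{nec}$ preserve the entailments of Definition~\ref{def:G2entailments}; this is routine because, by Definition~\ref{def:semantics:G2}, the De~Morgan negation only swaps the two coordinates while all the remaining connectives act coordinatewise through the bi-Gödel operations, so the prelinearity and $\HHB$-axioms hold in each coordinate and the De~Morgan axioms merely express the coordinate swap. The real work is completeness, and the guiding idea is that $[0,1]^{\Join}$ is the twist product of the bi-Gödel algebra $[0,1]_\mathsf{G}$ with itself.

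Concretely, I would introduce for each $p\in\Prop$ two fresh variables $p_1,p_2$ and define a coordinate-splitting translation $\phi\mapsto(\phi^1,\phi^2)$ into $\LbiG$ by recursion mirroring Definition~\ref{def:semantics:G2}: for instance $(\neg\phi)^1=\phi^2$ and $(\neg\phi)^2=\phi^1$; $(\phi\wedge\chi)^1=\phi^1\wedge\chi^1$ and $(\phi\wedge\chi)^2=\phi^2\vee\chi^2$; $(\phi\rightarrow\chi)^1=\phi^1\Grightarrow\chi^1$ and $(\phi\rightarrow\chi)^2=\chi^2\Gcoimplies\phi^2$; and analogously for $\coimplies,\weakrightarrow,\weakcoimplies$. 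A straightforward induction yields the semantic factorisation lemma: for every $\Gsquare$-valuation $e$ the bi-Gödel valuation $\hat e$ with $\hat e(p_1)=e_1(p)$, $\hat e(p_2)=e_2(p)$ satisfies $e_1(\phi)=\hat e(\phi^1)$ and $e_2(\phi)=\hat e(\phi^2)$, and every $\hat e$ on $\{p_1,p_2:p\in\Prop\}$ arises this way. Under this correspondence $\Gamma\models_{\GsquareNelson}\phi$ is literally $\{\psi^1:\psi\in\Gamma\}\models_{\biG}\phi^1$, while $\Gamma\models_{\Gsquareorder}\phi$ splits into that same first-coordinate entailment together with the order-dual condition $\sup\{\hat e(\psi^2):\psi\in\Gamma\}\geq\hat e(\phi^2)$ on the negative parts; since $\biG$ is self-dual (with $\Gcoimplies$ dual to $\Grightarrow$ and $\wedge$ to $\vee$), this dual condition is again a $\biG$-entailment, so both semantic conditions are captured by $\models_{\biG}$.

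It remains to match this reduction with derivability. By soundness, the factorisation lemma, and the assumed completeness of $\HbiG$, any sequent derivable in $\HGsquareorder$ (resp.\ $\HGsquareNelson$) yields the corresponding $\biG$-entailment of the translated parts, hence their $\HbiG$-derivability; this is the easy direction. For the converse I would prove a reconstruction lemma: each calculus proves every $\phi$ equivalent to the formula rebuilt from $(\phi^1,\phi^2)$ through the twist structure, and can therefore simulate step by step any $\HbiG$-derivation performed on the coordinate translations. The $\mathsf{neg}$ axiom and the De~Morgan laws of Definition~\ref{def:HGsquare} are exactly what is needed to drive negations down to the variables and to identify each connective's two coordinates with their $\HbiG$ counterparts. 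Chaining the reconstruction lemma, the completeness of $\HbiG$, and the factorisation lemma then gives $\Gamma\vdash_{\HGsquareorder}\phi$ iff $\Gamma\models_{\Gsquareorder}\phi$, and likewise $\Gamma\vdash_{\HGsquareNelson}\phi$ iff $\Gamma\models_{\GsquareNelson}\phi$.

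The step I expect to be the main obstacle is this reconstruction lemma, and within it the treatment of the weak connectives of $\GsquareNelson$. Whereas $\rightarrow$ and $\coimplies$ keep the two coordinates decoupled, the falsity coordinate of $\weakrightarrow$ (and dually $\weakcoimplies$) is computed from the \emph{truth} value of one subformula and the \emph{falsity} value of the other, so the negative translation of a weak implication genuinely mixes $p_1$- and $p_2$-parts; proving that $\HGsquareNelson$ internalises this split relies essentially on $\mathsf{DeM}{\weakrightarrow}$ and demands care that the coupled coordinates remain independently manipulable inside the calculus. A second delicate point, needed for \emph{strong} (rather than merely finitary) completeness, is that the reduction must preserve consequence from infinite $\Gamma$: this requires that the assumed completeness of $\HbiG$ holds in the strong, real-valued sense --- equivalently, that the strong standard completeness of Gödel logic lifts to $\biG$ and through the twist product to $[0,1]^{\Join}$, so that the infima and suprema over $\Gamma$ in Definition~\ref{def:G2entailments} are respected.
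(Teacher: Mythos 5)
Your proposal is correct in outline, but it follows a genuinely different route from the paper's. The paper works model-theoretically: it introduces a Kripke-style frame semantics for $\Gsquare$ in the appendix, proves frame completeness of the two calculi (theorem~\ref{theorem:HG2orderKripkecompleteness}) by passing to negation normal form --- replacing each literal $\neg p$ by a fresh variable $p^*$ and invoking Wansing's lemma that $\phi$ is falsifiable on a $\Gsquare$ model iff $\phi^*$ is falsifiable on a bi-intuitionistic model over the same frame --- and then transfers countermodels between valuations on $[0,1]^{\Join}$ and frame models via the counterpart lemmas (lemmas~\ref{lemma:algebratomodel} and~\ref{lemma:modeltoalgebra}); for $\Gsquareorder$ it additionally cites an external proposition guaranteeing that a failed entailment is always witnessed in the first coordinate. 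Your coordinate-splitting translation is the proof-theoretic incarnation of the very same NNF idea: rather than interpreting the split formulas on frames, you simulate $\HbiG$-derivations of the translations inside $\HGsquareorder$ and $\HGsquareNelson$ and pull them back through your reconstruction lemma (note that the pullback of $\phi^1$ under $p_1\mapsto p$, $p_2\mapsto\neg p$ is exactly the negation normal form of $\phi$). What your route buys: no frame semantics is needed, and the cited first-coordinate proposition becomes superfluous, since the first-coordinate condition is already part of definition~\ref{def:G2entailments}, so simulating the first-coordinate derivation alone yields $\Gamma\vdash\phi$; the cost is that you must assume strong (infinite-premise) standard completeness of $\HbiG$, which the paper in any case relies on elsewhere (e.g.\ in theorem~\ref{theorem:HQGcompleteness}). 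On the obstacle you flag: the reconstruction lemma for $\HGsquareNelson$ indeed cannot proceed by naive replacement of equivalents, since $\weakleftrightarrow$ pins down only the first coordinate and is not a congruence under $\neg$; but it goes through by a simultaneous induction establishing $\vdash\phi\weakleftrightarrow\mathrm{NNF}(\phi)$ and $\vdash\neg\phi\weakleftrightarrow\mathrm{NNF}(\neg\phi)$ together, so that $\mathsf{neg}$, $\mathsf{DeM}{\wedge}$, $\mathsf{DeM}{\vee}$, $\mathsf{DeM}{\weakrightarrow}$ and $\mathsf{DeM}{\weakcoimplies}$ discharge all $\neg$-cases and congruence is only ever invoked in the negation-free fragment, where it is ordinary bi-G\"{o}del reasoning. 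With that refinement, your plan is sound.
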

\begin{proof}
In the appendix.
\end{proof}
\subsection{Language and semantics of $\MCB$ and $\NMCB$}\label{subsec:MCBNMCBlanguagesemantics}
We will treat each atomic modal formula $\Crtn\phi$ (read ‘the agent is certain in the truth of $\phi$’) as a~piece of evidence. The first coordinate supports one party, and the second the other. Pieces of evidence can be combined in different fashions: we can compare our certainty therein using (co-)implication; choose the more or less certain one with $\vee$ and $\wedge$, etc. G\"{o}delian negation represents the countering of a~given statement --- we show that it is absurd. Finally, $\neg$ is the operator that swaps the support of truth and the support of falsity. But in the context of a~court session, if a~statement is used as an argument for one party, then its negation is actually an argument for the other. Thus, we posit that $\neg\Crtn\phi$ is equivalent to $\Crtn\neg\phi$.

Furthermore, there is a~difference between criminal and civil proceedings (as well as arbitrations). Namely, during a~criminal proceeding, a~defendant is pronounced innocent as long as they were able to present conclusive evidence in their favour or counter the evidence of the prosecution. Furthermore, contradictions are usually interpreted in favour of the defendant. Thus, the two parties in the proceeding are not equal in this respect. On the other hand, both parties in a~civil court (say, two relatives settle an inheritance dispute in court) or an arbitration present evidence in their own favour, after which the court determines whose evidence was more compelling.

This difference can be formalised if we recall two $\Gsquare$ logics and their entailments. $\Gsquareorder$ takes into account \emph{both} coordinates of a~given valuation which makes it closer to the reasoning demonstrated in a~civil process. On the other hand, $\GsquareNelson$ takes into account only the first coordinate. Thus, we can associate each coordinate to a~party of the process (for $\GsquareNelson$, the first coordinate stands for the defence, and the second one for the prosecution).

Now, if $\Gamma\cup\{\phi\}$ is a~set of statements concerning some evidence, then entailment relations can be interpreted as preservation of the degree of certainty from premises to the conclusion. We will call $\Crtn$ the modality of ‘monotone comparative belief’. Here, ‘monotone’ means that $\Crtn$ conforms to the underlying $\BD$ entailment in the sense that if $\phi\vdash\chi$ is valid in $\BD$, $\Crtn\phi$ implies $\Crtn\chi$ in the outer-layer logic. ‘Comparative’ relates to the fact that we use G\"{o}del logic which can be thought of as a~logic of comparative truth.

In this section, we provide two logics of monotone comparative belief based on $\Gsquare$: $\MCB=\langle\BD,\Crtn,\Gsquareorder\rangle$ (stands for ‘monotone comparative belief’) and the Nelson-like $\MCB$ designated $\NMCB=\langle\BD,\Crtn,\GsquareNelson\rangle$.
\begin{definition}[Languages of $\MCB$ and $\NMCB$]\label{def:LMCBandLNMCB}
The languages of $\MCB$ and $\NMCB$ ($\LMCB$ and $\LNMCB$, respectively) are defined via the following grammars.
\[\LMCB:~\alpha\coloneqq\Crtn\phi\mid\neg\alpha\mid\alpha\circ\alpha~(\circ\in\{\wedge,\vee,\rightarrow,\coimplies\},\phi\in\LBD)\]
\[\LNMCB:~\alpha\coloneqq\Crtn\phi\mid\neg\alpha\mid\alpha\circ\alpha~(\circ\in\{\wedge,\vee,\weakrightarrow,\weakcoimplies\}, \phi\in\LBD)\]
\end{definition}

Recall once again that $\pi:2^W\rightarrow[0,1]$ is an uncertainty measure on $W$ iff $\pi(X)\leq\pi(Y)$ for all $X,Y\subseteq W$ s.t.\ $X\subseteq Y$ and $\pi(W)>\pi(\varnothing)$.
\begin{definition}[Semantics of $\MCB$ and $\NMCB$]\label{def:MCBandNMCBsemantics}
An $\MCB$ ($\NMCB$) model is a~tuple $\mathscr{M}\!=\!\langle W,v^+\!,v^-\!,\pi,e_1,e_2\rangle$ with $\langle W,v^+,v^-\rangle$ being a~$\BD$ model (cf.~Definition~\ref{def:BDframesemantics}), $\pi:2^W\rightarrow[0,1]$ being an uncertainty measure.

Semantic conditions of atomic $\LMCB$ and $\LNMCB$ formulas are as follows.
\begin{align*}
e_1(\Crtn\phi)&=\pi\left(\{w:w\vDash^+\phi\}\right)=\pi(|\phi|^+)\\
e_2(\Crtn\phi)&=\pi\left(\{w:w\vDash^-\phi\}\right)=\pi(|\phi|^-)
\end{align*}
Values of complex formulas are computed according to Definition~\ref{def:semantics:G2}.

For a~frame $\mathbb{F}=\langle W,\pi\rangle$ on an $\MCB$ ($\NMCB$) model $\mathscr{M}$, we say that $\alpha\in\LMCB$ ($\beta\in\LNMCB$) is valid on $\mathbb{F}$ ($\mathbb{F}\models\alpha$ and $\mathbb{F}\models\beta$, respectively) iff $e(\alpha)\!=\!(1,0)$ ($e_1(\beta)\!=\!1$) for every $e_1$ and $e_2$ on~$\mathbb{F}$. Finally, for $\Psi\cup\{\alpha\}\subseteq\LMCB$ and $\Omega\cup\{\beta\}\subseteq\LNMCB$, we define the same entailment relations as in Definition~\ref{def:G2entailments}.
\end{definition}
\begin{convention}
In what follows, we will use $\strongarrow$ and $\biarrow$ --- congruential versions of $\weakrightarrow$ and $\leftrightarrowtriangle$ --- defined as follows:
\begin{align*}
\alpha\strongarrow\beta&\coloneqq(\alpha\weakrightarrow\beta)\wedge(\neg\beta\weakrightarrow\neg\alpha)\\
\alpha\biarrow\beta&\coloneqq(\alpha\strongarrow\beta)\wedge(\beta\strongarrow\alpha)
\end{align*}
One can check that
\begin{align*}
e_1(\alpha\strongarrow\beta)=1\text{ iff }e_1(\alpha)\leq e_1(\beta)\text{ and }e_2(\alpha)\geq e_2(\beta)\\
e_1(\alpha\biarrow\beta)=1\text{ iff }e_1(\alpha)=e_1(\beta)\text{ and }e_2(\alpha)=e_2(\beta)
\end{align*}
\end{convention}

Note that there is a~very important difference between $\MCB$ and $\NMCB$ on one hand and $\QG$ on the other hand. Namely, in $\QG$ an agent can compare their beliefs in any two given statements. This, however, is not the case in $\MCB$ and $\NMCB$.

To see this, we define two operators for $\alpha\in\LMCB$ and $\beta\in\LNMCB$.
\begin{align*}
\triangleorder\alpha&\coloneqq{\sim}_\mathbf{0}(\toporder\coimplies\alpha)\wedge\neg{\sim_\mathbf{0}\sim_\mathbf{0}}(\toporder\coimplies\alpha)&\triangleNelson\beta&\coloneqq{\sim}_\mathsf{N}(\topNelson\weakcoimplies\beta)
\end{align*}
One can check that
\begin{align*}
e(\triangleorder\alpha)&=
\begin{cases}
(1,0)&\text{iff }e(\alpha)=(1,0)\\
(0,1)&\text{otherwise}
\end{cases}
&
e_1(\triangleNelson\beta)&=
\begin{cases}
1&\text{iff }e_1(\beta)=1\\
0&\text{otherwise}
\end{cases}
\end{align*}
Thus, in contrast to $\triangle(\alpha\rightarrow\alpha')\vee\triangle(\alpha'\rightarrow\alpha)$ that is $\QG$-valid, neither $\triangleorder(\alpha\rightarrow\alpha')\vee\triangleorder(\alpha'\rightarrow\alpha)$ nor $\triangleNelson(\alpha\strongarrow\alpha')\vee\triangleNelson(\alpha'\strongarrow\alpha)$ are valid in $\MCB$ and $\NMCB$, respectively.

Intuitively, this failure of comparability is justified. First, $\alpha$ and $\alpha'$ can be irrelevant to one another. Indeed, we cannot always answer conclusively what we consider more likely: that it will rain tomorrow or that we will find our lost dog. Second, even if the events are related, we are not necessarily able to compare our confidence in them when the evidence is of different nature.

Recall the situation of Paula and Quinn claiming that the dog is theirs. Assume now that Paula shows a~photo of her with the dog on the leash and Quinn shows the (same or at least very similar) leash. Neither piece of evidence is conclusive and without further investigation, it might not be clear whether one is stronger than the other.

Third, if the events are described \emph{classically} (as done in $\QP$, $\QG$, and $\QPG$), then all contradictory events have measure $0$ (or the least possible positive measure). However, if an agent tries to align their beliefs with what they are told by their sources, this is not necessarily the case. Indeed, if I do not have any information at all regarding $p$, then $\pi(|p|^+)=0$ and $\pi(|p|^-)=0$, whence $e(\Crtn(p\wedge\neg p))=(0,0)$. On the other hand, if I have somewhat reliable sources claiming that $q$ is true and some others (less trusted ones) that it is false, then I can posit that $\pi(|q|^+)=0.5$ and $\pi(|q|^-)=0.3$\footnote{Recall, that both in $\biG$ and $\Gsquare$, the exact numbers assigned to our certainty in a~given event are of little importance. What matters is that (in this case) I have some information that suggests that $q$ is true and some that it is false.}, whence $e(\Crtn(q\wedge\neg q))=(0.3,0.5)$. But then my certainty in $p\wedge\neg p$ is incomparable to that in $q\wedge\neg q$. Finally, if \emph{I know for certain} that $r$ is true (i.e., $\pi(|r|^+)=1$ and $\pi(|r|^-)=0$), then $e(\Crtn(r\wedge\neg r))=(0,1)$. Thus my certainty in $r\wedge\neg r$ \emph{is strictly below} that in both $p\wedge\neg p$ and $q\wedge\neg q$.
\subsection{Axiomatisation}
Let us now introduce Hilbert-style calculi for $\MCB$ and $\NMCB$.
\begin{definition}[$\HMCB$]
The calculus consists of the following axioms and rules ($\phi,\chi\in\LBD$ and $\alpha,\beta\in\LMCB$).
\begin{description}
\item[$\HMCB_{\BD}$] $\Crtn\phi\rightarrow\Crtn\chi$ for any $\phi,\chi\in\LBD$ s.t.~$\phi\vdash\chi$ is $\BD$ valid.
\item[$\HMCB_\neg$] $\Crtn\neg\phi\leftrightarrow\neg\Crtn\phi$.
\item[$\Gsquareorder$] all theorems and rules of $\HGsquareorder$ instantiated with $\MCB$ formulas.
\end{description}
\end{definition}
\begin{definition}[$\HNMCB$]
The calculus consists of the following axioms and rules (below, $\phi,\chi\in\LBD$ and $\alpha,\beta\in\LNMCB$).
\begin{description}
\item[$\HNMCB_{\BD}$ ] $\Crtn\phi\strongarrow\Crtn\chi$ for any $\phi,\chi\in\LBD$ s.t.~$\phi\vdash\chi$ is $\BD$ valid.
\item[$\HNMCB_\neg$] $\Crtn\neg\phi\biarrow\neg\Crtn\phi$.
\item[$\GsquareNelson$] all theorems and rules of $\HGsquareNelson$ instantiated with $\NMCB$ formulas.
\end{description}
\end{definition}
As expected, $\HMCB_{\BD}$ and $\HNMCB_{\BD}$ correspond to the monotonicity of $\pi$ while $\HMCB_\neg$ and $\HNMCB_\neg$ establish the connection between the support of truth and support of falsity of a~given $\phi\in\LBD$.

We finish the section by establishing strong completeness results.
\begin{theorem}[Completeness of $\HMCB$ and $\HNMCB$]\label{theorem:HMCBcompleteness}
For any $\Psi\cup\{\alpha\}\subseteq\LMCB$ and $\Omega\cup\{\beta\}\subseteq\LNMCB$, it holds that
\begin{align*}
\Psi\vdash_{\HMCB}\alpha&\text{ iff }\Psi\models_{\MCB}\alpha&\Omega\vdash_{\HNMCB}\beta&\text{ iff }\Omega\models_{\GsquareNelson}\beta
\end{align*} 
\end{theorem}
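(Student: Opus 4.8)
The plan is to mirror the completeness argument for $\HQG$ (Theorem~\ref{theorem:HQGcompleteness}): establish soundness by inspecting the two modal schemas, and establish completeness by contraposition, reducing the outer layer to the algebraic completeness of $\Gsquare$ (Theorem~\ref{theorem:HG2algebraiccompleteness}) and then realising the refuting valuation on a canonical $\BD$ model equipped with a suitable measure $\pi$. For soundness, I would note that $\HMCB_{\BD}$ encodes monotonicity of $\pi$: if $\phi\vdash\chi$ is $\BD$-valid then in every model $|\phi|^+\subseteq|\chi|^+$ and $|\chi|^-\subseteq|\phi|^-$ by Definition~\ref{def:BDframesemantics}, so monotonicity gives $e_1(\Crtn\phi)\leq e_1(\Crtn\chi)$ and $e_2(\Crtn\chi)\leq e_2(\Crtn\phi)$, which is exactly $e(\Crtn\phi\rightarrow\Crtn\chi)=(1,0)$; and $\HMCB_\neg$ holds because $e_1(\Crtn\neg\phi)=\pi(|\neg\phi|^+)=\pi(|\phi|^-)=e_2(\Crtn\phi)=e_1(\neg\Crtn\phi)$ and dually. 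For $\NMCB$ the same computations apply, with $\strongarrow$ recording the two-coordinate inequality and $\biarrow$ the two-coordinate equality.

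For completeness, assume $\Psi\nvdash_{\HMCB}\alpha$. I would enrich $\Psi$ to $\Psi^*$ by adjoining every instance of $\HMCB_{\BD}$ and $\HMCB_\neg$ built from the inner formulas named by the modal atoms occurring in $\Psi\cup\{\alpha\}$. Since these instances are $\HMCB$-theorems and $\HMCB$ extends $\HGsquareorder$ by exactly these schemas, treating each $\Crtn\phi$ as a fresh propositional atom still yields $\Psi^*\nvdash_{\HGsquareorder}\alpha$. By Theorem~\ref{theorem:HG2algebraiccompleteness} there is then a valuation $e=(e_1,e_2)$ on $[0,1]^{\Join}$ with $e[\Psi^*]$ not below $e(\alpha)$ which, crucially, validates all adjoined modal instances; it remains only to turn $e$ into a genuine $\MCB$ model.

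For the model, I would take $W$ to be the canonical $\BD$ model on $\Var(\Psi^*\cup\{\alpha\})$ --- all four-valued assignments, with $v^+,v^-$ as in Theorem~\ref{theorem:BDformulaformulacompleteness} --- compute $|\phi|^+,|\phi|^-$, and set $\pi(X)=\sup\{e_1(\Crtn\phi):|\phi|^+\subseteq X\}$ with $\phi$ ranging over the named inner formulas. The key lemma is that in this canonical model $|\phi|^+\subseteq|\chi|^+$ holds \emph{iff} $\phi\vdash\chi$ is $\BD$-valid: truth-preservation over all assignments is closed under contraposition, since the false-component of a formula equals the truth-component of its negation and $\neg$ acts as the order-reversing Boolean dual on the independent literal-encodings, so truth-preservation already forces falsity-preservation. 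Granting this, every inclusion $|\phi|^+\subseteq|\chi|^+$ yields an adjoined instance $\Crtn\phi\rightarrow\Crtn\chi$, hence $e_1(\Crtn\phi)\leq e_1(\Crtn\chi)$, so $\pi$ is monotone; on a named set the supremum is attained at $\phi=\chi$, giving $\pi(|\chi|^+)=e_1(\Crtn\chi)$; and $\HMCB_\neg$ gives $\pi(|\phi|^-)=\pi(|\neg\phi|^+)=e_1(\Crtn\neg\phi)=e_2(\Crtn\phi)$. Thus the measure-induced valuation of Definition~\ref{def:MCBandNMCBsemantics} coincides with $e$ on all atoms, and the resulting model refutes $\Psi\models_{\MCB}\alpha$. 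The $\NMCB$ case is identical, invoking the $\HGsquareNelson$ half of Theorem~\ref{theorem:HG2algebraiccompleteness}, with $\strongarrow$ and $\biarrow$ supplying monotonicity in both coordinates and the matching of $e_2$ with negated positive supports.

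The main obstacle is the key lemma: verifying that the only inclusions among the positive-support sets $|\phi|^+$ in the canonical model are those witnessed by genuine $\BD$-entailments, so that the single schema $\HMCB_{\BD}$ (full entailment, not mere truth-preservation) is already strong enough to force monotonicity of $\pi$; this rests on the contraposability of truth-preservation over $\mathbf{4}$. A secondary task is to confirm that $\pi$ as defined is a legitimate uncertainty measure in the sense of Definition~\ref{def:MCBandNMCBsemantics} --- well-defined across the two namings $|\phi|^+$ and $|\phi|^-$, monotone, and satisfying whatever normalisation the definition demands --- and that adjoining the modal instances to $\Psi$ indeed preserves non-derivability of $\alpha$ at the $\Gsquare$ level.
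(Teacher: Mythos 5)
Your proposal follows essentially the same route as the paper's proof: the soundness computations are identical, and completeness is likewise argued by contraposition, treating the modal atoms $\Crtn\phi$ as propositional variables so that Theorem~\ref{theorem:HG2algebraiccompleteness} yields a refuting $\Gsquare$ valuation, which is then realised on the canonical $\BD$ model $W=\mathcal{P}(\LIT(\cdot))$ with $\pi$ defined on named sets from $e_1$ and extended elsewhere by suprema. The only cosmetic difference is that the paper secures closure under the modal schemas by a Lindenbaum-style extension of $\Psi$ to a maximal prime theory rather than by adjoining the instances directly (as in the proof of Theorem~\ref{theorem:HQGcompleteness}, which you mirror), and the paper leaves as ``straightforward to check'' precisely the verification you make explicit --- your key lemma that canonical-model inclusions among positive supports coincide with $\BD$ entailments, and the matching of $e_2$ with $\pi(|\phi|^-)$ via $\HMCB_\neg$.
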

\begin{proof}
We show only the case of $\HMCB$ since $\HNMCB$ can be proved similarly.

For the soundness part, it suffices to establish validity of $\HMCB_{\BD}$ and $\HMCB_\neg$. Indeed, if $\phi\vdash\chi$ is $\BD$ valid, then $|\phi|^+\subseteq|\chi|^+$ and $|\chi|^-\subseteq|\phi|^-$ for any $v^+$ and $v^-$. Hence, $\pi(|\phi|^+)\leq\pi(|\chi|^+)$ and $\pi(|\phi|^-)\geq\pi(|\chi|^-)$. Thus, $e(\mathsf{C}\phi\rightarrow\mathsf{C}\chi)=(1,0)$, as required.

Likewise, $e_1(\Crtn\neg\phi)=\pi(|\neg\phi|^+)=\pi(|\phi|^-)$ and $e_2(\Crtn\neg\phi)=\pi(|\neg\phi|^-)=\pi(|\phi|^+)$, while $e_1(\neg\Crtn\phi)=e_2(\Crtn\phi)=\pi(|\phi|^-)$ and $e_2(\neg\Crtn\phi)=e_1(\Crtn\phi)=\pi(|\phi|^+)$.

For the completeness part, we reason by contraposition. An \emph{$\HMCB$ prime theory} is $\Pi\subseteq\LMCB$ s.t.\ $\Pi\vdash_{\HMCB}\gamma$ iff $\gamma\in\Pi$ and for any $\gamma\vee\gamma'\in\Pi$ $\gamma\in\Pi$ or $\gamma'\in\Pi$.

Assume now, that $\alpha$ cannot be inferred from $\Psi$. We construct a~model refuting $\Psi\models_{\MCB}\alpha$ Assume an enumeration of all $\MCB$ formulas. We let $\Psi=\Psi_0$ and define
\begin{align*}
\Psi_{n+1}&=
\begin{cases}
\Psi_n\cup\{\alpha_n\}&\text{ iff }\Psi_n,\alpha_n\nvdash_{\HMCB}\alpha\\
\Psi_n&\text{ otherwise}
\end{cases}
\end{align*}
We now define $\Psi^*=\bigcup\limits_{n<\omega}\Psi_n$. It is clear that $\Psi^*$ is a~maximal prime theory that does not contain~$\alpha$, whence $\Psi^*\nvdash_{\HGsquareorder}\alpha$. But observe that all formulas are actually $\LGsquareorder$ formulas with $\Crtn\phi$'s instead of variables. Thus, by Theorem~\ref{theorem:HG2algebraiccompleteness}, there is a~$\Gsquare$ valuation $e$ s.t.\ $e_1[\Psi^*]>e_1(\alpha)$ or $e_2[\Psi^*]<e_2(\alpha)$. It is also clear from Corollary~\ref{cor:strongtriangleadmissibility} that $\triangleorder\xi\in\Psi^*$ for every $\xi$ being an instance of a~modal axiom. Thus, $e$ evaluates all modal axioms with $(1,0)$.

It remains to define $\pi$ and $v^\pm$. We set $W=\mathcal{P}(\LIT(\Psi^*\cup\{\alpha\}))$. Then for any $w\in W$, we let $w\in v^+(p)$ iff $p\in w$ and $w\in v^-(p)$ iff $\neg p\in w$. And finally, for any $\Crtn\phi\in\Sf[\Psi^*\cup\{\alpha\}]$, we set $\pi(|\phi|)=e_1(\Crtn\phi)$. For other $X\subseteq W$, we set $\pi(X)=\sup\{\pi(|\phi|^+):\phi\in\Sf[\Psi^*\cup\{\alpha\}],|\phi|^+\subseteq X\}$ and $\pi(W)=1$ and $\pi(\varnothing)=0$. It is straightforward to check that $\pi$ thus defined conforms to Definition~\ref{def:MCBandNMCBsemantics}.

The result follows.
\end{proof}
\subsection{Extensions}
The logics of monotone comparative belief provided in the previous subsection were, in a~sense, minimal. It is thus instructive to consider their extensions with additional axioms corresponding to additional conditions imposed on $\pi$.

First, observe that since $\BD$ lacks tautologies and universally false formulas, $\mathsf{cap}$ does not have any analogues in $\MCB$, nor in $\NMCB$. In fact, one can prove Theorem~\ref{theorem:HMCBcompleteness} even without requiring that $\pi(W)=1$ and $\pi(\varnothing)=0$\footnote{Note, however, that in this case, $\pi$ is not going to be a~measure.}. This shows that the truth and falsity of $\MCB$ and $\NMCB$ formulas depend only on the order relations between different uncertainty measures of different events, not on the values of these measures. However, in contrast to $\QP$, it is not problematic: if one describes events using $\BD$, there is no event whose uncertainty measure one could know \emph{a~priori}\footnote{One can, however, \emph{express} in $\MCB$ and $\NMCB$ that the agent is completely certain in a~given statement.} just by its description. This, again, is in line with that $\MCB$ and $\NMCB$ formalise reasoning with uncertainty when the agent tries to build their beliefs using only the information provided by their sources as we discussed in Section~\ref{subsec:MCBNMCBlanguagesemantics}. Furthermore, since $\neg$ is not related to the set-theoretic complement, $\mathsf{KPS}_m$ axioms cannot be meaningfully translated either.

Still, we may assert that two events $\phi$ and $\phi'$ are incompatible if $\pi(|\phi\wedge\phi'|^+)=0$ and $\pi(|\phi\wedge\phi'|^-)=1$. Indeed, this statement corresponds to ${\sim}_\mathbf{0}\Crtn(\phi\wedge\phi')$ having value $(1,0)$ or to the formula $\triangleorder{\sim}_\mathbf{0}\Crtn(\phi\wedge\phi')$. To express incompatibility in $\NMCB$, we define the following connective:
\begin{align*}
\triangle^{!\mathsf{N}}\alpha&\coloneqq\triangleNelson(\mathbf{1}\strongarrow\alpha)
\end{align*}
Now $\triangle^{!\mathsf{N}}$ can be used to express that the agent is completely certain in $\phi$ as follows: $\triangle^{!\mathsf{N}}\Crtn\phi$.
It is clear that
\begin{align*}
e_1(\triangle^{!\mathsf{N}}\alpha)&=
\begin{cases}
1&\text{iff }e(\alpha)=(1,0)\\
0&\text{otherwise}
\end{cases}
\end{align*}
Thus, $\triangle^{!\mathsf{N}}{\sim}_\mathsf{N}\Crtn(\phi\wedge\phi')$ corresponds to the incompatibility of $\phi$ and $\phi'$ in $\NMCB$.

The next statement establishes some correspondence results for $\MCB$ and $\NMCB$.
\begin{convention}
We introduce the following naming conventions.
\begin{description}
\item[$\mathsf{disj}+^\neg$:] $(\triangleorder{\sim}_\mathbf{0}\Crtn(p\wedge p')\wedge{\sim}_\mathbf{0}\triangleorder{\sim}_\mathbf{0}\Crtn p\!\wedge\!{\sim}_\mathbf{0}\triangleorder{\sim}_\mathbf{0}\Crtn p')\!\rightarrow\!({\sim}_\mathbf{0}\triangleorder(\Crtn(p\vee p')\!\rightarrow\!\Crtn p)\!\wedge\!{\sim}_\mathbf{0}\triangleorder(\Crtn(p\vee p')\!\rightarrow\!\Crtn p'))$
\item[$\mathsf{disj}0^\neg$:] $\triangleorder{\sim}_\mathbf{0}\Crtn p\rightarrow\triangleorder(\Crtn p'\leftrightarrow\Crtn(p\vee p'))$
\item[$\mathsf{disj}+^{\mathsf{N}}$:] $(\triangleNelson{\sim}_\mathsf{N}\Crtn(p\wedge p')\wedge{\sim}_\mathsf{N}{\sim}_\mathsf{N}\Crtn p\!\wedge\!{\sim}_\mathsf{N}{\sim}_\mathsf{N}\Crtn p')\!\weakrightarrow\!({\sim}_\mathsf{N}\triangleNelson(\Crtn(p\vee p')\!\rightarrow\!\Crtn p)\!\wedge\!{\sim}_\mathsf{N}\triangleNelson(\Crtn(p\vee p')\!\weakrightarrow\!\Crtn p'))$
\item[$\mathsf{disj}0^\mathsf{N}$:] $\triangleNelson{\sim}_\mathsf{N}\Crtn p\weakrightarrow\triangleNelson(\Crtn p'\weakleftrightarrow\Crtn(p\vee p'))$
\end{description}
\end{convention}
\begin{theorem}\label{theorem:MCBcorrespondencetheory}
Let $\mathbb{F}=\langle W,\pi\rangle$ be a~frame. Then the following statements hold.
\begin{align*}
\mathbb{F}\models\mathsf{disj}+^\neg&\ \text{ iff }\ \pi(X\cap X')=0,\pi(Y\cup Y')=1,\pi(X),\pi(X')\neq0,\pi(Y),\pi(Y')\neq1\\&\Rightarrow\pi(X\cup X')>\pi(X),\pi(X')\text{ or }\pi(Y\cap Y')<\pi(Y),\pi(Y')
\tag{I}\\
\mathbb{F}\models\mathsf{disj}0^\neg&\ \text{ iff }\ \pi(X)=0\text{ and }\pi(Y)=1\Rightarrow\pi(X\cup X')=\pi(X')\text{ and }\pi(Y\cap Y')=\pi(Y')\tag{II}\\
\mathbb{F}\models\mathsf{disj}+^\mathsf{N}&\ \text{ iff }\ Y\cap Y'=\varnothing\text{ and }\mu(Y),\pi(Y')\!>\!0\Rightarrow\pi(Y\cup Y')\!>\!\pi(Y),\pi(Y')\tag{III}\\
\mathbb{F}\models\mathsf{disj}0^\mathsf{N}&\ \text{ iff }\ \pi(Y)=0\Rightarrow\pi(Y\cup Y')=\pi(Y')\tag{IV}
\end{align*}
\end{theorem}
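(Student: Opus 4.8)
The plan is to follow the template of the proof of Theorem~\ref{theorem:QGcorrespondencetheory}, treating each of the four biconditionals separately and establishing each direction by contraposition. The single most useful preliminary observation is that all four formulas are built exclusively from the atoms $\Crtn\phi$, the G\"odelian negations ${\sim}_\mathbf{0}$ / ${\sim}_\mathsf{N}$, and the operators $\triangleorder$ / $\triangleNelson$. Consequently, on every model each such formula takes only a \emph{classical} value: in the $\Gsquareorder$-case its value is either $(1,0)$ or $(0,1)$, and in the $\GsquareNelson$-case its first coordinate is either $0$ or $1$. Hence, just as in the $\QG$ argument where ``$e(\mathsf{disj}0)=0$ since $\mathsf{disj}0$ is composed of $\triangle$-formulas and ${\sim}_\mathsf{G}$-formulas'', non-validity of any of our formulas forces the complementary classical value, so that validity on $\mathbb{F}$ reduces to a genuinely two-valued assertion about $\pi$, which is exactly the right-hand side of each equivalence.

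For the direction from the semantic condition to validity, I would fix an arbitrary model on $\mathbb{F}$ (equivalently, arbitrary $v^+,v^-$, since $\pi$ is fixed by the frame) and read off, via Definition~\ref{def:MCBandNMCBsemantics}, the values of the modal atoms as $e_1(\Crtn\phi)=\pi(|\phi|^+)$ and $e_2(\Crtn\phi)=\pi(|\phi|^-)$. Using the $\BD$ support clauses from Definition~\ref{def:BDframesemantics} --- in particular $|p\wedge p'|^+=|p|^+\cap|p'|^+$, $|p\wedge p'|^-=|p|^-\cup|p'|^-$, and dually for $\vee$ --- each conjunct of the antecedent translates into one of the hypotheses on $\pi$ (for instance, $\triangleorder{\sim}_\mathbf{0}\Crtn(p\wedge p')=(1,0)$ says precisely that $\pi(|p\wedge p'|^+)=0$ and $\pi(|p\wedge p'|^-)=1$), and the assumed condition on $\pi$ then forces the consequent. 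The computation is routine once the truth conditions of ${\sim}_\mathbf{0}$, ${\sim}_\mathsf{N}$, $\triangleorder$, $\triangleNelson$ (given in Section~\ref{subsec:MCBNMCBlanguagesemantics}) are tabulated; I would carry out one case (say~II) in full and indicate that the remaining three differ only in bookkeeping.

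For the converse direction I would assume the semantic condition fails and exhibit a refuting model. The decisive feature of the inner layer is that in $\BD$ the positive and negative extensions of a propositional variable are \emph{independent} (Definition~\ref{def:BDframesemantics}), so given witnessing sets I can prescribe $|p|^+$, $|p|^-$, $|p'|^+$, $|p'|^-$ freely. For the $\neg$-versions (I and~II) the condition constrains \emph{both} the $\pi$-value of a positive support and that of a negative support simultaneously (this is what the two occurrences of ${\sim}_\mathbf{0}$ together with the clause for $\Crtn\neg\phi$ produce), so I genuinely use this two-dimensional freedom: I set $|p|^+=X$, $|p'|^+=X'$, $|p|^-=Y$, $|p'|^-=Y'$ and check that the induced valuation gives the formula the value $(0,1)$. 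For the Nelson versions (III and~IV) only the first coordinate is relevant, since $\models_{\GsquareNelson}$ inspects only $e_1$ (Definition~\ref{def:G2entailments}); here it suffices to control the positive extensions, and I would realise a vanishing positive measure by taking the relevant supports to be \emph{disjoint}, using $\pi(\varnothing)=0$.

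The main obstacle is the bookkeeping in the $\neg$-versions: because ${\sim}_\mathbf{0}$ is itself a two-dimensional connective and interacts with $\triangleorder$ and with the negation clause underlying $\HMCB_\neg$, I must track \emph{which} coordinate each subformula constrains and ensure that the countermodel realises the exact conjunction of positive-support and negative-support conditions appearing on the right-hand side. A secondary point requiring care is the asymmetry between the set-theoretic hypotheses (such as disjointness of supports) and the measure-theoretic ones (such as a support having measure $0$): the forward and backward constructions must use matching formulations, and in the Nelson cases this is reconciled precisely by exploiting $\pi(\varnothing)=0$ together with the fact that $\GsquareNelson$-validity ignores the second coordinate.
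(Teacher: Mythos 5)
Your proposal is correct and follows essentially the same route as the paper: the paper's own proof is literally a one-line reference (``analogously to Theorem~\ref{theorem:QGcorrespondencetheory}''), and what you describe --- reducing each formula to a classical-valued assertion about $\pi$, reading off the hypotheses via the $\BD$ support clauses, and building countermodels by exploiting the independence of $v^+$ and $v^-$ --- is precisely that adaptation, spelled out. Your closing worry about the set-theoretic versus measure-theoretic hypothesis in case~(III) is well placed, but the mismatch lies in the paper's statement (which, to be consistent with condition~(II) of Theorem~\ref{theorem:QGcorrespondencetheory} and to make the left-to-right direction sound, should read $\pi(Y\cap Y')=0$ rather than $Y\cap Y'=\varnothing$), not in your argument.
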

\begin{proof}
Analogously to Theorem~\ref{theorem:QGcorrespondencetheory}.
\end{proof}

The formulas from the previous theorem are paraconsistent analogues of those from Theorem~\ref{theorem:QGcorrespondencetheory}. Note, first of all, that we do not translate $1\mathsf{compl}$. It tells that if an agent is completely certain in some statement, then they should completely disbelieve its negation. In a~paraconsistent setting, however, it might be the case that all sources provide contradictory information about $p$, whence this principle is not justified.

Moreover, there is a~considerable difference in the expressivity of $\MCB$ and $\NMCB$. The former takes into account both support of truth and support of falsity, while the latter only support of truth. It means that the properties of the uncertainty measures that can be axiomatised using $\MCB$ are considerably weaker than those axiomatisable in $\QPG$ or $\NMCB$ because every outer-layer formula corresponds not to one but two subsets of the carrier.
\section{Conclusion\label{sec:conclusion}}
In this paper,
we considered the two-layered logic of qualitative probabilities $\QPG$ and its paraconsistent expansions $\MCB$ and $\NMCB$ that formalise reasoning with uncertain, contradictory, or inconclusive information.
For these logics, we provided Hilbert-style axiomatisations and proved their completeness. We showed that the $\SIF$ fragment of G\"{a}rdenfors' logic of qualitative probabilities $\QP$ can be faithfully embedded into $\QPG$. We also defined generalisations of both $\QPG$ and its paraconsistent expansions and proved some results regarding the correspondence between properties of uncertainty measures and two-layered modal formulas.

Still, several questions remain open. First and foremost, Wong conditions and Kraft--Pratt--Seidenberg conditions we gave in Section~\ref{sec:QPC} show which uncertainty measures can be extended to a~belief function or a~probability measure, respectively, while preserving all orders. In~\cite{KleinMajerRad2021} and~\cite{BilkovaFrittellaKozhemiachenkoMajerNazari2023APAL}, paraconsistent counterparts of probabilities and belief functions were provided. A~natural next step would be to establish their qualitative characterisations and provide a~two-layered axiomatisation with $\Gsquare$ as the outer logic.

In~\cite{BilkovaFrittellaMajerNazari2020,KleinMajerRad2021}, paraconsistent counterparts of belief functions and probability measures were introduced under the titles of ‘non-standard belief functions’ and ‘non-standard probabilities’. Neither $\MCB$ nor $\NMCB$, however, have non-standard belief functions or probabilities as their belief measures. This is why it would be instructive to provide a~similar characterisation of non-standard belief functions and probabilities and develop a~logic of non-standard qualitative probabilities.

Second, we established a~faithful translation of $\SIF$'s into $\LQG$. A~natural question is whether there are any other classes of $\LQP$ formulas that can be faithfully translated into $\LQG$. The most evident candidates would be formulas without nesting of $\lesssim$ and formulas where each variable is in the scope of $\lesssim$.

Finally, $\QP$ has finite model property and is decidable. Since proofs in $\QG$ and its expansions, as well as those in $\MCB$ and $\NMCB$, are just derivations from assumptions in $\biG$ and $\Gsquare$, $\QG$, $\MCB$, and $\NMCB$ and their expansions are decidable as well. Thus, a~straightforward and explicit decision procedure along with the complexity evaluation would be desirable to obtain.
\section*{Acknowledgements}
The research of Marta B\'ilkov\'a was supported by grant 22-01137S of the Czech Science Foundation. The research of Sabine Frittella and Daniil Kozhemiachenko was funded by the grant ANR JCJC 2019, project PRELAP (ANR-19-CE48-0006). This research is part of the MOSAIC project financed by the European Union's Marie Sk\l{}odowska-Curie grant No.~101007627.

We are thankful to two anonymous reviewers for their comments that greatly enhanced the quality of the paper.
\bibliographystyle{plain}
\bibliography{references.bib}

\begin{thebibliography}{10}

\bibitem{Baaz1996}
M.~Baaz.
\newblock Infinite-valued {G}{\"o}del logics with $0$-$1$-projections and
  relativizations.
\newblock In {\em G{\"o}del'96: Logical foundations of mathematics, computer
  science and physics---Kurt G{\"o}del's legacy, Brno, Czech Republic, August
  1996, proceedings}, volume~6, pages 23--34. Association for Symbolic Logic,
  1996.

\bibitem{BaldiCintulaNoguera2020}
P.~Baldi, P.~Cintula, and C.~Noguera.
\newblock Classical and fuzzy two-layered modal logics for uncertainty:
  Translations and proof-theory.
\newblock {\em International Journal of Computational Intelligence Systems},
  13(1):988--1001, 2020.

\bibitem{BaldiHosni2021}
P.~Baldi and H.~Hosni.
\newblock Logical approximations of qualitative probability.
\newblock In A.~Cano, J.~{de} Bock, En. Miranda, and S.~Moral, editors, {\em
  Proceedings of the Twelfth International Symposium on Imprecise Probability:
  Theories and Applications}, volume 147 of {\em Proceedings of Machine
  Learning Research}, pages 12--21. PMLR, 06--09 Jul 2021.

\bibitem{Belnap1977fourvalued}
N.D. Belnap.
\newblock {A Useful Four-Valued Logic}.
\newblock In J.~Michael Dunn and George Epstein, editors, {\em Modern Uses of
  Multiple-Valued Logic}, pages 5--37, Dordrecht, 1977. Springer Netherlands.

\bibitem{Belnap2019}
N.D. Belnap.
\newblock How a computer should think.
\newblock In H.~Omori and H.~Wansing, editors, {\em New Essays on Belnap-Dunn
  Logic}, volume 418 of {\em Synthese Library (Studies in Epistemology, Logic,
  Methodology, and Philosophy of Science)}. Springer, Cham, 2019.

\bibitem{BilkovaFrittellaKozhemiachenko2021TABLEAUX}
M.~B{\'\i}lkov{\'a}, S.~Frittella, and D.~Kozhemiachenko.
\newblock Constraint tableaux for two-dimensional fuzzy logics.
\newblock In {\em Automated Reasoning with Analytic Tableaux and Related
  Methods. TABLEAUX 2021}, volume 12842 of {\em Lecture notes in computer
  science}, pages 20--37. Springer International Publishing, Cham, 2021.

\bibitem{BilkovaFrittellaKozhemiachenkoMajerNazari2023APAL}
M.~B{\'\i}lkov{\'a}, S.~Frittella, D.~Kozhemiachenko, O.~Majer, and S.~Nazari.
\newblock Reasoning with belief functions over {Belnap--Dunn} logic.
\newblock {\em Annals of Pure and Applied Logic}, page 103338, July 2023.

\bibitem{BilkovaFrittellaMajerNazari2020}
M.~B{\'\i}lkov{\'a}, S.~Frittella, O.~Majer, and S.~Nazari.
\newblock Belief based on inconsistent information.
\newblock In {\em International Workshop on Dynamic Logic}, pages 68--86.
  Springer, 2020.

\bibitem{Bueno-SolerCarnielli2016}
J.~Bueno-Soler and W.~Carnielli.
\newblock {Paraconsistent probabilities: Consistency, contradictions and Bayes'
  theorem}.
\newblock {\em Entropy (Basel)}, 18(9):325, September 2016.

\bibitem{CintulaFermuellerNoguera2015HandbookofMFL3}
P.~Cintula, C.G. Ferm\"{u}ller, and C.~Noguera.
\newblock {\em Handbook of Mathematical Fuzzy Logic 3}.
\newblock Studies in Logic 58. College Publications, 2015.

\bibitem{CintulaFermuellerNoguera2021SEP}
P.~Cintula, C.G. Fermüller, and C.~Noguera.
\newblock {Fuzzy Logic}.
\newblock In Edward~N. Zalta, editor, {\em The {Stanford} Encyclopedia of
  Philosophy}. Metaphysics Research Lab, Stanford University, {W}inter 2021
  edition, 2021.

\bibitem{CintulaHajekNoguera2011HandbookofMFL1}
P.~Cintula, P.~H{\'{a}}jek, and C.~Noguera.
\newblock {\em Handbook of Mathematical Fuzzy Logic 1}.
\newblock Studies in Logic 37. College Publications, 2011.

\bibitem{CintulaHajekNoguera2011HandbookofMFL2}
P.~Cintula, P.~H{\'{a}}jek, and C.~Noguera.
\newblock {\em Handbook of Mathematical Fuzzy Logic 2}, volume~38 of {\em
  Studies in Logic}.
\newblock College Publications, 2011.

\bibitem{DelgrandeRenne2015}
J.~Delgrande and B.~Renne.
\newblock The logic of qualitative probability.
\newblock In {\em Twenty-Fourth International Joint Conference on Artificial
  Intelligence}, 2015.

\bibitem{DelgrandeRenneSack2019}
J.P. Delgrande, B.~Renne, and J.~Sack.
\newblock The logic of qualitative probability.
\newblock {\em Artificial Intelligence}, 275:457--486, October 2019.

\bibitem{Dubois2008}
D.~Dubois.
\newblock On ignorance and contradiction considered as truth-values.
\newblock {\em Logic Journal of the IGPL}, 16(2):195--216, 2008.

\bibitem{DuboisPradeSchockaert2017}
D.~Dubois, H.~Prade, and S.~Schockaert.
\newblock Generalized possibilistic logic: Foundations and applications to
  qualitative reasoning about uncertainty.
\newblock {\em Artificial Intelligence}, 252:139--174, November 2017.

\bibitem{Dunn1976}
J.M. Dunn.
\newblock Intuitive semantics for first-degree entailments and ‘coupled
  trees’.
\newblock {\em Philosophical Studies}, 29(3):149--168, 1976.

\bibitem{Dunn2000}
J.M. Dunn.
\newblock {Partiality and Its Dual}.
\newblock {\em Studia Logica}, 66(1):5--40, Oct 2000.

\bibitem{FaginHalpernMegiddo1990}
R.~Fagin, J.~Y. Halpern, and N.~Megiddo.
\newblock A logic for reasoning about probabilities.
\newblock {\em Information and Computation}, 87:78--128, 1990.

\bibitem{FaginHalpern1991ComputationalIntelligence}
R.~Fagin and J.Y. Halpern.
\newblock Uncertainty, belief, and probability.
\newblock {\em Computational Intelligence}, 7(3):160--173, August 1991.

\bibitem{FarinasdelCerroHerzig1991}
L.~Fari{\~n}as~del Cerro and A.~Herzig.
\newblock A modal analysis of possibility theory.
\newblock In {\em Fundamentals of Artificial Intelligence Research}, Lecture
  notes in computer science, pages 11--18. Springer Berlin Heidelberg, Berlin,
  Heidelberg, 1991.

\bibitem{deFinetti1937}
B.~{de} Finetti.
\newblock La pr{\'e}vision: ses lois logiques, ses sources subjectives.
\newblock {\em Annales de l'institut Henri Poincar{\'e}}, 7(1):1--68, 1937.

\bibitem{FlaminioGodoMarchioni2011}
T.~Flaminio, L.~Godo, and E.~Marchioni.
\newblock Reasoning about uncertainty of fuzzy events: an overview.
\newblock In {\em Understanding Vagueness: Logical, Philosophical, and
  Linguistic Perspectives}, volume~36 of {\em Studies in Logic}, pages
  367--400. College Publications London, 2011.

\bibitem{FlaminioGodoUgolini2022}
T.~Flaminio, L.~Godo, and S.~Ugolini.
\newblock {An Approach to Inconsistency-Tolerant Reasoning About Probability
  Based on {\L}ukasiewicz Logic}.
\newblock In {\em International Conference on Scalable Uncertainty Management},
  pages 124--138. Springer, 2022.

\bibitem{Gardenfors1975}
P.~G{\"a}rdenfors.
\newblock Qualitative probability as an intensional logic.
\newblock {\em Journal of Philosophical Logic}, pages 171--185, 1975.

\bibitem{GodoHajekEsteva2003}
L.~Godo, P.~H{\'a}jek, and F.~Esteva.
\newblock A fuzzy modal logic for belief functions.
\newblock {\em Fundamenta Informaticae}, 57(2--4):127--146, 2003.

\bibitem{Grabisch2009}
M.~Grabisch.
\newblock Belief functions on lattices.
\newblock {\em International Journal of Intelligent Systems}, 24(1):76--95,
  January 2009.

\bibitem{GrigoliaKiseliovaOdisharia2016}
R.~Grigolia, T.~Kiseliova, and V.~Odisharia.
\newblock Free and projective bimodal symmetric {G}{\"o}del algebras.
\newblock {\em Studia Logica}, 104(1):115--143, 2016.

\bibitem{Haehnle2001HBPL}
R.~H{\"a}hnle.
\newblock Advanced many-valued logics.
\newblock In D.M. Gabbay and F.~Guenthner, editors, {\em Handbook of
  Philosophical Logic}, volume 2nd, pages 297--395, Dordrecht, 2001. Springer
  Netherlands.

\bibitem{Hajek1998}
P.~H{\'{a}}jek.
\newblock {\em Metamathematics of Fuzzy Logic}, volume~4 of {\em Trends in
  Logic}.
\newblock Kluwer, 1998.

\bibitem{HajekGodoEsteva1995}
P.~H{\'{a}}jek, L.~Godo, and F.~Esteva.
\newblock Fuzzy logic and probability.
\newblock In Philippe Besnard and Steve Hanks, editors, {\em {UAI} '95:
  Proceedings of the Eleventh Annual Conference on Uncertainty in Artificial
  Intelligence, Montreal, Quebec, Canada, August 18-20, 1995}, pages 237--244.
  Morgan Kaufmann, 1995.

\bibitem{Halpern2017}
J.Y. Halpern.
\newblock {\em Reasoning about uncertainty}.
\newblock The MIT Press, 2nd edition, 2017.

\bibitem{KleinMajerRad2021}
D.~Klein, O.~Majer, and S.~Rafiee~Rad.
\newblock Probabilities with gaps and gluts.
\newblock {\em Journal of Philosophical Logic}, 50(5):1107--1141, October 2021.

\bibitem{KraftPrattSeidenberg1959}
C.H. Kraft, J.W. Pratt, and A.~Seidenberg.
\newblock Intuitive probability on finite sets.
\newblock {\em The Annals of Mathematical Statistics}, 30(2):408--419, 1959.

\bibitem{Leitgeb2019}
H.~Leitgeb.
\newblock Hype: A system of hyperintensional logic (with an application to
  semantic paradoxes).
\newblock {\em Journal of Philosophical Logic}, 48(2):305--405, 2019.

\bibitem{Moisil1942}
G.M. Moisil.
\newblock Logique modale.
\newblock {\em Disquisitiones mathematicae et physicae}, 2:3--98, 1942.

\bibitem{Nelson1949}
D.~Nelson.
\newblock Constructible falsity.
\newblock {\em The Journal of Symbolic Logic}, 14(1):16--26, 1949.

\bibitem{OdintsovWansing2021}
S.~Odintsov and H.~Wansing.
\newblock Routley star and hyperintensionality.
\newblock {\em Journal of Philosophical Logic}, 50:33--56, 2021.

\bibitem{OmoriWansing2017}
H.~Omori and H.~Wansing.
\newblock {40 years of FDE: An introductory overview}.
\newblock {\em Studia Logica}, 105(6):1021--1049, 2017.

\bibitem{Prenosil2018PhD}
A.~P\v{r}enosil.
\newblock {\em Reasoning with Inconsistent Information}.
\newblock PhD thesis, Charles University, Prague, 2018.

\bibitem{Rauszer1977}
C.~Rauszer.
\newblock Applications of kripke models to heyting-brouwer logic.
\newblock {\em Studia Logica}, 36(1/2):61--71, 1977.

\bibitem{Rauszer1980phd}
C.~Rauszer.
\newblock {\em An algebraic and Kripke-style approach to a certain extension of
  intuitionistic logic}.
\newblock dissertation, Institute of Mathematics, Polish Academy of Sciences,
  1980.

\bibitem{RodriguesBueno-SolerCarnielli2021}
A.~Rodrigues, J.~Bueno-Soler, and W.~Carnielli.
\newblock Measuring evidence: a probabilistic approach to an extension of
  {Belnap--Dunn} logic.
\newblock {\em Synthese}, 198(S22):5451--5480, October 2021.

\bibitem{Savage1972}
L.J. Savage.
\newblock {\em The foundations of statistics}.
\newblock Dover, 2nd edition, 1972.

\bibitem{Scott1964}
D.~Scott.
\newblock Measurement structures and linear inequalities.
\newblock {\em Journal of mathematical psychology}, 1(2):233--247, 1964.

\bibitem{Segerberg1971}
K.~Segerberg.
\newblock Qualitative probability in a modal setting.
\newblock In {\em Studies in Logic and the Foundations of Mathematics},
  volume~63, pages 341--352. Elsevier, 1971.

\bibitem{Shramko2021}
Y.~Shramko.
\newblock Hilbert-style axiomatization of first-degree entailment and a family
  of its extensions.
\newblock {\em Annals of Pure and Applied Logic}, 172(9):103011, October 2021.

\bibitem{Smith2008}
N.J.J. Smith.
\newblock {\em Vagueness and Degrees of Truth}.
\newblock Oxford University Press, USA, 2008.

\bibitem{TouaziCayrolDubois2015}
F.~Touazi, C.~Cayrol, and D.~Dubois.
\newblock Possibilistic reasoning with partially ordered beliefs.
\newblock {\em Journal of Applied Logic}, 13(4):770--798, December 2015.

\bibitem{Wansing2008}
H.~Wansing.
\newblock Constructive negation, implication, and co-implication.
\newblock {\em Journal of Applied Non-Classical Logics}, 18(2--3):341--364,
  2008.

\bibitem{WongYaoBollmann1992}
S.K.M. Wong, Y.~Yao, and P.~Bollmann.
\newblock Characterization of comparative belief structures.
\newblock {\em International journal of man-machine studies}, 37(1):123--133,
  1992.

\bibitem{WongYaoBollmannBurger1991}
S.K.M. Wong, Y.Y. Yao, P.~Bollmann, and H.C. B\"{u}rger.
\newblock Axiomatization of qualitative belief structure.
\newblock {\em IEEE Transactions on Systems, Man, and Cybernetics},
  21(4):726--734, 1991.

\bibitem{Zhou2013}
C.~Zhou.
\newblock Belief functions on distributive lattices.
\newblock {\em Artificial Intelligence}, 201:1--31, 2013.

\end{thebibliography}
\appendix
\section{Completeness proof of $\HGsquareorder$ and $\HGsquareNelson$}
In this section, we prove the completeness of $\HGsquareorder$ and $\HGsquareNelson$ via the equivalence between their algebraic and frame semantics.
\begin{definition}[$\Gsquare$-frames]\label{def:biGframes}
A $\Gsquare$-frame is a~tuple $\mathbb{F}=\langle W,\preccurlyeq\rangle$ with $W\neq\varnothing$ and $\preccurlyeq$ being a~reflexive linear order on $W$.
\end{definition}
\begin{definition}[Models and semantics]\label{def:GN4kripkesemantics}
A model on the frame $\mathbb{F}$ is a~tuple $\mathfrak{M}=\langle\mathbb{F},v^+,v^-\rangle$ with $v^+,v^-:\mathsf{Prop}\to2^W$ (positive and negative valuations) such that if $s\in v^+(p)$ and $s\preccurlyeq s'$, then $s\in v^+(p)$ (and likewise for $v^-$). Using these maps, the positive and negative support of formulas at state $s\in W$ is defined as follows.
\begin{longtable}{rcl}
$\mathfrak{M},s\vDash^+p$& iff &$w\in v^+(p)$\\
$\mathfrak{M},s\vDash^-p$& iff &$w\in v^-(p)$\\
$\mathfrak{M},s\vDash^+\neg\phi$& iff &$\mathfrak{M},s\vDash^-\phi$\\
$\mathfrak{M},s\vDash^-\neg\phi$& iff &$\mathfrak{M},s\vDash^+\phi$\\
$\mathfrak{M},s\vDash^+\phi_1\wedge\phi_2$& iff &$\mathfrak{M},s\vDash^+\phi_1$ and $\mathfrak{M},s\vDash^+\phi_2$\\
$\mathfrak{M},s\vDash^-\phi_1\wedge\phi_2$& iff &$\mathfrak{M},s\vDash^-\phi_1$ or $\mathfrak{M},s\vDash^-\phi_2$\\
$\mathfrak{M},s\vDash^+\phi_1\vee\phi_2$& iff &$\mathfrak{M},s\vDash^+\phi_1$ or $\mathfrak{M},s\vDash^+\phi_2$\\
$\mathfrak{M},s\vDash^-\phi_1\vee\phi_2$& iff &$\mathfrak{M},s\vDash^-\phi_1$ and $\mathfrak{M},s\vDash^-\phi_2$\\
$\mathfrak{M},s\vDash^+\phi_1\rightarrow\phi_2$& iff &$\forall s'\succcurlyeq s:\mathfrak{M},s'\vDash^+\phi_1\Rightarrow\mathfrak{M},s'\vDash^+\phi_2$\\
$\mathfrak{M},s\vDash^-\phi_1\rightarrow\phi_2$& iff &$\exists s'\preccurlyeq s:\mathfrak{M},s'\nvDash^-\phi_1~\&~\mathfrak{M},s'\vDash^-\phi_2$\\
$\mathfrak{M},s\vDash^+\phi_1\Yleft\phi_2$& iff &$\exists s'\preccurlyeq s:\mathfrak{M},s'\vDash^+\phi_1~\&~\mathfrak{M},s'\nvDash^+\phi_2$\\
$\mathfrak{M},s\vDash^-\phi_1\Yleft\phi_2$& iff &$\forall s'\preccurlyeq s:\mathfrak{M},s'\nvDash^-\phi_1\Rightarrow\mathfrak{M},s'\nvDash^-\phi_2$\\
$\mathfrak{M},s\vDash^+\phi_1\weakrightarrow\phi_2$& iff &$\forall s'\succcurlyeq s:\mathfrak{M},s'\vDash^+\phi_1\Rightarrow\mathfrak{M},s'\vDash^+\phi_2$\\
$\mathfrak{M},s\vDash^-\phi_1\weakrightarrow\phi_2$& iff &$\mathfrak{M},s\vDash^+\phi_1~\&~\mathfrak{M},s\vDash^-\phi_2$\\
$\mathfrak{M},s\vDash^+\phi_1\weakcoimplies\phi_2$& iff &$\exists s'\preccurlyeq s:\mathfrak{M},s'\vDash^+\phi_1~\&~\mathfrak{M},s'\nvDash^+\phi_2$\\
$\mathfrak{M},s\vDash^-\phi_1\weakcoimplies\phi_2$& iff &$\mathfrak{M},s\vDash^-\phi_1$ or $\mathfrak{M},s\vDash^+\phi_2$
\end{longtable}
\end{definition}
\begin{definition}[Entailment]\label{def:KripkeG2entailments}
$\Gamma$ (locally) entails $\phi$ in $\Gsquare$ --- denoted $\Gamma\models\phi$ --- iff for any $\mathfrak{M}$ and $s\in\mathfrak{M}$ it holds that
$$\text{if }\mathfrak{M},s\vDash^+[\Gamma]\text{ then }\mathfrak{M},s\vDash^+\phi$$
\end{definition}
\begin{theorem}[$\HGsquareorder$ completeness]\label{theorem:HG2orderKripkecompleteness}
Let $\Gamma\cup\{\phi\}\subseteq\LGsquareorder$. Then $\Gamma\models\phi$ iff there is a~derivation of $\phi$ from $\Gamma$ in $\HGsquareorder$ s.t.\ $\mathsf{nec}$ is applied only to $\HGsquareorder$ theorems.
\end{theorem}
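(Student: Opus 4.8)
The plan is to deduce the Kripke completeness from the algebraic completeness already established in Theorem~\ref{theorem:HG2algebraiccompleteness}, by setting up a two-way correspondence between valuations into the twist algebra $[0,1]^{\Join}$ and Kripke models on linear $\Gsquare$-frames. The soundness half is routine: I would check that each axiom of $\HGsquareorder$ is positively supported at every state of every linear model and that the rules preserve local positive support along $\preccurlyeq$. The only point needing attention is $\mathsf{nec}$. Because it is applied only to $\HGsquareorder$ theorems, its premise is positively supported at \emph{all} states, so the $\triangle$-formula it produces is supported everywhere as well; applying $\mathsf{nec}$ to a formula merely derived from $\Gamma$ would destroy this, which is exactly the phenomenon the side-condition in the statement records (local versus global consequence).

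The workhorse for completeness is the direction from frames to algebras. Given a Kripke model on a linear frame $\langle W,\preccurlyeq\rangle$, the $\preccurlyeq$-up-sets of $W$ form a linearly ordered bi-G\"{o}del algebra (up-sets of a chain are themselves linearly ordered by inclusion, and residuation is computed along the chain), and pairing the positive-support cone $|\phi|^+$ with the negative-support cone $|\phi|^-$ yields a valuation into its twist product. I would then prove a satisfaction lemma, by induction on $\phi\in\LGsquareorder$, showing that the two coordinates of the algebraic value of $\phi$ are precisely these cones; the decisive cases are $\rightarrow$ and $\coimplies$, where the universal quantifier over $\succcurlyeq$-successors in the positive clause of Definition~\ref{def:GN4kripkesemantics} matches $\Grightarrow$ on the first coordinate, and the existential quantifier over $\preccurlyeq$-predecessors in the negative clause matches $\Gcoimplies$ on the second. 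For countable frames this algebra embeds into $[0,1]^{\Join}$, so any Kripke refutation of $\Gamma\models\phi$ converts into an algebraic refutation, and Theorem~\ref{theorem:HG2algebraiccompleteness} supplies the failed derivation.

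For the converse I would build, from a refuting valuation $e=(e_1,e_2)$, a chain model on $\langle[0,1],\preccurlyeq\rangle$ with $\preccurlyeq$ the reverse real order, declaring $s\vDash^{+}p$ and $s\vDash^{-}p$ according to whether the level of $s$ lies below $e_1(p)$, respectively $e_2(p)$; both clauses are then $\preccurlyeq$-persistent. A truth lemma by induction recovers $s\vDash^{+}\phi$ and $s\vDash^{-}\phi$ as the same thresholds, and choosing a level $s_0$ that separates $\inf e_1[\Gamma]$ from $e_1(\phi)$ gives a state with $s_0\vDash^{+}[\Gamma]$ but $s_0\nvDash^{+}\phi$, refuting $\Gamma\models\phi$. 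Running the two translations together yields the equivalence of $\Gamma\models\phi$ with derivability in which $\mathsf{nec}$ is confined to theorems.

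I expect the main obstacle to be twofold. First, the endpoint bookkeeping in the threshold construction: a single strict-or-non-strict convention cannot simultaneously validate the universal future-clause for $\rightarrow$ at the top of the chain and the existential past-clause for $\coimplies$ at the bottom, so the chain must be refined (extra separating levels, or a half-open convention) so that both the $\Grightarrow$ and the $\Gcoimplies$ cases of the truth lemma come out correctly at $0$ and $1$. Second, the consequence relations must be matched carefully: Definition~\ref{def:KripkeG2entailments} tracks only positive support at a single state, so the refuting state must be chosen on the correct coordinate, and it is the self-duality of $\Gsquareorder$ that lets the negative coordinate be handled by the same threshold argument as the positive one. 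Aligning this single-state, positive-support notion with derivability under the $\mathsf{nec}$-restriction is where the argument is most error-prone, and is precisely what the side-condition in the theorem is there to secure.
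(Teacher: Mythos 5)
Your proposal inverts the paper's logical architecture, and in doing so it becomes circular. You take Theorem~\ref{theorem:HG2algebraiccompleteness} (algebraic completeness of $\HGsquareorder$ w.r.t.\ $[0,1]^{\Join}$) as ``already established'' and derive the Kripke completeness from it via a two-way correspondence between twist-algebra valuations and linear Kripke models. But in the paper the dependency runs the other way: the proof of Theorem~\ref{theorem:HG2algebraiccompleteness} is deferred to the appendix, and there it is obtained \emph{from} Theorem~\ref{theorem:HG2orderKripkecompleteness}, together with exactly the correspondence lemmas you re-derive (your up-set satisfaction lemma is the paper's Lemma~\ref{lemma:modeltoalgebra}, and your threshold construction on a chain is Lemma~\ref{lemma:algebratomodel}). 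So the statement you are asked to prove is the base of the paper's argument, not a corollary of the algebraic theorem; splicing your proof in would leave the two completeness theorems resting on each other, with neither actually proved. Your side remarks (soundness, the local-consequence reading of the $\mathsf{nec}$ restriction, the endpoint bookkeeping) are sensible, but they do not supply the independent argument that is missing.

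What the paper actually does --- and what your proposal lacks --- is a reduction to a completeness theorem that is already known and involves no De Morgan negation at all: $\HbiG$ (that is, $\HHB$ extended by the $\mathsf{prel}$ axioms) is complete w.r.t.\ linearly ordered bi-intuitionistic frames. Given $\Gamma\cup\{\phi\}\subseteq\LGsquareorder$, one uses the De Morgan axioms of $\HGsquareorder$ to rewrite every formula into negation normal form and then replaces each literal $\neg p$ by a fresh variable $p^*$; by Lemma~12 of~\cite{Wansing2008}, a formula is falsified at a state of a $\Gsquareorder$ model iff its starred negation normal form is falsified at a state of an $\HB$ model \emph{over the same frame}. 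This single translation eliminates both the negation and the two-dimensionality, and the known bi-G\"{o}del Kripke completeness then yields the theorem directly, with no appeal to the algebraic semantics. If you wish to keep your algebra-first route, you would have to first prove Theorem~\ref{theorem:HG2algebraiccompleteness} by independent means (e.g.\ a Lindenbaum--Tarski construction showing that the Lindenbaum algebra of $\HGsquareorder$ embeds into $[0,1]^{\Join}$), which is a substantial piece of work that your proposal does not contain.
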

\begin{proof}
Recall first, that adding $\mathsf{prel}$ axioms to $\HHB$ produces a~calculus for G\"{o}del logic with co-implication which is complete w.r.t.\ linear frames. We use De Morgan laws to transform every formula into its negation normal form. Now let $\phi^*$ be a~negation normal form of $\phi$ where each literal $\neg p$ is substituted with a~fresh variable $p^*$. We know from~\cite[Lemma~12]{Wansing2008} that $\phi$ is falsified on a~$\Gsquareorder$ model iff $\phi^*$ is falsified on an $\HB$ model \emph{over the same frame}. Thus, $\HGsquareorder$ is complete w.r.t.\ the class of linear frames.
\end{proof}

The completeness result for $\HGsquareNelson$ can be proved in the same manner.
\begin{theorem}[$\HGsquareNelson$ completeness]\label{theorem:HG2NelsonKripkecompleteness}
Let $\Gamma\cup\{\phi\}\subseteq\LGsquareNelson$. Then $\Gamma\models\phi$ iff there is a~derivation of $\phi$ from $\Gamma$ in $\HGsquareorder$ s.t.\ $\mathsf{nec}$ is applied only to $\HGsquareNelson$ theorems.
\end{theorem}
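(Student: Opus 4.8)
The plan is to mirror the proof of theorem~\ref{theorem:HG2orderKripkecompleteness} step for step, replacing the self-dual (co-)implications $\rightarrow,\coimplies$ by their Nelson counterparts $\weakrightarrow,\weakcoimplies$ and using the Nelson De Morgan laws $\mathsf{DeM}\!\weakrightarrow$ and $\mathsf{DeM}\!\weakcoimplies$ in place of $\mathsf{DeM}\!\rightarrow$ and $\mathsf{DeM}\coimplies$. First I would drive every formula of $\LGsquareNelson$ to negation normal form: repeatedly applying $\mathsf{neg}$, the De Morgan laws for $\wedge,\vee$, and the equivalences $\neg(\phi\weakrightarrow\chi)\weakleftrightarrow(\phi\wedge\neg\chi)$ and $\neg(\phi\weakcoimplies\chi)\weakleftrightarrow(\neg\phi\vee\chi)$ pushes $\neg$ strictly inward until it sits only on propositional variables. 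Since each rewriting step removes a negation from the front of a connective the procedure terminates, and crucially (unlike the order case) no (co-)implication survives under a negation, so the resulting NNF is a genuine formula of $\LGsquareNelson$ with negations confined to atoms.

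Next I would replace each negated literal $\neg p$ occurring in the NNF by a fresh propositional variable $p^{*}$, obtaining a negation-free formula $\phi^{*}$ in the language of $\HbiG$. By the analogue of \cite[Lemma~12]{Wansing2008} for Nelson's logic with co-implication $\mathsf{I}_1\mathsf{C}_1$, a $\GsquareNelson$ model falsifies $\phi$ exactly when the bi-G\"odel model obtained over the same frame by setting the valuation of $p^{*}$ to the negative valuation of $p$ falsifies $\phi^{*}$. Because the $\GsquareNelson$ entailment of definition~\ref{def:G2entailments} tracks only the first (positive-support) coordinate --- which is precisely the positive-support Kripke entailment of definition~\ref{def:KripkeG2entailments} --- this translation preserves non-entailment in both directions. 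Completeness of $\HbiG$ with respect to linearly ordered bi-G\"odel frames then yields a linear refuting model for $\phi^{*}$ whenever $\Gamma\not\models\phi$, and unravelling the substitution turns it into a $\GsquareNelson$ model refuting $\Gamma\models\phi$; the converse soundness direction is routine.

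I expect the main obstacle to be verifying that the positive/negative support reduction behaves correctly for the Nelson connectives, whose negative-support clauses are local rather than quantified over successors (cf.~definition~\ref{def:GN4kripkesemantics}). Concretely, one must check that under the substitution $\neg p\mapsto p^{*}$ the two-dimensional semantics of $\weakrightarrow$ and $\weakcoimplies$ collapses, on the positive coordinate, to the ordinary bi-intuitionistic semantics for $\Grightarrow$ and $\Gcoimplies$ read off $\phi^{*}$ --- this is exactly where the cited lemma does the work, and the remaining bookkeeping is routine. The restriction that $\mathsf{nec}$ be applied only to $\HGsquareNelson$ theorems is inherited verbatim from the bi-G\"odel case and causes no extra difficulty.
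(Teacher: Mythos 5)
Your proposal is correct and follows essentially the same route as the paper: the paper proves the order case (theorem~\ref{theorem:HG2orderKripkecompleteness}) by exactly this reduction---negation normal form via the De Morgan axioms, substitution of fresh variables $p^*$ for negated literals, Wansing's Lemma~12 to move between $\Gsquare$ models and $\HB$ models over the same frame, and completeness of $\HbiG$ for linear frames---and then simply declares that the Nelson case ``can be proved in the same manner,'' which is precisely what you carry out. Your extra attention to the Nelson-specific points (that the local falsity clauses of $\weakrightarrow$ and $\weakcoimplies$ make the NNF rewriting collapse correctly on the positive coordinate, and that the $\GsquareNelson$ entailment tracks only that coordinate) supplies details the paper leaves implicit.
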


It now remains to show that every $\Gsquare$ valuations $e_1$ and $e_2$ on $[0,1]^{\Join}$ can be faithfully transformed into valuations on some linear model and vice versa.
\begin{definition}[Model counterpart of a~$\Gsquare$ valuation]\label{def:algebraicallyinducedmodel} Let $\mathbf{v}$ be a~$\Gsquare$ valuation on $[0,1]^{\Join}$. A~model $\mathfrak{M}_\mathbf{v}=\langle\mathbb{Q},\leq_\mathbf{v},v^+_\mathbf{v},v^-_\mathbf{v}\rangle$ is a~\emph{counterpart} of $\mathbf{v}$ if for its valuations $v^+_\mathbf{v}$ and $v^-_\mathbf{v}$ it holds that:
\begin{align*}
v^+_\mathbf{v}(p)=\mathbb{Q}&\text{ iff }\mathbf{v}_1(p)=1&v^-_\mathbf{v}(p)=\mathbb{Q}&\text{ iff }\mathbf{v}_2(p)=1\\
v^+_\mathbf{v}(p)=\varnothing&\text{ iff }\mathbf{v}_1(p)=0&v^-_\mathbf{v}(p)=\varnothing&\text{ iff }\mathbf{v}_2(p)=0\\
v^+_\mathbf{v}(p)\subseteq v^+_\mathbf{v}(q)&\text{ iff }\mathbf{v}_1(p)\leqslant \mathbf{v}_1(q)&v^-_\mathbf{v}(p)\subseteq v^-_\mathbf{v}(q)&\text{ iff }\mathbf{v}_2(p)\leqslant\mathbf{v}_2(q)\\
v^-_\mathbf{v}(p)\subseteq v^+_\mathbf{v}(q)&\text{ iff }\mathbf{v}_2(p)\leqslant\mathbf{v}_1(q)&v^+_\mathbf{v}(p)\subseteq v^-_\mathbf{v}(q)&\text{ iff }\mathbf{v}_1(p)\leqslant\mathbf{v}_2(q)
\end{align*}
\end{definition}
\begin{lemma}\label{lemma:algebratomodel}
Let $\phi,\phi'\in\LGsquareorder\cup\LGsquareNelson$, $\mathbf{v}$ be a~valuation on $[0,1]^{\Join}$, and $\mathfrak{M}_\mathbf{v}$ be a~counterpart of $\mathbf{v}$. Then it holds that
\begin{align*}
\mathbf{v}_1(\phi)=1&\text{ iff }\mathfrak{M}_\mathbf{v}\vDash^+\phi\\
\mathbf{v}_2(\phi)=1&\text{ iff }\mathfrak{M}_\mathbf{v}\vDash^-\phi\\
\mathbf{v}_1(\phi)\leqslant\mathbf{v}_1(\phi')&\text{ iff } v_\mathbf{v}^+(\phi)\subseteq v_\mathbf{v}^+(\phi')\\
\mathbf{v}_2(\phi)\leqslant\mathbf{v}_2(\phi')&\text{ iff } v_\mathbf{v}^-(\phi)\subseteq v_\mathbf{v}^-(\phi')\\
\mathbf{v}_1(\phi)\leqslant\mathbf{v}_2(\phi')&\text{ iff } v_\mathbf{v}^+(\phi)\subseteq v_\mathbf{v}^-(\phi')\\
\mathbf{v}_2(\phi)\leqslant\mathbf{v}_1(\phi')&\text{ iff } v_\mathbf{v}^-(\phi)\subseteq v_\mathbf{v}^+(\phi')
\end{align*}
\end{lemma}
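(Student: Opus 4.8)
The plan is to reduce all six equivalences to a single \emph{representation} claim and prove that claim by induction on the structure of the formula. The organising observation is that, because the frame $\langle\mathbb{Q},\leq_\mathbf{v}\rangle$ is linear and the valuations $v^+_\mathbf{v},v^-_\mathbf{v}$ are persistent (Definition~\ref{def:GN4kripkesemantics}), every set $v^+_\mathbf{v}(\psi)$ and $v^-_\mathbf{v}(\psi)$ is an upward-closed subset of $\mathbb{Q}$, and such up-sets are totally ordered by $\subseteq$. I would therefore reformulate the lemma as the statement that the counterpart conditions of Definition~\ref{def:algebraicallyinducedmodel} extend to an order-embedding $\iota$ from the finite set of values taken by $\mathbf{v}_1,\mathbf{v}_2$ on the subformulas into the up-sets of $\mathbb{Q}$, with $\iota(0)=\varnothing$ and $\iota(1)=\mathbb{Q}$, such that $v^+_\mathbf{v}(\psi)=\iota(\mathbf{v}_1(\psi))$ and $v^-_\mathbf{v}(\psi)=\iota(\mathbf{v}_2(\psi))$ for every subformula $\psi$. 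Since $\iota$ both preserves and reflects order, the four inclusion clauses (3)--(6) follow at once, and the extremal values $\iota(0)=\varnothing$, $\iota(1)=\mathbb{Q}$ give the first two clauses, reading $\mathfrak{M}_\mathbf{v}\vDash^+\phi$ as $v^+_\mathbf{v}(\phi)=\mathbb{Q}$ and dually for $\vDash^-$.

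For the base case $\psi=p$ the representation is exactly the defining conditions of a counterpart. For $\wedge$ and $\vee$ the positive-support clauses give $v^+_\mathbf{v}(\phi_1\wedge\phi_2)=v^+_\mathbf{v}(\phi_1)\cap v^+_\mathbf{v}(\phi_2)$ and dually for $\vee$; as the two up-sets are $\subseteq$-comparable, intersection and union select the smaller and the larger, matching $\min$ and $\max$ in Definition~\ref{def:semantics:G2}. For the order connectives $\rightarrow$ and $\coimplies$, the positive-support clauses are precisely the Heyting implication and its dual computed in the algebra of up-sets of a linear order, which on comparable up-sets return $\mathbb{Q}$ (resp.\ the consequent) and $\varnothing$ (resp.\ the antecedent)---exactly the behaviour of $\Grightarrow$ and $\Gcoimplies$, so the operations stay within the range of $\iota$ and the correspondence propagates. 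Negation is carried by $v^+_\mathbf{v}(\neg\phi)=v^-_\mathbf{v}(\phi)$ together with $\mathbf{v}_1(\neg\phi)=\mathbf{v}_2(\phi)$: it swaps the two dimensions, which is why the positive, negative, and cross clauses must be propagated together rather than separately. Throughout, the one property to keep verifying is that the finite set of subformula values is closed under the relevant Gödel operations, so that $\iota$ stays well defined and order-reflecting at each step.

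The main obstacle I anticipate is the negative-support (falsity) behaviour of the implications, and especially of the Nelson connectives $\weakrightarrow,\weakcoimplies$: their falsity clauses in Definition~\ref{def:GN4kripkesemantics} do not reduce to one set operation on negative supports but mix $\vDash^+$ and $\vDash^-$ at the same point (e.g.\ $s\vDash^-\phi_1\weakrightarrow\phi_2$ iff $s\vDash^+\phi_1$ and $s\vDash^-\phi_2$), matching the mixed algebraic clauses $e_2(\phi\weakrightarrow\phi')=e_1(\phi')\wedge_\mathsf{G}e_2(\phi)$ and $e_2(\phi\weakcoimplies\phi')=e_2(\phi)\vee_\mathsf{G}e_1(\phi')$. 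Checking these forces a comparison of a positive support with a negative support, i.e.\ exactly the cross-equivalences $\mathbf{v}_1(\phi)\leq\mathbf{v}_2(\phi')\Leftrightarrow v^+_\mathbf{v}(\phi)\subseteq v^-_\mathbf{v}(\phi')$ and its mirror, which is what makes the six statements genuinely interdependent. I would treat $\Gsquareorder$ first, where the falsity clauses of $\rightarrow,\coimplies$ are order-duals of the truth clauses, so the negative dimension behaves like a second copy of the linear Gödel correspondence already exploited through the negation-normal-form reduction of Theorem~\ref{theorem:HG2orderKripkecompleteness} and Wansing's Lemma~12 in~\cite{Wansing2008}; I would then re-run the argument for $\GsquareNelson$, where only the mixed clauses above require extra bookkeeping.
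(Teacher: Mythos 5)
Your proposal is correct, and the connective-by-connective facts it verifies are exactly the ones the paper verifies; what differs is the decomposition. The paper proves all six equivalences simultaneously by induction on $\phi$: the inductive hypothesis is the whole six-clause package for the subformulas, including the comparison clauses against an \emph{arbitrary} right-hand formula $\phi'$, and each connective is settled by explicit case analysis (the printed proof details only $\psi\weakrightarrow\psi'$, whose falsity clause mixes $\vDash^+$ and $\vDash^-$ --- precisely the interdependence you point out). You instead isolate a unary invariant: an order-embedding $\iota$ from the set of subformula values (closed under the G\"{o}del/twist operations by their conservativity, since each operation returns one of its arguments or $0$ or $1$) into the up-sets of $\mathbb{Q}$, with $v^+_\mathbf{v}(\psi)=\iota(\mathbf{v}_1(\psi))$ and $v^-_\mathbf{v}(\psi)=\iota(\mathbf{v}_2(\psi))$, and prove that by induction. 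This buys a genuinely cleaner argument: clauses (3)--(6), and in particular the cross-comparisons between positive and negative supports, follow formally from order-preservation and order-reflection of $\iota$, instead of being carried through the induction with an arbitrary $\phi'$ as in the paper, which is the most delicate bookkeeping in the paper's version. The costs are the small checks you implicitly rely on: that $\iota$ is well defined, i.e.\ equal values receive equal supports (this follows from the biconditional form of Definition~\ref{def:algebraicallyinducedmodel}), and that persistence holds for compound formulas, not just atoms, so all supports remain up-sets and hence $\subseteq$-comparable. One alignment warning: when matching the Nelson falsity clauses, work from the frame clauses of Definition~\ref{def:GN4kripkesemantics} as you quote them ($s\vDash^-\phi_1\weakrightarrow\phi_2$ iff $s\vDash^+\phi_1$ and $s\vDash^-\phi_2$), not from the algebraic clause for $e_2(\phi\weakrightarrow\phi')$ as printed in Definition~\ref{def:semantics:G2}, which interchanges antecedent and consequent relative to the frame clause; the paper's own induction step uses the frame-side reading, so your proof should too.
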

\begin{proof}
We proceed by induction on $\phi$. Then the basis cases of variables and constants hold by construction.

For the induction steps, we consider only the case of $\phi=\psi\weakrightarrow\psi'$. The other ones are straightforward or can be obtained in a~similar manner. In the following, we let $v^\circ_\mathbf{v}$ to stand for the counterpart of $\mathbf{v}_j$.
\begin{align*}
\mathbf{v}_1(\psi\weakrightarrow\psi')=1&\text{ iff }\mathbf{v}_1(\psi)\leqslant\mathbf{v}_1(\psi')\\
&\text{ iff } v^+_\mathbf{v}(\psi)\subseteq v^+_\mathbf{v}(\psi')\tag{by IH}\\
&\text{ iff }\mathfrak{M}_\mathbf{v}\vDash^+\psi\weakrightarrow\psi'
\end{align*}
\begin{align*}
\mathbf{v}_2(\psi\weakrightarrow\psi')=1&\text{ iff }\mathbf{v}_1(\psi)=1\text{ and }e_2(\psi')=1\\
&\text{ iff }\mathfrak{M}_\mathbf{v}\vDash^+\psi\text{ and }\mathfrak{M}_\mathbf{v}\vDash^-\psi'\tag{by IH}\\
&\text{ iff }\mathfrak{M}_\mathbf{v}\vDash^-\psi\weakrightarrow\psi'
\end{align*}
\begin{align*}
\mathbf{v}_1(\psi\weakrightarrow\psi')\leqslant\mathbf{v}_j(\phi')
&\text{ iff }
\left[\begin{matrix}\mathbf{v}_1(\psi)\leqslant\mathbf{v}_1(\psi')
\\\text{and}\\
\mathbf{v}_j(\phi')\geqslant\mathbf{v}_2(\mathbf{0})\end{matrix}\right]
\text{ or }
\left[\begin{matrix}
\mathbf{v}_1(\psi)>\mathbf{v}_1(\psi')\\\text{and}\\
\mathbf{v}_1(\psi')\leqslant\mathbf{v}_j(\phi')
\end{matrix}\right]
\\
&\text{ iff }\left[\begin{matrix}v^+_\mathbf{v}(\psi)\subseteq v^+_\mathbf{v}(\psi')\\\text{and}\\v^\circ_\mathbf{v}(\phi')\supseteq v^-_\mathbf{v}(\mathbf{0})\end{matrix}\right]\text{ or }\left[\begin{matrix}v^+_\mathbf{v}(\psi)\supsetneq v^+_\mathbf{v}(\psi')\\\text{and}\\v^+_\mathbf{v}(\psi')\subseteq v^\circ_\mathbf{v}(\phi')\end{matrix}\right]\tag{by IH}\\
&\text{ iff } v^+_\mathbf{v}(\psi\weakrightarrow\psi')\subseteq v^\circ_\mathbf{v}(\phi')
\end{align*}
\begin{align*}
\mathbf{v}_2(\psi\weakrightarrow\psi')\leqslant\mathbf{v}_j(\phi')&\text{ iff }\mathbf{v}_1(\psi)\leqslant\mathbf{v}_j(\phi')\text{ or }\mathbf{v}_2(\psi')\leqslant\mathbf{v}_j(\phi')\\
&\text{ iff } v^+_\mathbf{v}(\psi)\subseteq v^\circ_\mathbf{v}(\phi')\text{ or }v^-_\mathbf{v}(\psi')\subseteq v^\circ_\mathbf{v}(\phi')\tag{by IH}\\
&\text{ iff } v^-_\mathbf{v}(\psi\weakrightarrow\psi')\subseteq v^\circ_\mathbf{v}(\phi')
\end{align*}
\end{proof}
\begin{definition}[Algebraic counterparts]\label{def:modelinducedvaluation}
Let $\mathfrak{M}=\langle W,\preccurlyeq,v^+,v^-\rangle$ be a~$\Gsquare(\weakrightarrow)$ model. We say that algebraic valuations $v^{\mathfrak{M}}_1$ and $v^\mathfrak{M}_2$ are \emph{counterparts} of $\mathfrak{M}$ if it holds that:
\begin{align*}
v^\mathfrak{M}_1(p)=1&\text{ iff }v^+(p)=W&v^\mathfrak{M}_2(p)=1&\text{ iff }v^-(p)=W\\
v^\mathfrak{M}_1(p)=0&\text{ iff }v^+(p)=\varnothing&v^\mathfrak{M}_2(p)=0&\text{ iff }v^-(p)=\varnothing\\
v^\mathfrak{M}_1(p)\leqslant v^\mathfrak{M}_1(q)&\text{ iff }v^+(p)\subseteq v^+(q)&v^\mathfrak{M}_2(p)\!\leqslant\!v^\mathfrak{M}_2(q)&\text{ iff }v^-(p)\subseteq v^-(q)\\
v^\mathfrak{M}_1(p)\!\leqslant\!v^\mathfrak{M}_2(q)&\text{ iff }v^+(p)\!\subseteq\!v^-(q)&v^\mathfrak{M}_2(p)\leqslant v^\mathfrak{M}_1(q)&\text{ iff }v^-(p)\!\subseteq\!v^+(q)
\end{align*}
\end{definition}
\begin{lemma}\label{lemma:modeltoalgebra}
Let $\phi,\phi'\in\LGsquareorder\cup\LGsquareNelson$. Then, for any $\Gsquare$-model $\mathfrak{M}=\langle\mathbb{F},v^+,v^-\rangle$ and any valuations $v^\mathfrak{M}_1$ and $v^\mathfrak{M}_2$ induced by $\mathfrak{M}$, it holds that:
\begin{align*}
\mathfrak{M}\vDash^+\phi&\text{ iff } v^{\mathfrak{M}}_1(\phi)=1\\
\mathfrak{M}\vDash^-\phi&\text{ iff } v^{\mathfrak{M}}_2(\phi)=1\\
v^+(\phi)\subseteq v^+(\phi')&\text{ iff } e_1^{\mathfrak{M}}(\phi)\leqslant e_1^{\mathfrak{M}}(\phi')\\
v^-(\phi)\subseteq v^-(\phi')&\text{ iff } e_2^{\mathfrak{M}}(\phi)\leqslant e_2^{\mathfrak{M}}(\phi')\\
v^+(\phi)\subseteq v^-(\phi')&\text{ iff } e_1^{\mathfrak{M}}(\phi)\leqslant e_2^{\mathfrak{M}}(\phi')\\
v^-(\phi)\subseteq v^+(\phi')&\text{ iff } e_2^{\mathfrak{M}}(\phi)\leqslant e_1^{\mathfrak{M}}(\phi')\\
\end{align*}
\end{lemma}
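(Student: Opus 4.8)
The plan is to prove all six equivalences simultaneously by induction on the complexity of $\phi$ (and, for the four inclusion statements, on the pair $\phi,\phi'$), mirroring the induction of Lemma~\ref{lemma:algebratomodel} but reading every step from the model side to the algebra side. First I would record that the clauses of Definition~\ref{def:modelinducedvaluation} are jointly satisfiable at all: since $\preccurlyeq$ is linear and $v^+,v^-$ are persistent, the family $\{v^+(p),v^-(p):p\in\Prop\}$ is a chain under $\subseteq$, and any chain of subsets of $W$ containing $\varnothing$ and $W$ order-embeds into $[0,1]$ with $\varnothing\mapsto 0$ and $W\mapsto 1$; this is exactly the data encoded by $v_1^\mathfrak{M},v_2^\mathfrak{M}$, and it settles the base case of variables and constants outright.

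For the inductive step I would dispose of $\neg,\wedge,\vee$ first. The negation clause swaps positive and negative support on the model side and swaps the two coordinates on the algebra side (Definition~\ref{def:semantics:G2} and Definition~\ref{def:GN4kripkesemantics}), so all six statements for $\neg\psi$ follow from the six for $\psi$ with the subscripts $1,2$ and the superscripts $+,-$ interchanged. For $\wedge$ and $\vee$ the support sets are computed by $\cap$ and $\cup$ while the algebraic values are computed by $\min$ and $\max$; since $v_1^\mathfrak{M},v_2^\mathfrak{M}$ both preserve and reflect inclusions by the induction hypothesis, the two presentations agree, which is a routine verification.

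The main obstacle is the (co-)implication cases, where the Kripke clauses are quantified over $\succcurlyeq$ and $\preccurlyeq$ whereas the algebra uses the case split defining $\Grightarrow$ and $\Gcoimplies$. Here linearity and persistence are essential: for $\psi\rightarrow\psi'$, persistence makes $v^+(\psi\rightarrow\psi')$ upward closed and linearity collapses the clause $\forall s'\succcurlyeq s$ into the single inclusion $v^+(\psi)\subseteq v^+(\psi')$, so $\mathfrak{M}\vDash^+\psi\rightarrow\psi'$ iff $v^+(\psi)\subseteq v^+(\psi')$, which by the induction hypothesis is $v_1^\mathfrak{M}(\psi)\leq v_1^\mathfrak{M}(\psi')$, i.e.\ $v_1^\mathfrak{M}(\psi\rightarrow\psi')=1$. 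The truly delicate steps are the inclusion equivalences such as $v_1^\mathfrak{M}(\psi\rightarrow\psi')\leq v_j^\mathfrak{M}(\phi')$: one must reproduce the two-branch analysis (antecedent below versus above the consequent) displayed for $\weakrightarrow$ in Lemma~\ref{lemma:algebratomodel}, read in reverse, and check that each branch matches a set-inclusion between the relevant upward-closed sets. Once the $\rightarrow$ and $\coimplies$ clauses are verified this way, the Nelson connectives $\weakrightarrow,\weakcoimplies$ are handled identically, their negative-support clauses being the simpler non-quantified ones, and the lemma follows.
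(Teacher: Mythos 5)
Your proof is correct and takes essentially the same approach as the paper, whose proof of this lemma is literally ``analogously to Lemma~\ref{lemma:algebratomodel}'': the same structural induction with base case by construction, routine $\neg,\wedge,\vee$ steps, and the two-branch order analysis for the (co-)implications, which you carry out read in the reverse direction. (One minor caveat: your side remark that \emph{any} chain of subsets of $W$ containing $\varnothing$ and $W$ order-embeds into $[0,1]$ is only true here because $\Prop$ is countable, but existence of the counterpart valuations is not part of the lemma's claim in any case.)
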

\begin{proof}
Analogously to Lemma~\ref{lemma:algebratomodel}.
\end{proof}

We can now finally prove Theorem~\ref{theorem:HG2algebraiccompleteness}.
\begin{proof}[Proof of Theorem~\ref{theorem:HG2algebraiccompleteness}]
The soundness part can be easily proved by a~routine check of axioms and rules. Now, assume that $\Gamma\not\models_{\Gsquareorder}\phi$. Then, by~\cite[Proposition~5]{BilkovaFrittellaKozhemiachenko2021TABLEAUX}, there is a~$\Gsquare$ valuation s.t.\ $e_1[\Gamma]>e_1(\phi)$. Hence, by Lemma~\ref{lemma:algebratomodel}, there is a~model $\mathfrak{M}$ and $w\in\mathfrak{M}$ s.t.\ $\mathfrak{M},w\vDash^+[\Gamma]$ but $\mathfrak{M},w\nvDash^+\phi$. Thus, by Theorem~\ref{theorem:HG2orderKripkecompleteness}, we obtain that $\phi$ is not $\HGsquareorder$ derivable from $\Gamma$.

The case of $\HGsquareNelson$ can be tackled in a~similar manner.
\end{proof}
\begin{corollary}\label{cor:strongtriangleadmissibility}
The following rules are admissible:
\begin{align*}
\dfrac{\HGsquareorder\vdash\alpha}{\HGsquareorder\vdash\triangleorder\alpha}&&\dfrac{\HGsquareNelson\vdash\beta}{\HGsquareNelson\vdash\triangleNelson\beta}
\end{align*}
\end{corollary}
\end{document}